\newtheorem{thm}{Theorem}[section]
\newtheorem{cor}[thm]{Corollary}
\newtheorem{remark}[thm]{Remark}
\newtheorem{lemma}[thm]{Lemma}
\newtheorem{prop}[thm]{Proposition}
\newtheorem{defn}[thm]{Definition}
\newtheorem{prob}[thm]{Problem}
\newcommand{\bb}[1]{\mathbb{#1}}
\newcommand{\cl}[1]{\mathcal{#1}}
\begin{document}

\title[WEP]{Weak Expectations and the Injective Envelope}

\author[V.~I.~Paulsen]{Vern I.~Paulsen}
\address{Department of Mathematics, University of Houston,
Houston,
Texas
77204-3476,
U.S.A.}
\email{vern@math.uh.edu}

\date{\today}

\thanks{Research supported in part by a NSF grant DMS-0600191}
\keywords{weak expectation, injective}
\subjclass[2000]{Primary 46L15; Secondary 47L25}

\begin{abstract}
Given a unital C*-subalgebra $\cl A \subseteq B(\cl H),$ we study the set of
all possible images of the injective envelope $I(\cl A)$ of $\cl A$ that are contained
in $B(\cl H)$ and their position relative to the double commutant of
the algebra in order to develop more information about the existence
or non-existence of weak 
expectations. We also introduce a new category, such that the
injective envelope of $\cl A$ in the new category is always contained
in the double commutant and study the relationship between these two
injective envelopes and the existence of weak expectations.

\end{abstract}

\maketitle

\section{Introduction and Preliminaries}

A unital $C^*$-subalgebra, $\cl A \subseteq B(\cl H)$ of the bounded 
linear operators on 
a Hilbert space, is said to have a {\em weak expectation} 
provided that
there is a completely positive map from $B(\cl H)$ the double
commutant of $\cl A,$ denoted $\cl A'',$
that is the identity on $\cl A.$
A unital $C^*$-algebra $\cl A$ is said to have the {\em weak 
expectation property(WEP)} provided that for every faithful 
*-representation $\pi : \cl A \rightarrow B(\cl H)$ of $\cl A$ 
onto a Hilbert space $\cl H$, the $C^*$-subalgebra $\pi(\cl A)$ 
has a weak expectation.
If we let $\pi_u$ denote the universal representation of $\cl 
A$, so that the double commutant $\pi_u(A)''$ is identified with the double dual, 
$A^{**}$, then it is known that $\cl A$ has the WEP if and only 
if this representation has a weak expectation.

Blackadar\cite{Bl} observed that $\cl A$ has a weak 
expectation if and only if 
$\cl A''$ contains an operator system that is
completely isometrically isomorphic to $I(\cl A)$,
the injective envelope of $\cl A$, via a map that fixes $\cl A.$
Thus, the WEP is equivalent to
the existence of a copy of $I(\cl A)$ inside $\cl A^{**}$.

However, in general, $B(\cl H)$ will contain many operator 
systems that contain $\cl A$ and are completely
isometrically isomorphic to $I(\cl A).$  For a weak expectation to 
exist we only need 
one of these copies
of the injective envelope to be contained in the double 
commutant. In general, a C*-subalgebra $\cl A \subseteq B(\cl H),$
which has a weak expectation, will also have some copies of $I(\cl A)$ that
are not contained in $\cl A''.$  We construct such an algebra below.

For this reason, given a unital C*-subalgebra $\cl A \subseteq B(\cl H),$ we
are led to a more detailed study of the collection of all 
possible ''copies" of $I(\cl A)$ that lie inside $B(\cl H)$, the 
relationships between these various copies of the injective envelopes and the collection of projections onto these copies.  One object that will play a role in our study is the intersection of all possible copies of $I(\cl A).$

Another tool that we shall use is a new type of ''envelope" of $\cl A$ that is always a subset of $\cl A'',$ but which is, generally, only injective in a sense relative to $\cl A''.$ It turns out that $\cl A$ has an weak expectation if and only if this new envelope is injective.

In section 2, we further develop some of Hamana's ideas.  We introduce and study this new type of envelope and
simultaneously obtain additional information about the set of all
projections onto copies of the injective envelope. This is the set that Hamana\cite{H1}
calls the {\em minimal $\cl A$-projections.} Section 3 applies these
ideas to the study of weak expectations. Section 4 is devoted to
developing the properties of the set that is the intersection of all
copies of the injective envelope. We prove that this set is
simultaneously a reflexive cover of $\cl A$ and a new type of order
completion of $\cl A.$ We compute this set for a few examples.

We close this section by justifying a few of the comments above. First, since the entire motivation for this study relies on Blackadar's result \cite{Bl}, we provide an independent argument which also serves as an introduction to many of Hamana's ideas. 

To obtain Blackadar's result, first assume that $\cl A \subseteq B(\cl H)$ possesses a weak expectation.  Note that
since $B(\cl H)$ is injective, the identity map on $\cl A$ extends to
a map of $I(\cl A)$ into $B(\cl H).$ Composing this latter map with
the weak expectation yields a completely positive map
of $I(\cl A)$ into $\cl A''$ that 
fixes $\cl A$. The map of $I(\cl A)$ into $\cl A''$ must be a complete isometry on the injective 
envelope, by the fact \cite{H1} that the
injective envelope is an essential extension of $\cl A$. Conversely, if $I(\cl A)$ can be embedded completely isometrically isomorphically into $\cl A'',$ then since $I(\cl A)$ is injective, we may extend the identity map on $\cl A$ to a completely positive map of $B(\cl H)$ to $I(\cl A).$ Composing this extension with the inclusion of  $I(\cl A)$ into $\cl A''$ yields the desired weak expectation.

Next we would like to point out that the fact that the image of one representation $\pi: \cl A \to B(\cl H)$ has a weak expectation is not enough to guarantee that $\cl A$ has the weak expectation property. In fact, every C*-algebra has at least one representation that has a weak expectation. The double commutant of the reduced atomic representation is always injective 
and hence this representation possesses a weak
expectation. 

Next we give an example to show that it is possible for a C*-algebra to have a weak expectation, while some copies of the injective envelope are not contained inside the double commutant of the algebra.
To this end, let $\cl A$ be a C*-algebra with the WEP that is not injective and take the universal representation of the C*-algebra, $I(\cl A).$  Then $\pi_u(I(\cl A))$ is a copy of the injective envelope of $\pi_u(\cl A),$ but we claim that this copy of of the injective envelope is not contained in $\pi_u(\cl A)''.$  If not, then we would have that $\pi_u(I(\cl
A)) \subseteq \pi_u(\cl A)''.$ But this implies that $I(\cl A)^{**} \simeq \pi_u(I(\cl
A))'' \subseteq \pi_u(\cl A)'' \simeq \cl A^{**}$ and this inclusion of $I(\cl A)^{**} \subseteq \cl A^{**}$ is weak*-continuous and the identity on $\cl A.$ From this it follows that $\cl A = I(\cl A),$ contradicting the choice of $\cl A.$ Since $\cl A$ had the WEP, $\pi_u(\cl A)$ is a
C*-algebra with a weak expectation and so some copy of its injective
envelope is contained in $\pi_u(\cl A)'',$ but $\pi_u(I(\cl A)) \simeq
I(\pi_u(\cl A))$ is a copy of its injective envelope that is not
contained in the double commutant.

Finally, although our main interest is in the case of C*-algebras, many of the ideas extend to the case of operator spaces.

The definition of the injective envelope of a C*-algebra has been extended to general operator spaces by Ruan\cite{Ru} and has been shown to enjoy similar properties to those proven by Hamana, namely, that it is a rigid, essential and injective extension.
Pisier\cite{Pi} defined an operator space $V$ to have the WEP provided
that the identity on $V$ extends to a completely contractive map of
Ruan's injective envelope $I(V)$ into $V^{**}.$ Thus, a finite
dimensional operator space has the WEP if and only if it is
injective. We prove below that Pisier's definition, as in the case of C*-algebras, is equivalent to one involving weak expectations.

\begin{defn}
Let $V \subseteq B(\cl H)$ be an operator space, we say that $V$ has a weak expectation, if
there is a completely contractive map from $B(\cl H)$ into the
weak*-closure of $V,$ that is the identity on $V.$ 
We define an operator space $V$ to have the weak expectation property(WEP) provided that there 
exists a
completely isometric embedding of the injective envelope of $V$, 
$I(V)$ into $V^{**}$
that is the identity on $V$.
\end{defn}

The equivalence of the WEP to every inclusion possessing a weak
expectation is a little more subtle for operator spaces, so we prove this below.

\begin{prop} Let $V$ be an operator space. Then $V$ has the WEP if and
  only if for every Hilbert space, $\cl H,$ and for every complete
  isometry, $\varphi:V \to B(\cl H),$ the subspace $\varphi(V)
  \subseteq B(\cl H)$ has a weak expectation.
\end{prop}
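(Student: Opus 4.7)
The plan is to deduce both implications from the injectivity of $I(V)$ and of $B(\cl H)$, the essentiality of $I(V)$ as an extension of $V$, and the standard fact that any completely contractive map $\varphi : V \to B(\cl H)$ admits a unique weak*-continuous completely contractive extension $\tilde\varphi: V^{**} \to B(\cl H)$, whose image lies in $\overline{\varphi(V)}^{w*}$ (since $V$ is weak*-dense in $V^{**}$ and $B(\cl H)$ is a dual operator space).

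\emph{Forward direction.} Suppose $V$ has the WEP, so there is a completely isometric inclusion $I(V) \subseteq V^{**}$ fixing $V$. Given a complete isometry $\varphi: V \to B(\cl H)$, first use injectivity of $B(\cl H)$ to extend $\varphi$ to $\hat\varphi: I(V) \to B(\cl H)$; this is a complete isometry by essentiality. Then use injectivity of $I(V)$ to extend $\hat\varphi^{-1}: \hat\varphi(I(V)) \to I(V)$ to a completely contractive $E: B(\cl H) \to I(V)$. Finally, combine $E$ with the WEP inclusion $I(V) \hookrightarrow V^{**}$ and with $\tilde\varphi: V^{**} \to \overline{\varphi(V)}^{w*}$, obtaining a completely contractive map $B(\cl H) \to \overline{\varphi(V)}^{w*}$ that sends $\varphi(v) \mapsto v \mapsto v \mapsto \varphi(v)$; this is the desired weak expectation for $\varphi(V)$.

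\emph{Converse direction.} Assume every complete isometric embedding of $V$ into some $B(\cl H)$ admits a weak expectation. The key step is to choose an ambient $B(\cl K)$ in which the weak* closure of (the image of) $V$ is canonically $V^{**}$: pick any weak*-continuous completely isometric embedding $\iota: V^{**} \to B(\cl K)$, which exists by the standard representation theorem for dual operator spaces. Weak*-density of $V$ in $V^{**}$ together with weak*-continuity of $\iota$ gives $\overline{\iota(V)}^{w*} = \iota(V^{**})$. Apply the hypothesis to $\iota|_V$ to obtain a completely contractive weak expectation $E: B(\cl K) \to \iota(V^{**})$ fixing $\iota(V)$. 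Extend $\iota|_V$ to $\hat\iota: I(V) \to B(\cl K)$ using injectivity of $B(\cl K)$ (a complete isometry by essentiality), and consider the composition $\iota^{-1} \circ E \circ \hat\iota: I(V) \to V^{**}$. It is completely contractive and fixes $V$, so a second application of essentiality forces it to be a complete isometry, providing the WEP embedding.

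\emph{Main obstacle.} The harder half is the converse: the WEP is stated abstractly in terms of $V^{**}$, whereas weak expectations live inside concrete $B(\cl H)$'s. Bridging this gap requires selecting an ambient Hilbert space in which the weak* closure of $V$ is already isomorphic to $V^{**}$, which is why one invokes the existence of a weak*-continuous completely isometric representation of the dual operator space $V^{**}$. Once that identification is in hand, the rigidity of the injective envelope supplied by essentiality turns both directions into straightforward bookkeeping with injective extensions.
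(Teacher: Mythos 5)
Your proof is correct and follows essentially the same route as the paper's: both directions rest on the weak*-continuous extension of a complete contraction to $V^{**}$, the weak*-representation theorem for dual operator spaces, and the injectivity and essentiality of $I(V)$. The only cosmetic difference is that in the forward direction you assemble the weak expectation explicitly as the composite $B(\cl H)\to I(V)\to V^{**}\to \overline{\varphi(V)}^{w*}$, whereas the paper stops at exhibiting a completely isometric copy of $I(V)$ inside the weak*-closure and leaves the final projection step implicit.
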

\begin{proof} Assume that $V$ has the WEP and let $\varphi:V \to B(\cl
  H)$ be a complete isometry. Let $W$ denote the weak*-closure of
  $\varphi(V)$, so that by \cite[Lemma~1.4.6 and 1.4.8]{BLeM} $W$ has
  a pre-dual and $\varphi$ extends to a complete contraction of
  $V^{**}$ into $W$. Composing this map with the embedding of $I(V)$
  into $V^{**}$ yields a completely contractive mapping of $I(V)$ into
  $W$ that is restricts to be a complete isometry on $V$. Since $I(V)$
  is an essential extension of $V$, this map must also be a complete
  isometry on $I(V).$

Conversely, assume that every completely isometric embedding of $V$
into $B(\cl H)$ possesses a weak expectation. By
\cite[Lemma~1.4.7]{BLeM}, there exists a weak*-continuous completely
isometric embedding of
$V^{**}$ onto a weak*-closed subspace of $B(\cl H),$ for some $\cl
H$. Taking any weak expectation for this embedding and composing it
with any completely isometric inclusion of $I(V)$ into $B(\cl H),$
yields the desired embedding of $I(V)$ into $V^{**}.$
\end{proof}

We remark that the above proof applied to the case of C*-algebras,
shows the earlier equivalence of the two characterizations of
WEP. 

\section{Minimal Projections and W-injectivity}

In this section we take a closer look at the ideas contained in 
Hamana's constructions \cite{H1, H2} of the injective envelope and prove a 
number of facts that are consequences of these ideas, but that 
seem to have not been remarked earlier.
In order to better understand the weak expectations it is useful to examine the
extent that Hamana's constructions can be carried out in the setting 
where one
has operator spaces $V \subseteq W$ with $W$ a dual operator space
that is not necessarily injective.

\begin{defn} Let $W$ be an operator space. We let $\frak W$ denote the 
category whose
objects are operator subspaces of $W$ and given two operator subspaces 
$X,Y$ of $W$ we let the maps
from $X$ to $Y$, denoted $M(X,Y)$, be the set of completely 
contractive maps from $\phi : W \to W$ 
 such that $\phi(X) \subseteq Y$.
 We define an object $Y$ to be {\bf injective in $\frak W$} or more 
shortly,
 {\bf $W$-injective} provided that whenever $X_1 \subset X_2$ are 
operator subspaces of $W$ and
 $\phi_1 \in M(X_1, Y)$, then there exists $\phi_2 \in M(X_2,Y)$ such 
that the restriction
 of $\phi_2$ to $X_1$ is $\phi_1$.
We say that $X$ and $Y$ are {\bf $W$-isomorphic} provided that there 
exists $\phi \in M(X,Y)$ and $\psi \in M(Y,X)$ such that $\psi \circ 
\phi$ is the identity on $X$ and $\phi \circ \psi$ is the identity on 
$Y$.
 \end{defn}
 
 It is not hard to see that $Y$ is $W$-injective if and only if there 
exists a completely
contractive idempotent from $W$ onto $Y$. So, for example, $W$ is 
always injective
in $\frak W$, even though it need not be an injective operator space 
in the traditional sense.
If $W$ is an injective operator space, in the usual sense, then it 
follows that every $W$-injective
operator space is also injective in the usual sense.
If $X$ and $Y$ are $W$-isomorphic, then they are completely 
isometrically isomorphic, but the converse is not apparent.

We now show that when $W$ is a dual space, then many of Hamana's 
results about minimal
projections and injective envelopes hold and allow one to construct an 
injective envelope in $\frak W$
with analogous properties to the usual injective envelope.

We begin with the relevant definitions. 

\begin{defn} Let $V \subseteq W$ be operator spaces. We call a 
completely
contractive map $\phi: W \to W$ a {\bf $V$-map} provided that $\phi(v) 
=v$ for
every $v \in V$. We say that $X$ is a {\bf
$W$-essential extension of $V$} provided that $V \subseteq X \subseteq 
W$ and
whenever $\phi: W \to W$ is a $V$-map, then there exists a $V$-map 
$\psi$ such that
$\psi \circ \phi(x) = x$ for every $x \in X$.
We say that $X$ is an {\bf $W$-rigid extension of $V$} provided that 
$V \subseteq X \subseteq W$
and the only $V$-map in $M(X,X)$ is the identity map on $X$.
\end{defn}

When $W$ is injective in the usual sense, then $X$ is a 
$W$-rigid(respectively, $W$-essential) extension of $V$ if and only if 
it is a rigid(respectively, essential) extension of $V$ in the usual 
sense, that is, if and only if the only completely contractive map 
from $X$ to itself that fixes $V$ is the identity on $X$(respectively, 
any completely contractive map on $X$ that is a complete isometry on 
$V$ is a complete isometry on $X$).

Given a $V$-map $\phi: W \to W$ it induces a seminorm on $W$ by 
setting $p_{\phi}(w) = \|\phi(w)\|.$
The set of such seminorms is partially ordered by $p_{\phi} \le 
p_{\psi}$ if and only if $p{\phi}(w) \le p_{\psi}(w)$ for every $w \in 
W.$

Now assume that $W$ is a dual Banach space, so that $W$ is endowed 
with a
weak*-topology. Given any chain of such seminorms, then by taking a 
point-weak*-limit point of the
set of maps, we obtain a new $V$-map $\psi$ such that $p_{\psi}$ is a 
lower bound for the
chain. Thus, by Zorn's lemma, there exist minimal such seminorms.

To use the above argument to prove the existence of minimal seminorms, 
one only needs that $M(W,W)$ is endowed with a topology that
makes it a compact set and such that for every $w \in W$ the map, 
$\phi \to \|\phi(w)\|$ is continuous.

\begin{defn} We call an operator space $W$  {\bf admissable} provided 
that for every
operator subspace $V$ and every $V$-map $\phi$ there exists a $V$-map 
$\psi$ such that
$p_{\psi} \le p_{\phi}$ with $p_{\psi}$ a minimal seminorm.
\end{defn}

Thus, by our above remarks every operator space $W$ that has an 
appropriate topology on $M(W,W)$ is
admissable and in particular every operator space that is a dual 
Banach space is admissable.

\begin{thm}\label{thm1} Let $V \subseteq W$ be operator spaces with 
$W$ admissable and let $\phi: W \to W$ 
be a $V$-map such that the seminorm $p_{\phi}$ is minimal among this 
family of seminorms,
then $\phi$ is a completely contractive projection and the range of 
$\phi, \cl R(\phi)$ is a $W$-rigid, $W$-injective
extension of $V$.
\end{thm}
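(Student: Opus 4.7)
The plan is to adapt Hamana's classical construction of the injective envelope to the admissible setting, handling three claims in turn: that $\phi$ is idempotent, that $\mathcal{R}(\phi)$ is $W$-injective, and that $\mathcal{R}(\phi)$ is a $W$-rigid extension of $V$.

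First, for idempotency, I would observe that $\phi \circ \phi$ is again a $V$-map: it is completely contractive as a composition of complete contractions, and it fixes $V$ pointwise since $\phi$ does. The inequality $\|\phi^{(n)} \circ \phi^{(n)}(w)\| \le \|\phi^{(n)}(w)\|$ for every $n$ and every $w \in M_n(W)$ gives $p_{\phi \circ \phi} \le p_\phi$, and minimality of $p_\phi$ forces equality, so $\phi$ is a complete isometry on $\mathcal{R}(\phi)$. To upgrade ``complete isometry on $\mathcal{R}(\phi)$'' to ``identity on $\mathcal{R}(\phi)$,'' I would apply the same minimality step to the convex average $\tfrac{1}{2}(\phi + \phi \circ \phi)$, which is again a $V$-map with seminorm at most $p_\phi$, hence equal; combining this with the matrix-norm structure at every level then forces $\phi \circ \phi = \phi$, following Hamana's original template from \cite{H1, H2}.

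Second, once idempotency is in hand, $\phi$ is a completely contractive projection of $W$ onto $\mathcal{R}(\phi)$, so by the characterization noted immediately after the definition of $W$-injectivity (``$Y$ is $W$-injective if and only if there is a completely contractive idempotent from $W$ onto $Y$''), $\mathcal{R}(\phi)$ is automatically $W$-injective. Third, for $W$-rigidity, let $\psi$ be a $V$-map in $M(\mathcal{R}(\phi), \mathcal{R}(\phi))$. The composition $\psi \circ \phi$ is a $V$-map with $p_{\psi \circ \phi}(w) = \|\psi(\phi(w))\| \le \|\phi(w)\| = p_\phi(w)$, so by minimality $p_{\psi \circ \phi} = p_\phi$, which says $\psi$ is a complete isometry on $\mathcal{R}(\phi)$. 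Applying the convex-combination trick once more, now to the pair $\phi$ and $\phi \circ \psi \circ \phi$, and using that $\phi$ already acts as the identity on $\mathcal{R}(\phi)$, I would conclude $\psi|_{\mathcal{R}(\phi)} = \mathrm{id}$.

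The main obstacle I foresee is precisely the refinement from ``complete isometry on the range'' to ``identity on the range.'' In a generic Banach space the analogous implication would fail, but here the full strength of the minimality hypothesis is that $p_\phi$ is minimal as a matrix seminorm (equality at every level $M_n(W)$), and it is this operator-space rigidity combined with the convex-combination argument that makes the step go through. Once that is secured, the remaining pieces are routine formal manipulations inside the category $\mathfrak{W}$.
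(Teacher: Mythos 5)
Your overall architecture is the right one (it is Hamana's, and it is the paper's), but the two places you yourself flag as "the main obstacle" --- upgrading from norm equalities to actual identities --- are exactly where your sketch does not close. For idempotency, the two-term average $\tfrac12(\phi+\phi\circ\phi)$ is not enough. Writing $d(x)=\phi(x)-\phi^{(2)}(x)=\phi(x-\phi(x))$, the equality $p_{\frac12(\phi+\phi^{(2)})}=p_\phi$ evaluated at $x-\phi(x)$ gives only $\|d(x)\|=\tfrac12\|d(x)+\phi(d(x))\|$, which is consistent with $d(x)\neq 0$ (in $\ell^\infty_2$, for instance, one can have $\|y+z\|=\|y\|+\|z\|$ with $z\neq y$). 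What the paper actually uses is the full Ces\`aro mean $\psi_n=\tfrac1n(\phi+\cdots+\phi^{(n)})$: minimality gives $p_{\psi_n}=p_\phi$ for every $n$, the sum telescopes to $\psi_n(x-\phi(x))=\tfrac1n(\phi(x)-\phi^{(n+1)}(x))$, whose norm is $O(1/n)$, and hence $\|\phi(x-\phi(x))\|=\|\psi_n(x-\phi(x))\|\to 0$. The limit $n\to\infty$ is essential; no single convex combination of finitely many fixed iterates yields the conclusion by itself. You cite "Hamana's template," which is indeed this argument, but the concrete step you wrote down is not it.

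The rigidity step has the same defect, and there the proposed fix fails more visibly. Averaging $\phi$ with $\phi\circ\psi\circ\phi$ and using $\phi|_{\mathcal{R}(\phi)}=\mathrm{id}$ gives only $\|y\|=\tfrac12\|y+\psi(y)\|$ for $y\in\mathcal{R}(\phi)$, i.e.\ $\|y+\psi(y)\|=\|y\|+\|\psi(y)\|$ with $\psi$ a complete isometry on the range; this does not force $\psi(y)=y$ (again $\ell^\infty_2$ gives norm-additive pairs with $z\neq y$). The paper's route is different and you should adopt it: since $p_{\psi\circ\phi}=p_\phi$ is itself a \emph{minimal} $V$-seminorm, the already-established Ces\`aro argument applies to $\psi\circ\phi$ and shows it is idempotent; then for $y\in\mathcal{R}(\phi)$ one computes $\|y-\psi(y)\|=\|\phi(y-\psi\circ\phi(y))\|=\|\psi\circ\phi(y-\psi\circ\phi(y))\|=0$, the middle equality because $p_\phi=p_{\psi\circ\phi}$ and the last because $\psi\circ\phi$ is idempotent. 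So the missing idea in both places is the same: minimality is exploited through unbounded Ces\`aro averaging (to get idempotency of any map realizing the minimal seminorm), and rigidity is then extracted from idempotency of $\psi\circ\phi$ by a kernel computation, not from a norm-additivity identity.
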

\begin{proof}
Set $\phi^{(2)} = \phi \circ \phi$ and inductively, $\phi^{(n+1)} = 
\phi \circ \phi^{(n)}$.
Define $\psi_n = \frac{\phi + \cdots + \phi^{(n)}}{n}$. Since 
$p_{\psi_n} \le p_{\phi}$ we have equality
of these seminorms. But $\psi_n(x - \phi(x)) = \frac{\phi(x) - 
\phi^{(n+1)}(x)}{n}$ which tends to 0
in norm. Hence, $0 = \phi(x - \phi(x))$ and so it follows that $\phi$ 
is idempotent and so the range
of $\phi$ is $W$-injective.

Let $Y = \cl R(\phi)$, it remains to show that $Y$ is a $W$-rigid 
extension of $V$. 
Let $\psi \in M(Y,Y)$ be a
$V$-map. Since $p_{\psi \circ \phi} \le p_{\phi}$ we again have 
equality. Thus, by the above
$\psi \circ \phi$ must be an idempotent map. Let $y \in Y$, then
$$\| y - \psi(y)\| = \|\phi(y - \psi \circ \phi(y))\| = \| \psi \circ 
\phi(y - \psi \circ \phi(y)\|
=0.$$ Hence, $\psi$ is the identity on $Y$ and so $Y$ is a $W$-rigid 
extension of $V$.
\end{proof}

For all of the following results, we assume that $W$ is an admissable 
operator space and that
$V \subset W$.

\begin{lemma}\label{lemma1} Let $W$ be an admissable operator space, 
with $V \subset W$.
Let $Y$ be a $W$-rigid and $W$-injective extension of $V$. Let $E: W 
\to Y$ be a 
$V$-map and let $\psi$ be any $V$-map. If 
$p_{\psi} \le p_E$, then $E \circ \psi = E$ and $ker(\psi) = 
ker(E).$
\end{lemma}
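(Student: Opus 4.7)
The plan is to exploit $p_\psi \le p_E$ to obtain a containment of kernels, and then use $W$-rigidity of $Y$ to upgrade this containment to the equality $E\circ\psi = E$. The key observation that makes this short is that the seminorm inequality $\|\psi(w)\| \le \|E(w)\|$ transports zeros of $E$ to zeros of $\psi$, which turns the problem into something purely structural on the complement of $\ker(E)$, where rigidity takes over.

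First I would verify that $E$ itself is an idempotent projection of $W$ onto $Y$. Since $E$ is a $V$-map with $E(W)\subseteq Y$, we have $E(Y)\subseteq Y$, so $E\in M(Y,Y)$ fixes $V$; by $W$-rigidity of $Y$, its restriction to $Y$ must be $\mathrm{id}_Y$, and hence $E^2=E$ with range exactly $Y$. The same reasoning handles $E\circ\psi$: it is a completely contractive $V$-map whose range sits inside $Y$, so $E\circ\psi\in M(Y,Y)$, and $W$-rigidity forces $(E\circ\psi)|_Y=\mathrm{id}_Y$.

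Next, from $p_\psi \le p_E$ we read off $\|\psi(w)\|\le\|E(w)\|$ for every $w\in W$, which immediately gives $\ker(E)\subseteq\ker(\psi)$. For any $w\in W$, decompose $w=E(w)+(w-E(w))$, where $w-E(w)\in\ker(E)\subseteq\ker(\psi)$; then $\psi(w)=\psi(E(w))$, and applying $E$ together with the previous step (which gives $E\circ\psi$ acting as the identity on $Y\ni E(w)$) yields $E(\psi(w))=E(\psi(E(w)))=E(w)$. This proves $E\circ\psi=E$. For the converse kernel inclusion, $\psi(w)=0$ now implies $E(w)=E(\psi(w))=0$, so $\ker(\psi)\subseteq\ker(E)$, and equality of kernels follows.

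There is no substantial obstacle here; the subtle point is realizing that $W$-rigidity has to be invoked twice—once on $E$ to secure that it is an idempotent with range $Y$, and once on $E\circ\psi$ to control its behaviour on $Y$—after which the seminorm hypothesis does all of the remaining work by eliminating the $\ker(E)$-component of every $w\in W$ under $\psi$.
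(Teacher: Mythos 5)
Your proof is correct and follows essentially the same route as the paper's: rigidity of $Y$ gives that $E$ and $E\circ\psi$ both restrict to the identity on $Y$, the seminorm inequality gives $\ker(E)\subseteq\ker(\psi)$, and the decomposition $w=E(w)+(w-E(w))$ into a $Y$-part and a $\ker(E)$-part yields $E\circ\psi=E$ and then the reverse kernel inclusion. No substantive differences.
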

\begin{proof}
Since $Y$ is a $W$-rigid extension of $V$, we have that $E(y)=y$ for every $y \in Y,$ and hence, $E \circ E= E.$ For $y \in Y$ we have that $E \circ \psi(y) = y$ by 
rigidity. For $k \in ker(E)$ we have that $\|\psi(k)\| \le 
\|E(k)\| = 0 $ and so $ker(E) \subseteq ker(\psi)$. Since every 
element $x \in W$ can be written as $x= y+k$ for $y \in Y$ 
and $k \in ker(E)$, we have that $E \circ \psi(x) = y = 
E(x)$ and the first claim follows.

If $x \in ker(\psi)$, then $E(x) = E \circ \psi(x) = 0$ and so 
$ker(\psi) \subseteq ker(E).$
\end{proof}

\begin{prop}\label{propn1}
Let $Y$ be a $W$-rigid and $W$-injective extension of $V$ and let $E: 
W \to Y$ be a
 $V$-map, then $p_E$ is a minimal $V$-seminorm.
\end{prop}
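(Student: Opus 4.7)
The plan is to observe that Lemma~\ref{lemma1} already does almost all of the work. To establish minimality of $p_E$, I would fix an arbitrary $V$-map $\phi$ with $p_\phi \le p_E$ and prove the reverse inequality $p_E \le p_\phi$, so that in fact $p_\phi = p_E$. This is exactly what minimality of $p_E$ means.

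To obtain the reverse inequality, apply Lemma~\ref{lemma1} with $\phi$ in place of $\psi$: all of its hypotheses are in force, namely that $Y$ is $W$-rigid and $W$-injective, that $E: W \to Y$ is a $V$-map, and that $p_\phi \le p_E$. The lemma then yields the identity $E \circ \phi = E$. Combining this with the (complete) contractivity of $E$ gives
\[
\|E(w)\| \;=\; \|E(\phi(w))\| \;\le\; \|\phi(w)\|
\]
for every $w \in W$, which is precisely the inequality $p_E \le p_\phi$. Together with the standing assumption $p_\phi \le p_E$, this forces $p_\phi = p_E$ and proves minimality.

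There is really no obstacle here: the essential step --- converting the inequality of seminorms $p_\phi \le p_E$ into the operator identity $E \circ \phi = E$ via $W$-rigidity of $Y$ --- has already been extracted into Lemma~\ref{lemma1}, so Proposition~\ref{propn1} follows from that identity by a single application of contractivity. The only point one needs to notice is that Lemma~\ref{lemma1} applies to \emph{any} $V$-map $\phi$ with $p_\phi \le p_E$, not only to idempotent or minimal ones; its statement requires nothing beyond this seminorm domination, so no auxiliary construction (such as invoking admissibility to replace $\phi$ by a minimal dominated map) is needed.
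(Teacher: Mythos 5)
Your proof is correct and is essentially the paper's own argument: the paper phrases it as a contradiction (supposing $\|\phi(x)\| < \|E(x)\|$ for some $x$) but uses exactly the same two ingredients, namely Lemma~\ref{lemma1} to get $E \circ \phi = E$ and then contractivity of $E$ to conclude $\|E(x)\| = \|E(\phi(x))\| \le \|\phi(x)\|$. Your direct formulation (showing $p_\phi \le p_E$ forces $p_\phi = p_E$) is a cosmetic rearrangement of the same reasoning, and your observation that the lemma needs no idempotency or minimality of $\phi$ is accurate.
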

\begin{proof}
Suppose not. Then we may choose an $V$-map $\phi$ such that 
$p_{\phi} \le p_E$ with $\|\phi(x)\| < \|E(x)\|$ for some $x$.
But by the above, $\|E(x)\| = \|E \circ \phi(x)\| \le 
\|\phi(x)\|$, a contradiction. 
\end{proof}

\begin{prop}\label{propn2}
Let $V \subseteq W$ with $W$ admissable and let $Y$ be a $W$-rigid and $W$-injective extension of $V$. Let $E: W 
\to Y$ be a 
$V$-map and let $\phi$ be any $V$-map. Then
$ker(\phi \circ E) = ker(E), E \circ \phi \circ E = E$ and
$\phi \circ E$ and $E \circ \phi$ are 
completely contractive projections onto $W$-rigid and $W$-injective 
extensions of $V$. 
\end{prop}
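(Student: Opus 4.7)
The plan is to first establish the central identity $E \circ \phi \circ E = E$, from which every other assertion in the proposition follows mechanically. Since $E$ has range in $Y$ and $\phi$ is a $V$-map on $W$, the composition $E \circ \phi$ sends $W$ into $Y$, in particular sends $Y$ into $Y$, and fixes $V$; hence it lies in $M(Y,Y)$ as a $V$-map. By $W$-rigidity of $Y$, the map $E \circ \phi$ must act as the identity on $Y$, and evaluating on $E(w)$ for $w \in W$ gives $E \circ \phi \circ E = E$.

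Given this identity, the remaining algebraic claims are immediate. The inclusion $ker(E) \subseteq ker(\phi \circ E)$ is trivial, while the reverse uses that $\phi(E(w)) = 0$ forces $E(w) = E(\phi(E(w))) = 0$. Idempotency of $\phi \circ E$ and $E \circ \phi$ comes from substituting the central identity into each square. The range of $E \circ \phi$ is contained in $\cl R(E) = Y$ and, since $(E \circ \phi)|_Y = \mathrm{id}_Y$, also contains $Y$; hence it equals $Y$, which is $W$-rigid and $W$-injective by hypothesis. The only new extension to examine is therefore $Z := \cl R(\phi \circ E) = \phi(Y)$. Note $V \subseteq Z$ because $v = \phi(E(v))$ for $v \in V$, and $Z$ is $W$-injective because $\phi \circ E$ is a completely contractive idempotent onto $Z$.

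The step requiring genuine work is verifying the $W$-rigidity of $Z$, and this is where I expect the main obstacle. The idea is to exploit that $\phi|_Y : Y \to Z$ and $E|_Z : Z \to Y$ are mutually inverse completely isometric bijections, by the central identity on one side and idempotency of $\phi \circ E$ on the other. Given any $V$-map $\sigma : W \to W$ with $\sigma(Z) \subseteq Z$, form $E \circ \sigma \circ \phi$, which sends $Y$ into $Y$ via $\phi(Y) = Z \xrightarrow{\sigma} Z \xrightarrow{E} Y$ and fixes $V$, hence lies in $M(Y,Y)$ as a $V$-map. By $W$-rigidity of $Y$, this composition restricts to $\mathrm{id}_Y$. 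Transporting the resulting equality back to $Z$ by applying $\phi$ and using $z = \phi(E(z))$ for $z \in Z$ then yields $\sigma|_Z = \mathrm{id}_Z$, completing the proof.
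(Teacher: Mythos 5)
Your proof is correct, but it takes a genuinely different route from the paper's. The paper deduces everything from the seminorm machinery: since $\phi$ is a contraction, $p_{\phi\circ E}\le p_E$, so Lemma~\ref{lemma1} (applied with $\psi=\phi\circ E$) gives $E\circ\phi\circ E=E$ and $\ker(\phi\circ E)=\ker(E)$; then Proposition~\ref{propn1} says $p_E$ is a minimal $V$-seminorm, hence so is $p_{\phi\circ E}$, and Theorem~\ref{thm1} identifies $\cl R(\phi\circ E)$ as a $W$-rigid, $W$-injective extension. You instead get the central identity $E\circ\phi\circ E=E$ directly from $W$-rigidity of $Y$ applied to $E\circ\phi\in M(Y,Y)$, and you establish the $W$-rigidity of $Z=\cl R(\phi\circ E)$ by transporting rigidity from $Y$ to $Z$ along the mutually inverse maps $\phi|_Y$ and $E|_Z$, conjugating an arbitrary $V$-map $\sigma\in M(Z,Z)$ to $E\circ\sigma\circ\phi\in M(Y,Y)$. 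This is purely algebraic: it never invokes minimality of seminorms (nor, in fact, admissability of $W$), which makes it more self-contained and arguably cleaner. What the paper's route buys in exchange is the additional fact, not stated in the proposition but used downstream (e.g., in Theorem~\ref{thm2} and the ideal-structure results), that $p_{\phi\circ E}$ is again a \emph{minimal} $V$-seminorm equal to $p_E$; your argument does not record this. One small point worth making explicit in your write-up: the reverse kernel inclusion $\ker(\phi\circ E)\subseteq\ker(E)$ uses $E=E\circ\phi\circ E$, i.e.\ $\phi(E(w))=0$ forces $E(w)=E(\phi(E(w)))=0$ --- you do say this, and it is fine.
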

\begin{proof}
Since $\phi$ is a contraction, $p_{\phi \circ E} \le p_E$
and so we may apply Lemma~2.5 with $\phi \circ E =\psi$ to obtain
that $E \circ \phi \circ E = E$ and that $ker(\phi \circ E)= ker(E).$
Hence, $(\phi \circ E) \circ (\phi \circ E)= \phi \circ E$ and $(E \circ \phi) \circ (E \circ \phi) = E \circ \phi$ and so these maps are completely contractive projections as claimed.

By Proposition~2.6, $p_E$ is a 
minimal
$V$-seminorm and hence $p_{\phi \circ E}$ is also a minimal 
$V$-seminorm
and hence by Theorem~2.4, $\cl R(\phi \circ E)$ is a $W$-rigid and $W$-injective extension of $V$. Finally, since $E \circ \phi \circ E = E,$ we have that $\cl R(E \circ \phi) = \cl R(E)= Y,$ which is a $W$-rigid and $W$-injective extension of $V$.
Finally, since $E \circ \phi \circ E = E$, we have that 
$\cl R(E \circ \phi) =
\cl R(E)$ and so $E \circ \phi$ is a projection onto $\cl R(E)$.
\end{proof}

\begin{cor}\label{cor1}
Let $V \subseteq W$ with $W$ admissable and let $\phi: W \to W$ be a $V$-map whose range is contained in a  
$W$-injective, $W$-rigid extension $Y$ of $V$,
then $\phi$ is a projection onto $Y$.
\end{cor}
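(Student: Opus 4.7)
The plan is to reduce the statement to a single application of Proposition~\ref{propn2} by choosing the auxiliary projection $E$ appropriately and then noticing that $\phi$ coincides with the composition $E\circ\phi$.

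First, I would invoke $W$-injectivity of $Y$ to produce a completely contractive idempotent $E:W\to W$ with $\cl R(E)=Y$. Since $V\subseteq Y$ and $E$ acts as the identity on its range, $E(v)=v$ for every $v\in V$, so $E$ is a $V$-map in the sense of the preceding results, and Proposition~\ref{propn2} applies to the pair $(E,\phi)$.

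Next, I would exploit the containment $\phi(W)\subseteq Y$: because $E$ is the identity on $Y$, one has the key identity $E\circ\phi=\phi$. Applying Proposition~\ref{propn2} gives that $E\circ\phi$ is a completely contractive projection with range $\cl R(E\circ\phi)=\cl R(E)=Y$. Substituting $E\circ\phi=\phi$ then yields immediately that $\phi$ is a completely contractive projection onto $Y$, which is the claim.

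There is no real obstacle; the corollary is essentially a restatement of the last line of Proposition~\ref{propn2} once one notices that $W$-injectivity of $Y$ delivers a $V$-map $E$ of the required form and that the range condition on $\phi$ forces $E\circ\phi=\phi$.
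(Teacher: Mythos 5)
Your proposal is correct and follows essentially the same route as the paper: both take a completely contractive ($V$-map) idempotent $E$ onto $Y$ furnished by $W$-injectivity, observe that $\phi(W)\subseteq Y$ forces $E\circ\phi=\phi$, and then invoke Proposition~\ref{propn2} to conclude that $E\circ\phi$, hence $\phi$, is a projection onto $Y$. The only cosmetic difference is that you quote the range identity $\cl R(E\circ\phi)=\cl R(E)=Y$ directly from Proposition~\ref{propn2}, whereas the paper phrases that last step via the $W$-rigidity of $Y$; both are valid.
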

\begin{proof}
Let $E$ be a projection onto $Y$, then $E \circ \phi = \phi$.
By the above result $E \circ \phi$ is a projection onto an
$W$-injective subspace of $Y$ which by the $W$-rigidity of $Y$, must 
be all of $Y$.
\end{proof}

Recall that Hamana\cite{H1} introduces a partial order on projections 
by
defining $E \preceq F$ if and only if $E \circ F  = F \circ E = E.$
Note that this is equivalent to requiring that $E \circ F \circ E = F 
\circ E \circ F = E$.
To see this note that the first set of equalities clearly implies the 
second set.
If the second set of equalities holds then $F \circ E =F \circ (E 
\circ F \circ E) = 
(F \circ E \circ F) \circ E = E^2 = E$ and similarly,
$E \circ F = E$. 

\begin{thm}\label{thm2}
Let $V \subseteq W$ with $W$ admissable and let $E: W \to W$ be a $V$-map. Then the following are equivalent:
\begin{itemize}
\item[i)] $p_E$ is a minimal $V$-seminorm,
\item[ii)] $E$ is a projection onto a $W$-injective, $W$-rigid 
extension of $V$,
\item[iii)] $E$ is  minimal in the partial order on $V$-projections.
\end{itemize}
Moreover, if $E_1, E_2$ are two such minimal $V$-projections, then $\cl R(E_1)$ and $\cl R(E_2)$ are $W$-isomorphic.
\end{thm}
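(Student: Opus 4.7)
The plan is to cycle through the equivalences, relying heavily on the machinery already developed in this section. The equivalence of (i) and (ii) is essentially packaged: (i)$\Rightarrow$(ii) is Theorem~\ref{thm1}, and (ii)$\Rightarrow$(i) is Proposition~\ref{propn1}, so no new work is required for that edge.

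For (ii)$\Rightarrow$(iii), I would take any $V$-projection $F$ with $F \preceq E$. The relation $F = E \circ F$ yields $\cl R(F) \subseteq \cl R(E) =: Y$, so $F$ becomes a $V$-map in $M(Y,Y)$; $W$-rigidity of $Y$ then forces $F|_Y = \mathrm{id}_Y$, and combining with $F = F \circ E$ gives $F(w) = F(E(w)) = E(w)$ for every $w$, hence $F = E$.

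The main obstacle is the converse (iii)$\Rightarrow$(ii), since under (iii) there is no direct handle on $\cl R(E)$. The strategy is to use admissability to produce an auxiliary $V$-map $\psi$ with $p_\psi \le p_E$ and $p_\psi$ minimal, so Theorem~\ref{thm1} makes $\psi$ a projection onto a $W$-rigid, $W$-injective extension $Y$ of $V$. The key computation, using $\psi \circ E \circ \psi = \psi$ from Proposition~\ref{propn2}, is that $F := E \circ \psi \circ E$ is an idempotent $V$-map satisfying $F \circ E = E \circ F = F$, hence $F \preceq E$; the minimality hypothesis then forces $E \circ \psi \circ E = E$. Together with $\psi \circ E \circ \psi = \psi$, this asserts that $E|_Y$ and $\psi|_{\cl R(E)}$ are mutually inverse $V$-fixing maps, establishing a $W$-isomorphism between $\cl R(E)$ and $Y$. $W$-injectivity of $\cl R(E)$ is automatic since $E$ itself projects onto it. For $W$-rigidity, given a $V$-map $\phi \in M(\cl R(E), \cl R(E))$, I would transport it to $\tilde\phi := \psi \circ \phi \circ E \in M(Y,Y)$; rigidity of $Y$ forces $\tilde\phi|_Y = \mathrm{id}_Y$, and the mutual-inverse relation together with the injectivity of $\psi|_{\cl R(E)}$ then forces $\phi|_{\cl R(E)} = \mathrm{id}_{\cl R(E)}$.

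For the moreover statement, the equivalence just established makes both $\cl R(E_1)$ and $\cl R(E_2)$ $W$-rigid and $W$-injective. Two applications of Proposition~\ref{propn2}, with the roles of ``rigid/injective projection'' and ``$V$-map'' interchanged, yield $E_1 \circ E_2 \circ E_1 = E_1$ and $E_2 \circ E_1 \circ E_2 = E_2$. These identities say exactly that $E_2 \in M(\cl R(E_1), \cl R(E_2))$ and $E_1 \in M(\cl R(E_2), \cl R(E_1))$ compose to the identity on the respective ranges, yielding the desired $W$-isomorphism.
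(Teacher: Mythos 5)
Your proof is correct, and its overall skeleton matches the paper's: the edge (i)$\Leftrightarrow$(ii) is quoted from Theorem~\ref{thm1} and Proposition~\ref{propn1}, the implication (ii)$\Rightarrow$(iii) is the same absorption-plus-rigidity argument (the paper routes it through Corollary~\ref{cor1}, you invoke rigidity of $\cl R(E)$ directly, which amounts to the same thing), and the ``moreover'' clause is verified by the identical identities $E_1\circ E_2\circ E_1=E_1$ and $E_2\circ E_1\circ E_2=E_2$. The one genuine divergence is in (iii)$\Rightarrow$(ii). Both proofs start the same way: use admissability to get a minimal seminorm $p_\psi\le p_E$, build an idempotent $V$-map dominated by $E$ in the partial order, and let minimality force $E=E\circ\psi\circ E$. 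The paper then finishes in one stroke: by Corollary~\ref{cor1}, $E\circ\psi\circ E$ (having range inside $\cl R(E\circ\psi)$, already known to be $W$-rigid and $W$-injective) \emph{is} a projection onto that extension, so $E$ itself is such a projection and nothing more needs to be checked. You instead pair $E=E\circ\psi\circ E$ with $\psi\circ E\circ\psi=\psi$ to exhibit $E|_Y$ and $\psi|_{\cl R(E)}$ as mutually inverse, and then transport $W$-rigidity from $Y$ to $\cl R(E)$ by conjugating an arbitrary $V$-map $\phi\in M(\cl R(E),\cl R(E))$ to $\psi\circ\phi\circ E\in M(Y,Y)$. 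Your route costs an extra transport argument but has the side benefit of producing the $W$-isomorphism $\cl R(E)\cong Y$ explicitly along the way (so the ``moreover'' statement, for these two projections, falls out for free); the paper's route is shorter because Corollary~\ref{cor1} already encapsulates the rigidity bookkeeping. Both are complete.
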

\begin{proof}
The proof that i) implies ii) is Theorem~\ref{thm1} and
that ii) implies i) is
Proposition~\ref{propn1}.
Assume ii) and let $F \preceq E$, then by Corollary~\ref{cor1}, $F$ is also a 
projection
onto $Y$. Hence, $E= F \circ E = F$ and so E is minimal.

Next, assuming iii), let $p_F \le p_E$ be a minimal $V$-seminorm. 
Then $p_{E \circ F}$ is also a minimal $V$-seminorm and so $F$ and $E 
\circ F$ are projections
 onto  $W$-injective, $W$-rigid extensions of $V$. 
Hence by another application
of Corollary~\ref{cor1}, $E \circ F \circ E$ is another projection onto 
a $W$-injective, $W$-rigid extension of $V$.
But $E \circ(E \circ F \circ E)  = E \circ F \circ E = (E 
\circ F \circ E) \circ E$ and hence $E = E \circ F \circ E$. Thus, 
$E$ is a projection onto a $W$-injective, $W$-rigid extension of $V$.

Finally, if $E_1$ and $E_2$ are minimal $V$-projections, with ranges $Y_1$ and 
$Y_2$,
respectively, then $E_2 \circ E_1$ defines a completely isometric 
$W$-isomorphism of
$Y_1$ onto $Y_2$, with inverse $E_1 \circ E_2$.
\end{proof}

Thus, we see that all $W$-injective, $W$-rigid extensions of $V$ are 
$W$-isomorphic, provided $W$
is admissable.

\begin{defn} Let $W$ be an admissable operator space and $V$ a 
subspace. We call any $V$-map, $E:W \to W$ that satisfies the
equivalent properties of Theorem~2.9 a {\bf minimal
$V$-projection}(with respect to $W$) and let $\cl E_W(V)$ denote the
set of all minimal $V$-projections. We
let {\bf $I_W(V)$} denote the $W$-isomorphism class of the range of 
a minimal $V$-projection and
we call this operator space the {\bf $W$-injective envelope of 
$V$}.
Any operator subspace of $W$ that is the range of a minimal 
$V$-projection will be called
a {\bf copy of $I_W(V)$}.
\end{defn}

When $V = \cl A$ is a unital $C^*$-algebra and $W= \cl A'',$ then any minimal $\cl A$-projection $E$ is also a unital, completely positive map and, hence, $I_{\cl A''}(\cl A)$ will be a $\cl A''$-injective  $C^*$-algebra when endowed with the Choi-Effros product, $E(x) \circ E(y) = E(E(x)E(y)).$

When $W$ is injective, then $I_W(V)= I(V),$ the usual injective
envelope.

The following result shows that, in general, $\cl E_W(V)$ can be quite
large and explains some of its algebraic structure. Note that the set of $V$-maps, which we shall denote by $\cl M_W(V),$ is a semigroup under composition
with the identity map on $W$ serving as an identity.

\begin{prop} Let $W$ be an admissable operator space and let $V$ be an
  operator subspace. Then $\cl E_W(V)$ is the unique minimal, non-empty, two sided ideal in the semigroup $\cl M_W(V).$
\end{prop}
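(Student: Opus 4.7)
The plan is to verify three things in turn: $\cl E_W(V)$ is non-empty, it is a two-sided ideal, and it is contained in every non-empty two-sided ideal (the last giving both minimality and uniqueness). The key tool throughout is the sandwich identity $E \circ \phi \circ E = E$ from Proposition~\ref{propn2}, which says that any $V$-map becomes invisible when flanked by a minimal $V$-projection on each side.

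Non-emptiness is immediate: since $W$ is admissable, the identity map on $W$ (a $V$-map) dominates some minimal $V$-seminorm, and Theorem~\ref{thm1} produces a minimal $V$-projection realizing it, so $\cl E_W(V) \neq \emptyset$.

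For the ideal property, take $E \in \cl E_W(V)$ and $\phi \in \cl M_W(V)$. By Theorem~\ref{thm2}, $E$ is a projection onto a $W$-injective, $W$-rigid extension $Y$ of $V$. Proposition~\ref{propn2} then asserts that both $\phi \circ E$ and $E \circ \phi$ are completely contractive projections onto $W$-injective, $W$-rigid extensions of $V$. Applying Theorem~\ref{thm2} in the reverse direction, these compositions lie in $\cl E_W(V)$, so $\cl E_W(V)$ is a two-sided ideal in $\cl M_W(V)$.

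For minimality and uniqueness, let $\cl I$ be any non-empty two-sided ideal of $\cl M_W(V)$; I would pick any $\phi \in \cl I$ and any $E \in \cl E_W(V)$ and invoke the two-sided ideal property to conclude $E \circ \phi \circ E \in \cl I$. By Proposition~\ref{propn2}, $E \circ \phi \circ E = E$, so $E \in \cl I$. Since $E$ was arbitrary, $\cl E_W(V) \subseteq \cl I$. This containment property together with the fact that $\cl E_W(V)$ is itself a non-empty two-sided ideal forces it to be the unique minimal one. The only potential subtlety is making sure we understand the identity $E \circ \phi \circ E = E$ correctly: it relies on $E$ being a projection onto a $W$-rigid extension, which is precisely the content of the equivalence of conditions (i)–(iii) in Theorem~\ref{thm2}, so every element of $\cl E_W(V)$ genuinely enjoys this property. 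No real obstacle arises; the proposition is essentially a packaging of the structural results already established in this section.
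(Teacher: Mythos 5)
Your proof is correct and follows essentially the same route as the paper: the two-sided ideal property via Proposition~2.7 and Theorem~2.9, and the containment in any non-empty ideal via the sandwich identity $E \circ \phi \circ E = E$. The only addition is your explicit check of non-emptiness, which the paper leaves implicit.
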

\begin{proof} 
Given any $V$-map $\phi$ and $E \in \cl E_W(V),$
  we have that $\phi \circ E \in \cl E_W(V)$ and $E \circ \phi \in \cl
  E_W(V),$ by Proposition~2.7 and Theorem~2.9. Thus, $\cl E_W(V)$ is a two sided ideal in $\cl M_W(V).$

Given any non-empty two sided ideal $\cl J$ in $\cl M_W(V),$ let $\phi \in \cl J,$ and let $E \in \cl E_W(V).$ Then by Proposition~2.7, $E = E \circ \phi \circ E \in \cl J,$ and hence, $\cl E_W(V) \subseteq \cl J.$
\end{proof}

The following result identifies the minimal left ideals. Given any $\phi \in \cl M_W(V),$ we let $\cl L_{\phi}= \{ \psi \circ \phi : \psi \in \cl M_W(V) \},$ denote the left ideal generated by $\phi.$

\begin{thm} Let $W$ be an admissable operator space, let $V$ be an
  operator subspace, and let $E \in \cl E_W(V).$  Then $\cl L_E =
  \{ F \in \cl E_W(V): p_F=p_E \} = \{ F \in \cl
  E_W(V): ker(F) =ker(E) \},$  $\cl L_E$ is a convex set and a minimal non-empty left
  ideal in the semigroup of $V$-maps. Moreover, every minimal, non-empty left ideal in $\cl M_W(V)$ is equal to $\cl L_E$ for some $E \in \cl E_W(V).$
Finally, if $F_1, F_2 \in \cl L_E,$ then 
$F_1 +F_2 = F_1 \circ F_2 +  F_2 \circ F_1.$
\end{thm}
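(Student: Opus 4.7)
The plan hinges on one observation: if $F, G \in \cl E_W(V)$ share the same kernel, then $F = F \circ G$. This is the kernel-decomposition trick---since $G$ is idempotent we have the algebraic splitting $W = \cl R(G) \oplus \ker(G)$, so for any $x \in W$ we may write $x = G(x) + (x - G(x))$ with $x - G(x) \in \ker(G) = \ker(F)$, and linearity gives $F(x) = F(G(x))$. Using this I would prove the three-set equality as follows: if $F = \psi \circ E \in \cl L_E$, then Proposition~\ref{propn2} together with Theorem~\ref{thm2} gives $F \in \cl E_W(V)$, and contractivity of $\psi$ yields $p_F \le p_E$, forcing equality by minimality of $p_E$; the inclusion $\{F : p_F = p_E\} \subseteq \{F : \ker(F) = \ker(E)\}$ is immediate since $p_F(x) = 0$ iff $F(x) = 0$; and given $F \in \cl E_W(V)$ with $\ker(F) = \ker(E)$, the key observation yields $F = F \circ E \in \cl L_E$, closing the loop.

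The remaining claims are largely formal consequences. For convexity, write $F_i = \psi_i \circ E$; then $\lambda F_1 + (1-\lambda)F_2 = (\lambda \psi_1 + (1-\lambda)\psi_2) \circ E$, and the bracketed map is itself a $V$-map, so the convex combination lies in $\cl L_E$. That $\cl L_E$ is a left ideal follows from associativity of composition. For minimality, suppose $\cl J \subseteq \cl L_E$ is a non-empty left ideal and pick $F \in \cl J$: for every $F' \in \cl L_E$ the key identity (with $F$ in the role of $G$) gives $F' = F' \circ F \in \cl J$, whence $\cl J = \cl L_E$. Conversely, given any minimal non-empty left ideal $\cl J$ and any $j \in \cl J$, choose $E_0 \in \cl E_W(V)$ (non-empty by admissibility); then $F := E_0 \circ j$ lies in $\cl J$ (since $\cl J$ is a left ideal) and in $\cl E_W(V)$ (by Proposition~\ref{propn2} combined with Theorem~\ref{thm2}), and minimality forces $\cl J = \cl L_F$.

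Finally, since $F_1, F_2 \in \cl L_E$ share the kernel $\ker(E)$, the key identity applied symmetrically gives $F_1 = F_1 \circ F_2$ and $F_2 = F_2 \circ F_1$, and adding these yields $F_1 + F_2 = F_1 \circ F_2 + F_2 \circ F_1$. The only genuinely nontrivial step anywhere in this proof is closing the loop in the three-set equality; once the kernel-decomposition identity is recorded, the rest is bookkeeping with the semigroup structure of $\cl M_W(V)$ and the closure results from Proposition~\ref{propn2}.
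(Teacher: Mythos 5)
Your proof is correct, and its backbone --- the kernel-decomposition identity $F = F\circ G$ for idempotents with equal kernels, the chain of inclusions among the three sets, and the classification of minimal left ideals via $E_0\circ j$ --- coincides with the paper's argument. You diverge in two places, both to good effect. For convexity, the paper shows $p_{tF_1+(1-t)F_2}=p_E$ by minimality of the seminorm, whereas you simply factor $\lambda F_1+(1-\lambda)F_2=(\lambda\psi_1+(1-\lambda)\psi_2)\circ E$ and observe that the bracketed map is again a $V$-map; both are valid, and yours makes the membership in $\cl L_E$ (as the left ideal generated by $E$) immediate without re-invoking minimality. For the final identity, the paper squares the midpoint: $(F_1+F_2)/2$ lies in $\cl L_E$, hence is idempotent, and expanding $\bigl((F_1+F_2)/2\bigr)^{2}$ yields $F_1+F_2=F_1\circ F_2+F_2\circ F_1$. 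You instead apply the kernel identity directly to get $F_1=F_1\circ F_2$ and $F_2=F_2\circ F_1$ and add; this is shorter and actually establishes the strictly stronger unsymmetrized statement, of which the displayed identity is a trivial consequence. You also explicitly verify that $\cl L_E$ is itself a \emph{minimal} left ideal (any non-empty left ideal $\cl J\subseteq\cl L_E$ absorbs all of $\cl L_E$ via $F'=F'\circ F$), a point the paper leaves implicit in its treatment, so your write-up is, if anything, more complete on that score.
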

\begin{proof} First we show that the three sets are equal. If $\phi \circ E \in \cl L_E,$ then $\|\phi \circ E(w)\| \le \|E(w)\|,$ so by minimality, $p_{\phi \circ E} = p_E,$ so the first set is contained in the second. If $p_F=p_E,$ then, clearly, $ker(F)= ker(E),$ so the second set is contained in the third. If
  $ker(F) = ker(E),$ then since $E(w - E(w))=0,$ we have that $F(w) =
  F \circ E(w),$ so that $F = F \circ E \in \cl L_E,$ and all three sets are equal.

Let $F_1, F_2 \in \cl L_E$ and let $0 \le t \le 1.$  Then
  for any $w \in W,$ we have that $\|tF_1(w) + (1-t)F_2(w)\| \le
  \|E(w)\|.$
Since $p_E$ is a minimal $V$-seminorm, it follows that $p_{tF_1
  +(1-t)F_2}= p_E$ and hence, $tF_1 +(1-t)F_2 \in \cl L_E.$
Thus, $\cl L_E$ is convex.

Now, let $\cl J$ be any minimal non-empty left ideal, let $\phi \in \cl J,$ and let $F \in \cl E_W(V)$ be any element.
By Proposition, $E= F \circ \phi \in \cl E_W(V)$ and $E \in \cl J.$ Thus, by the minimality of $\cl J,$ $\cl J= \cl L_E.$

Finally, since $(F_1 +F_2)/2 \in \cl L_E,$ we have that \\
$(F_1+F_2)/2 = (F_1+ F_2)/2 \circ (F_1+F_2)/2 = 1/4(F_1 + F_1 \circ
F_2 + F_2 \circ F_1 +F_2),$\\
which implies $F_1+F_2 = F_1 \circ F_2 + F_2 \circ F_1.$
\end{proof}

\begin{cor} Let $W$ be an admissable operator space and let $V$ be a subspace.
Then $\cl E_W(V)$ is the disjoint union of the minimal left ideals in $\cl M_W(V).$
\end{cor}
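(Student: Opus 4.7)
The plan is to assemble this as an essentially immediate corollary of Theorem~2.12. I would organize the proof around three simple observations: every element of $\cl E_W(V)$ lies in some $\cl L_E$, each such $\cl L_E$ is contained in $\cl E_W(V)$ and is a minimal left ideal, and any two of the $\cl L_E$'s are either equal or disjoint. Since Theorem~2.12 also tells us that every minimal non-empty left ideal of $\cl M_W(V)$ has the form $\cl L_E$ for some $E \in \cl E_W(V)$, this yields the disjoint union description.

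First I would verify the covering: the identity map $\mathrm{id}_W$ is trivially a $V$-map, so for any $E \in \cl E_W(V)$ we have $E = \mathrm{id}_W \circ E \in \cl L_E$. Combining this with the containment $\cl L_E \subseteq \cl E_W(V)$ from Theorem~2.12 gives
\[
\cl E_W(V) = \bigcup_{E \in \cl E_W(V)} \cl L_E,
\]
and each $\cl L_E$ is a minimal non-empty left ideal in $\cl M_W(V)$ by the same theorem.

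Next I would establish disjointness. Suppose $F \in \cl L_{E_1} \cap \cl L_{E_2}$ for some $E_1, E_2 \in \cl E_W(V)$. By Theorem~2.12, membership in $\cl L_{E_i}$ forces $p_F = p_{E_i}$, whence $p_{E_1} = p_{E_2}$. Using the characterization $\cl L_{E} = \{G \in \cl E_W(V) : p_G = p_E\}$, we conclude $\cl L_{E_1} = \cl L_{E_2}$. Thus any two of the $\cl L_E$'s are either identical or disjoint, so the covering above can be refined to a genuine disjoint union indexed by the distinct seminorms $p_E$.

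Finally, I would invoke the second half of Theorem~2.12, which states that every minimal non-empty left ideal of $\cl M_W(V)$ is of the form $\cl L_E$ for some $E \in \cl E_W(V)$, to identify the index set of the disjoint union with the collection of all minimal left ideals. There is no real obstacle here; the only care required is in distinguishing the set-theoretic union (which collapses equal $\cl L_E$'s) from the indexed family, but the equivalence $\cl L_{E_1} = \cl L_{E_2} \iff p_{E_1} = p_{E_2}$ from Theorem~2.12 handles this cleanly.
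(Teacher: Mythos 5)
Your proof is correct and follows essentially the same route as the paper: both arguments combine the covering $\cl E_W(V) = \bigcup_E \cl L_E$ with the facts from Theorem~2.12 that each $\cl L_E$ is a minimal left ideal contained in $\cl E_W(V)$, that every minimal non-empty left ideal arises this way, and that any two such ideals are equal or disjoint. Your write-up is merely more explicit than the paper's one-line proof, in particular in deriving the equal-or-disjoint dichotomy from the characterization $\cl L_E = \{F \in \cl E_W(V) : p_F = p_E\}$.
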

\begin{proof} Every minimal non-empty left ideal is of the form $\cl L_E$ for some $E \in \cl E_W(V),$ and $\cl L_E \subseteq \cl M_W(V)$ and any two such ideals are either disjoint or equal.
\end{proof}

Similar results hold for the right ideal, $\cl R_E,$ generated by $E \in \cl E_W(V).$ We record some of them without proof.

\begin{prop} Let $W$ be an admissable operator space, let $V$ be an operator subspace and let $E \in \cl E_W(V).$  Then $\cl R_E = \{ F \in \cl E_W(V): \cl R(F)= \cl R(E) \},$ $\cl R_E$ is a convex set and a minimal right ideal. Moreover, every minimal, non-empty right ideal is equal to $\cl R_E$ for some $E \in \cl E_W(V)$ and $\cl E_W(V)$ is the disjoint union of all minimal right ideals.
\end{prop}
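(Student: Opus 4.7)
The plan is to mirror the proof of Theorem~\ref{thm2} for left ideals, swapping the kernel-sided tools (Lemma~\ref{lemma1}) for their range-sided counterparts coming from Proposition~\ref{propn2} and Corollary~\ref{cor1}. Throughout, set $Y = \cl R(E)$, which by hypothesis is a $W$-injective, $W$-rigid extension of $V$.

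First I would establish the description $\cl R_E = \{F \in \cl E_W(V) : \cl R(F) = Y\}$. The inclusion $\subseteq$ is immediate from Proposition~\ref{propn2}: for any $V$-map $\phi$, $E \circ \phi$ lies in $\cl E_W(V)$ and has range $Y$. For the reverse, suppose $F \in \cl E_W(V)$ satisfies $\cl R(F) = Y$. Since $Y$ is $W$-rigid and $E$ restricted to $Y$ is a $V$-map in $M(Y,Y)$, rigidity forces $E|_Y = \mathrm{id}_Y$; hence $E \circ F(w) = F(w)$ for every $w \in W$, which gives $F = E \circ F \in \cl R_E$.

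Convexity and minimality of $\cl R_E$ both fall out of this description together with Corollary~\ref{cor1}. Given $F_1, F_2 \in \cl R_E$ and $t \in [0,1]$, the convex combination $tF_1 + (1-t)F_2$ is a $V$-map with range contained in $Y$, and Corollary~\ref{cor1} identifies it as a projection onto $Y$, hence as an element of $\cl R_E$. For minimality, I would take a non-empty right ideal $\cl J \subseteq \cl R_E$ and pick $\phi \in \cl J$; the same rigidity argument applied to $\phi|_Y : Y \to Y$ yields $\phi|_Y = \mathrm{id}_Y$, so $\phi \circ E = E$, and thus $E = \phi \circ E \in \cl J$, forcing $\cl R_E \subseteq \cl J$.

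For the moreover clause, given any minimal non-empty right ideal $\cl J$, I would pick $\phi \in \cl J$ and any $E_0 \in \cl E_W(V)$; Proposition~\ref{propn2} places $\phi \circ E_0$ in $\cl E_W(V)$, and it lies in $\cl J$ by the right-ideal property. Calling this element $F$, the previous paragraph identifies $\cl R_F$ as a non-empty right ideal contained in $\cl J$, whence equality by minimality. The disjoint-union statement is then formal: each $F \in \cl E_W(V)$ lies in $\cl R_F$, and two distinct minimal right ideals must be disjoint since any common element would generate a right ideal contained in both. The one step that replaces genuine computation is the rigidity-based identification $\phi|_Y = \mathrm{id}_Y$ for any $V$-map $\phi$ with range inside $Y$; this is the substitute for the kernel equalities used throughout the proof of Theorem~\ref{thm2}, and it is really the only nontrivial ingredient.
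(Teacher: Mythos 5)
Your argument is correct, and it is worth noting that the paper itself records this proposition \emph{without} proof (``Similar results hold for the right ideal\dots We record some of them without proof''), so the only benchmark is the left-ideal analogue, Theorem~2.12. Your proof is the natural dualization of that argument, and every step checks out: the inclusion $\cl R_E \subseteq \{F : \cl R(F)=\cl R(E)\}$ from Proposition~2.7, the reverse inclusion and the identity $\phi\circ E = E$ from $W$-rigidity of $Y=\cl R(E)$ (which gives $\phi|_Y=\mathrm{id}_Y$ for any $V$-map with range in $Y$), and the ideal-theoretic bookkeeping for minimality and disjointness. The one place where your route genuinely diverges from a literal mirror of Theorem~2.12 is convexity: there the paper argues via minimality of the seminorm $p_E$, whereas you invoke Corollary~2.8 (a $V$-map with range inside $Y$ is automatically a projection onto $Y$); your version is if anything cleaner on the right-ideal side, since the range, not the kernel or the seminorm, is the invariant that classifies $\cl R_E$.
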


The following result gives a way to obtain copies of $I_W(V)$ and
clarifies the relationship of 
$I_W(V)$ with the usual injective envelope.

\begin{thm}\label{thm3} Let $V \subseteq W \subseteq B(\cl H)$ with 
$W$ admissable. If
$\phi: W \to W$ is a minimal $V$-projection, so that 
$Y= \cl R(\phi)$
is a copy of $I_W(V)$,
and there is a copy $S \subseteq B(\cl H)$ of $I(Y)$ and a completely 
contractive projection 
$E: B(\cl H) \to S$ that is an extension of $\phi$ to $B(\cl H)$ such that  
$Y = W \cap S.$
\end{thm}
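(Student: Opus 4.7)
The plan is to construct $E$ as a completely contractive projection on $B(\cl H)$ obtained by a Hamana-style minimality argument constrained to extensions of $\phi$. Let $\cl P$ denote the set of all completely contractive maps $\psi:B(\cl H)\to B(\cl H)$ with $\psi|_W=\phi$. Since $B(\cl H)$ is injective, the completely contractive map $\phi:W\to B(\cl H)$ extends to all of $B(\cl H)$, so $\cl P$ is nonempty. Crucially, $\cl P$ is closed under composition: for $\psi_1,\psi_2\in\cl P$ and $w\in W$, one has $\phi(w)\in Y\subseteq W$, so $\psi_1(\psi_2(w))=\psi_1(\phi(w))=\phi(\phi(w))=\phi(w)$, using along the way that every $\psi\in\cl P$ restricts to the identity on $Y$ (since $\phi|_Y=\mathrm{id}_Y$ and $Y\subseteq W$). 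It is likewise convex.

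Next I would run a Zorn's lemma argument on the seminorms $p_\psi$ for $\psi\in\cl P$: chains admit point-weak* cluster points in $\cl P$, because the condition $\psi|_W=\phi$ pins down $\psi$ on a fixed set and so passes to point-weak* limits exactly as in the admissibility discussion. Pick $E\in\cl P$ with $p_E$ minimal among elements of $\cl P$. Because the Ces\`aro averages $(E+E^2+\cdots+E^n)/n$ also lie in $\cl P$, the same averaging trick used in Theorem~\ref{thm1} forces $E^2=E$. Set $S=\cl R(E)$, which is a $B(\cl H)$-injective operator space containing $Y$.

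The key step, which I expect to be the main obstacle, is to upgrade this constrained minimality into classical Hamana-minimality of $E$ as a $Y$-projection on the injective space $B(\cl H)$; only then can one identify $S$ as a copy of $I(Y)$. For this, let $\psi:B(\cl H)\to B(\cl H)$ be any completely contractive $Y$-map with $p_\psi\le p_E$. Then $\psi\circ E\in\cl P$, since for $w\in W$ we have $\psi(E(w))=\psi(\phi(w))=\phi(w)$ because $\phi(w)\in Y$. Minimality of $p_E$ in $\cl P$ forces $p_{\psi\circ E}=p_E$, so for every $w$
\[
\|E(w)\|=\|\psi(E(w))\|\le\|\psi(w)\|,
\]
which gives $p_E\le p_\psi$, hence equality. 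Thus $E$ is a minimal $Y$-projection on $B(\cl H)$ in the classical sense, and the standard injective-envelope theory inside the injective space $B(\cl H)$ identifies $S$ as a copy of $I(Y)$.

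Finally, the equality $Y=W\cap S$ drops out cleanly. The inclusion $Y\subseteq W\cap S$ is immediate from $Y\subseteq W$ and $E|_Y=\mathrm{id}_Y$. Conversely, if $x\in W\cap S$, then $x=E(x)=\phi(x)\in\cl R(\phi)=Y$. Thus the whole argument reduces to choosing the right restricted family $\cl P$ to minimize over, with the content concentrated in the fact that the resulting constrained minimum is simultaneously an extension of $\phi$ and a minimal $Y$-projection on $B(\cl H)$.
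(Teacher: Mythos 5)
Your construction of $E$ is exactly the paper's: minimize $p_\psi$ over the completely contractive extensions $\psi$ of $\phi$ to $B(\cl H)$ (Zorn's lemma plus point-weak* cluster points of chains), use the Ces\`aro averages $(E+E^{(2)}+\cdots+E^{(n)})/n$ to force $E$ to be idempotent, and verify $Y=W\cap S$ by the same two inclusions. The only place you diverge is in identifying $S=\cl R(E)$ with a copy of $I(Y)$. The paper shows directly that $S$ is a rigid extension of $Y$: any completely contractive $\gamma:S\to S$ fixing $Y$ makes $\gamma\circ E$ another extension of $\phi$ with $p_{\gamma\circ E}\le p_E$, hence equality, hence $\gamma=\mathrm{id}_S$ by the same averaging trick; then ``injective and rigid over $Y$'' gives $S\simeq I(Y)$. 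You instead upgrade the constrained minimality to minimality of $p_E$ among \emph{all} $Y$-seminorms on $B(\cl H)$ and appeal to the equivalences of Theorem~2.9. Both routes are valid and of essentially the same length, since the two characterizations (rigidity versus minimal seminorm) are interchangeable by Theorem~2.9. One small point in your key display: the inequality $\|\psi(E(w))\|\le\|\psi(w)\|$ is not automatic for an arbitrary $Y$-map $\psi$; you need to note that $E(w-E(w))=0$ together with $p_\psi\le p_E$ forces $\psi(w-E(w))=0$, i.e.\ $\psi(E(w))=\psi(w)$ (this is precisely the kernel argument of Lemma~2.5), after which your chain of (in)equalities closes and $p_E=p_\psi$ follows.
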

\begin{proof}
Among all extensions of $\phi$ to $B(\cl H)$ choose one, say $E$ such 
that the induced seminorm on 
$B(\cl H)$
is minimal among the set of all such seminorms. 
The existence of such a minimal seminorm is guaranteed by Zorn's 
lemma, since every chain has a
lower bound given by taking a point weak*-limit point, as above. 
Setting $\psi_n = \frac{E + E \circ E + \ldots E^{(n)}}{n}$,
we have that $\psi_n$ still extends $\phi$ and produces a smaller 
seminorm on $B(\cl H)$ and 
consequently
must be equal to the seminorm induced by $E$. Apply to $x - E(x)$, as 
before, to deduce that 
$E$ is idempotent.

Hence $E$ is a projection onto some(necessarily) injective operator 
space $S$ and from this it 
follows that
$\phi$ is the projection onto $W \cap S$. 

We now prove that $S$ is a rigid extension of $Y.$ To this end suppose 
that $\gamma : S \to S$ is a completely contractive map that fixes $Y$.
Then $\gamma \circ E$ is another extension of $\phi$ with $p_{\gamma 
\circ E} \le p_E$ and hence we must have equality of these two 
seminorms. Hence $\gamma \circ E$ must also be idempotent. Arguing as 
in the last line of \ref{thm1}, we obtain that $\|s - \gamma(s)\| = 
0$ and so $\gamma$ is the identity on $S$.

Since $S$ is injective and a rigid extension of $Y$ we have that $S$ 
is completely isometrically isomorphic to $I(Y)$ via a map that fixes 
$Y$. That is, $S$ is one of the copies of $I(Y)$ in $B(\cl H)$.
\end{proof}

If $S$ is an arbitrary copy of $I(Y)$ in $B(\cl H)$ then it might not 
be the case that $W \cap S = Y$ or that the projection onto $S$ 
satisfies, $E(W) \subseteq W$, but we do not have a concrete example 
where these fail.

Note that since $V \subseteq Y$, we will have that any copy of $I(Y)$ will contain a copy of $I(V).$ Moreover, since any two copies, $Y_1, Y_2$ of $I_W(V)$ are $W$-isomorphic, they are completely isometrically isomorphic via a map that fixes $V$ and hence, any copies of $I(Y_1)$ and $I(Y_2)$ in $B(\cl H)$ will be completely isometrically isomorphic via a map that fixes $V.$ But we do not know if $I(V)=I(Y),$ or equivalently, if any completely contractive map, $\psi:I(Y) \to I(Y)$ that fixes $V$ is necessarily the identity map.

\begin{defn} Let $V \subseteq W$ with $W$ admissable. If $V \subseteq Y \subseteq W$ is any copy of $I_W(V)$, then we set $I^W(V) =I(Y)$ and recall that this operator space is uniquely determined up to a completely isometric isomorphism that fixes $V$ and is independent of $Y.$
\end{defn}

The following gives a characterization of $I^W(V)$ in the main case of
interest.

\begin{prop} Let $V \subseteq W \subseteq B(\cl H),$ with $W$
 admissable and let $E:B(\cl H) \to B(\cl H)$ be a $V$-map. If
  $p_E$ is 
minimal among all $V$-seminorms on $B(\cl H)$ such
  that $E(W) \subseteq W,$ then $E(B(\cl H))$ is a copy of $I^W(V)$
  and $E(W)$ is a copy of $I_W(V).$ 
\end{prop}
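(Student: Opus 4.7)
The plan is to mirror the arguments of Theorems~\ref{thm1} and~\ref{thm3}: first show $E$ is idempotent by a Cesaro-averaging trick, then separately identify $E(W)$ and $E(B(\cl H))$ as the asserted copies. Each iterate $E^{(n)}$ is a $V$-map with $E^{(n)}(W) \subseteq W$, so the Cesaro averages $\psi_n = (E + E^{(2)} + \cdots + E^{(n)})/n$ also lie in the class over which $p_E$ is minimal, and by minimality $p_{\psi_n} = p_E$. Evaluated at $x - E(x)$, this gives $\psi_n(x - E(x)) = (E(x) - E^{(n+1)}(x))/n \to 0$, so $p_E(x - E(x)) = 0$ and $E = E \circ E$. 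Set $S = E(B(\cl H))$ and $Y = E(W)$.

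Next I would show that $E|_W : W \to W$ is a minimal $V$-projection on $W$, so that $Y$ is a copy of $I_W(V)$ by Theorem~\ref{thm2}. Given a $V$-map $\phi : W \to W$ with $p_\phi \le p_{E|_W}$, use injectivity of $B(\cl H)$ to extend $\phi$ to a completely contractive $V$-map $\tilde\phi : B(\cl H) \to B(\cl H)$; note $\tilde\phi(W) = \phi(W) \subseteq W$. Then $\tilde\phi \circ E$ is a $V$-map sending $W$ into $W$ with $p_{\tilde\phi \circ E} \le p_E$, so the minimality hypothesis forces $\|\phi(E(x))\| = \|E(x)\|$ for every $x \in B(\cl H)$. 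Meanwhile, $p_\phi \le p_{E|_W}$ and idempotency of $E|_W$ give $\|\phi(w - E(w))\| \le \|E(w) - E(w)\| = 0$, so $\phi = \phi \circ E$ on $W$. Combining, $\|\phi(w)\| = \|\phi(E(w))\| = \|E(w)\|$, proving $p_\phi = p_{E|_W}$.

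For the second half, $S$ is injective as the range of a completely contractive projection on $B(\cl H)$, so it suffices to prove $S$ is a rigid extension of $Y$ in the usual sense, which gives $S \cong I(Y) = I^W(V)$. Let $\gamma : S \to S$ be completely contractive with $\gamma|_Y = \mathrm{id}_Y$. Since $E(W) = Y$, $(\gamma \circ E)(w) = E(w) \in Y$ for $w \in W$, and because $E|_S = \mathrm{id}_S$ the same holds for every iterate $(\gamma \circ E)^k$. Running the Cesaro argument on $\gamma \circ E$ inside the class of $V$-maps preserving $W$ and using minimality of $p_E$ yields $E(x - \gamma(E(x))) = 0$; since $\gamma(E(x)) \in S$ and $E$ fixes $S$, this collapses to $\gamma(E(x)) = E(x)$, so $\gamma|_S = \mathrm{id}_S$.

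The main obstacle is the middle step: the hypothesis provides minimality at the $B(\cl H)$ level while Theorem~\ref{thm2} needs minimality at the $W$ level. The bridge is the composition $\tilde\phi \circ E$, which converts a $W$-level comparison of $\phi$ against $E|_W$ into a $B(\cl H)$-level comparison against $E$; the final step exploits the observation that $p_\phi \le p_{E|_W}$ forces $\phi$ to annihilate $\ker(E|_W)$, so that $\phi = \phi \circ E$ on $W$, returning the $B(\cl H)$-level equality as the desired $W$-level equality.
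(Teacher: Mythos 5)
Your proposal is correct and follows essentially the same route as the paper: the Cesaro-averaging argument for idempotency, the composition $\tilde\phi \circ E$ (the paper's $F \circ E$) to transfer minimality from $B(\cl H)$ down to $W$ and identify $E(W)$ as a copy of $I_W(V)$, and the rigidity argument showing $E(B(\cl H))$ is a copy of $I(E(W)) = I^W(V)$. The only cosmetic difference is that you re-derive the rigidity of $S$ over $Y$ directly, whereas the paper cites Theorem~\ref{thm3}, whose proof is the same argument.
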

\begin{proof} If $p_E$ is minimal in the above sense,
then arguing as in the proof of Theorem~2.4, we see that $E$ is
idempotent and hence a $V$-projection. Let $\phi:W \to W$ be the
restriction of $E$ to $W.$ 

We claim that $p_{\phi}$ is a minimal $V$-seminorm on $W$. If not then
we have $\psi:W \to W,$ a $V$-map such that $p_{\psi} \le p_{\phi}.$
Then we have that $\|\psi(x- \phi(x))\| \le \|\phi(x- \phi(x))\|=0,$
and hence, $\psi(x) = \psi\circ \phi(x),$ for any $x \in W.$

Let $F:B(\cl H) \to B(\cl H)$ be any completely contractive extension
of $\psi.$ Then $p_{F \circ E} \le p_E$ and hence they are
equal. Thus, for any $x \in W,$ we have, $\|\psi(x) \| = \|\psi \circ
\phi(x)\| = \|F \circ E(x) = \|E(x)\| = \|\phi(x)\|$ and so $\phi$ is
a minimal $V$-seminorm on $W$.

Hence, $\phi(W)=E(W)$ is a copy of $I_W(V).$
Now in Theorem~2.11, it was shown that if $F$ is any map that extends
$\phi$ and has minimal seminorm on $B(\cl H)$ among all maps that
extend $\phi,$ then $F$ is a projection onto a copy of the injective
envelope of $\phi(W)$. But, by the choice of $E$, it is minimal among
all maps that extend $\phi= E \mid_W.$ Hence, the $E(B(\cl H))$ is a
copy of the injective envelope of $\phi(W)$ and hence is a copy of
$I^W(V).$
\end{proof} 

\begin{remark} In Proposition~2.13, we are not asserting that such a
  minimal $p_E$ exists, only that when it does it has the asserted
 properties.
However, if $V \subseteq W \subseteq B(\cl H)$ and $W$ is weak*-closed,
  then a $V$-map $E:B(\cl H) \to B(\cl H)$ such that $p_E$ is minimal
  among all $V$-seminorms on $B(\cl H)$ with $E(W) \subseteq W$ always
 exists. This can be seen by invoking Zorn's lemma. In this case any
 chain $\{ E_{\lambda} \}$ will have a lower bound as can be seen by
 taking a weak*-limit point of the chain and noting that the limiting
 map $E$ will still satisfy, $E(W) \subseteq W.$
\end{remark}

\begin{prob} Let $V \subseteq W \subseteq B(\cl H)$ with $W$ admissable. Clearly, $I(V) \subseteq I^W(V)$ are they always equal? Is it possible to give necessary and sufficient conditions that guarantee equality?
\end{prob}

The following shows why we believe that the above problem is
important.

\begin{defn}
A C*-algebra $B$ is said to be QWEP( for quotient of WEP),
if there is a C*-algebra $A$ with WEP and a *-homomorphism from $A$
onto $B.$
\end{defn}

\begin{prob} Does a C*-algebra $B \subseteq B(\cl H)$ have QWEP if and
  only if $I^{B^{\prime \prime}}(B) = I(B)$ ?
\end{prob}

In the next section we shall relate these quantities to questions about weak expectations.

%%%%%%%%%%%%%%%%%%%%%%%%%%%%%%%%%%%%%%%%%%%%%%%%%%%%%%%%%%%%%%%%%%%%%%%%%%%%%

\section{Weak Expectations and Minimal Projections} 

We now turn our attention to some applications of the ideas of the
previous section to the existence of weak expectations.

\begin{defn}
Given $V \subseteq B(\cl H),$ an operator space,
we shall let $\cl E(V)= \cl E_{B(\cl H)}(V)$ denote the set of minimal
$V$-projections on $B(\cl H)$. Given $E \in \cl E(V),$ we call $\cl R(E)$ a {\bf copy of
$I(V)$} and we denote the set of all operator spaces contained in
$B(\cl H)$ that are copies of $I(V)$ by $\cl C \cl I(V).$
\end{defn}

Note that, in general, for each $\cl S \in \cl C \cl I(V),$ there
could be many projections, $E \in \cl E(V)$ with $S= \cl R(E).$  
We will often use the following observation of Hamana\cite{H1}, that
if $E_0, E_1 \in \cl E(\cl A),$ then $E_0 \circ E_1 \in \cl E(V)$
since it must also define a minimal $V$-seminorm.  Hence,
$\cl R(E_0 \circ E_1) = \cl R(E_0)$ and $E_0: \cl R(E_1) \to \cl
R(E_1)$ is a complete isomorphism.

%\begin{thm} Let $\cl A \subseteq B(\cl H),$ be a
% $C^*$-subalgebra. Then there exists $E \in \cl E(\cl A),$ such that
%  $E(\cl A'')= \cl R(E) \cap \cl A'' $ is a copy of $I_{\cl A''}(\cl A).$
%\end{thm}
%\begin{proof} Choose $\phi: \cl A'' \to \cl A''$ such that the range
%  of $\phi$ is a copy of $I_{\cl A''}(\cl A)$ and let E be as in the
%  above theorem. Then $E(\cl A'') = \phi(\cl A'') = \cl R(E) \cap \cl A''.$
%\end{proof}

Given a concrete operator space $V \subseteq B(\cl H),$ we shall let $V^{\dag \dag}$ denote the weak*-closure of $V$ in $B(\cl H),$ so that in the case that $V= \cl A$ is a C*-subalgebra, we have that $\cl A^{\dag \dag} = \cl A''.$

\begin{thm} Let $V \subseteq B(\cl H),$ be an operator space. Then the following are equivalent:
\begin{itemize} 
\item[(i)] $V$ has a weak expectation,
\item[(ii)] $I_{V^{\dag \dag}}(V) = I(V),$ i.e., these spaces are
  completely isometrically isomorphic via a map that fixes $V,$
\item[(iii)] $I_{V^{\dag \dag}}(V)$ is injective, in the usual sense.
\end{itemize}
\end{thm}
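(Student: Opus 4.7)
The plan is to prove the cycle $(iii) \Rightarrow (i) \Rightarrow (ii) \Rightarrow (iii)$. Two of these implications are immediate. For $(iii) \Rightarrow (i)$, I would use injectivity of $I_{V^{\dag\dag}}(V)$ to extend the inclusion $V \hookrightarrow I_{V^{\dag\dag}}(V)$ to a completely contractive map $B(\cl H) \to I_{V^{\dag\dag}}(V) \subseteq V^{\dag\dag}$; this map fixes $V$ by construction, so it is a weak expectation. For $(ii) \Rightarrow (iii)$ there is nothing to do, since $I(V)$ is injective and injectivity is preserved by complete isometric isomorphism.

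For $(i) \Rightarrow (ii)$, I would run the Blackadar-type argument from the introduction once more, this time keeping track of where the resulting embedding of $I(V)$ sits inside $V^{\dag\dag}$. Let $\Phi : B(\cl H) \to V^{\dag\dag}$ be a weak expectation. Using injectivity of $B(\cl H)$, extend the inclusion $V \hookrightarrow B(\cl H)$ to a completely contractive map $\iota : I(V) \to B(\cl H)$ that fixes $V$. The composite $\Phi \circ \iota : I(V) \to V^{\dag\dag}$ then fixes $V$, and by essentiality of $I(V)$ over $V$ it is a complete isometry. Let $Y = (\Phi \circ \iota)(I(V)) \subseteq V^{\dag\dag}$, so that $Y$ is a copy of $I(V)$ sitting inside $V^{\dag\dag}$ via an isomorphism that is the identity on $V$.

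The central step is to identify $Y$ as a copy of $I_{V^{\dag\dag}}(V)$. Since $Y$ is completely isometrically isomorphic to the injective operator space $I(V)$, it is itself injective in the usual sense, hence $V^{\dag\dag}$-injective; extending $\mathrm{id}_Y$ by injectivity of $Y$ produces a completely contractive projection $P : V^{\dag\dag} \to Y$, which fixes $V$ because $V \subseteq Y$ and $P|_Y = \mathrm{id}_Y$. Moreover, $Y$ is $V^{\dag\dag}$-rigid: any $V$-map $\phi : V^{\dag\dag} \to V^{\dag\dag}$ with $\phi(Y) \subseteq Y$ restricts to a $V$-fixing completely contractive self-map of $Y$, which must be the identity by transporting the usual rigidity of $I(V)$ across the isomorphism $Y \cong I(V)$. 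Theorem~\ref{thm2} then tells us that $P$ is a minimal $V$-projection and $Y$ is a copy of $I_{V^{\dag\dag}}(V)$. The $V^{\dag\dag}$-isomorphism between any two copies of $I_{V^{\dag\dag}}(V)$, realized as a composition $E_2 \circ E_1$ of minimal $V$-projections in the last part of Theorem~\ref{thm2}, automatically fixes $V$; combining this with $Y \cong I(V)$ fixing $V$ delivers (ii).

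The main obstacle I anticipate is precisely the bookkeeping of which maps fix $V$ (rather than merely agreeing with the identity on $V$ up to some auxiliary isomorphism): Blackadar's original argument gives an embedding of $I(V)$ into $V^{\dag\dag}$, but the full force of (ii) requires pairing this embedding with a matching completely contractive projection from $V^{\dag\dag}$ onto its image, so that minimality of the projection lets us conclude that the image really is a copy of $I_{V^{\dag\dag}}(V)$ inside $V^{\dag\dag}$. Transporting the usual rigidity property of $I(V)$ to $V^{\dag\dag}$-rigidity for $Y$ is the crucial small observation that allows Theorem~\ref{thm2} to do the rest of the work.
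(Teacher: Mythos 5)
Your proof is correct, and your treatment of the two easy implications coincides with the paper's: (ii)$\Rightarrow$(iii) is immediate, and (iii)$\Rightarrow$(i) is exactly the extension of the inclusion $V\hookrightarrow I_{V^{\dag\dag}}(V)$ to a completely contractive map of $B(\cl H)$ into $V^{\dag\dag}$. For the key implication (i)$\Rightarrow$(ii), however, you take a genuinely different route. The paper starts from a minimal $V$-projection $E\in\cl E(V)$ on $B(\cl H)$ whose range lies in $V^{\dag\dag}$ and an independently chosen minimal $V$-projection $\phi$ of $V^{\dag\dag}$, and then runs the seminorm calculus of Propositions~\ref{propn1} and~\ref{propn2}: minimality forces $p_{\phi\circ E}=p_E$ and $(\phi\circ E)(V^{\dag\dag})=\phi(V^{\dag\dag})$, so that $\phi$ itself restricts to a $V$-fixing complete isometry of the copy $\cl R(E)$ of $I(V)$ onto the pre-chosen copy $\phi(V^{\dag\dag})$ of $I_{V^{\dag\dag}}(V)$. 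You instead show that the Blackadar image $Y\subseteq V^{\dag\dag}$ of $I(V)$ is \emph{itself} a copy of $I_{V^{\dag\dag}}(V)$, by verifying the intrinsic characterization in Theorem~\ref{thm2}: $Y$ is injective in the usual sense (hence $V^{\dag\dag}$-injective, which supplies the projection $P$) and $V^{\dag\dag}$-rigid (by transporting the rigidity of $I(V)$ across the $V$-fixing isomorphism). Both arguments rest on the Section~2 machinery and on the admissibility of $V^{\dag\dag}$ as a dual space; yours has the merit of producing a single subspace that is simultaneously a copy of $I(V)$ and of $I_{V^{\dag\dag}}(V)$ without having to invoke separately (via Zorn's lemma) the existence of a minimal $V$-projection on $V^{\dag\dag}$, while the paper's version realizes the identification concretely as the restriction of a minimal projection $\phi$ of $V^{\dag\dag}$, which meshes with the composition arguments used elsewhere in Section~3. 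Your closing remark about the bookkeeping of $V$-fixing maps is exactly the right point to flag, and your appeal to the last assertion of Theorem~\ref{thm2} handles it correctly.
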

\begin{proof}

Next, assuming (ii), since $I(V)$ is injective we have (iii).

Assuming (iii), we have a projection onto, $I_{V^{\dag \dag}}(V) \subseteq V^{\dag \dag},$ and so $V$ has a weak
expectation. Thus, (iii) implies (i).

Finally, assuming (i), we have $E \in \cl E(V)$ with $\cl R(E)
\subseteq V^{\dag \dag}.$ Let $\phi: V^{\dag \dag} \to V^{\dag \dag}$ be any minimal
$V$-projection, relative to $V^{\dag \dag}$ so that $\phi(V^{\dag \dag})$ is a
copy of $I_{V^{\dag \dag}}(V).$ Since the seminorm on $B(\cl H)$
generated by $E$ is minimal among all $V$-seminorms, it is equal to the seminorm on $B(\cl H)$
generated by $\phi \circ E,$ and hence, $\phi \circ E \in \cl E(V).$
Also, $\phi: E(B(\cl H)) \to (\phi \circ E)(B(\cl H))$
is a complete isometry, since the two seminorms agree. But since $\phi$ is minimal among $V$-seminorms on $V^{\dag \dag}$, we have $(\phi \circ E)(V^{\dag \dag}) = \phi(V^{\dag \dag}),$ and so $(\phi \circ
E)(B(\cl H))= (\phi \circ E)(V^{\dag \dag})= \phi(V^{\dag \dag}).$ Thus, $\phi$ is
a complete isometry from a copy of $I(V),$ namely, $\cl R(E)$ onto a copy of
$I_{V^{\dag \dag}}(V).$
\end{proof}

In the C*-algebra case we can say a bit more.

\begin{prop} Let $\cl A \subseteq B(\cl H),$ be a unital
  $C^*$-subalgebra. If $\cl A$ has a weak expectation,
then $E(\cl A'') = \cl R(E)$ for every $E \in \cl E(\cl A).$
\end{prop}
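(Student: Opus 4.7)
The plan is to combine the hypothesis (weak expectation) with the Hamana-type observation, recalled by the author just above this proposition, that the composition of two minimal $\cl A$-projections is again a minimal $\cl A$-projection. The strategy is to produce one distinguished element $F\in\cl E(\cl A)$ whose range happens to sit inside $\cl A''$, and then transport this containment to an arbitrary $E\in\cl E(\cl A)$ via the map $E\circ F$.

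First I would obtain such an $F$. Since $\cl A$ has a weak expectation, Blackadar's result (reviewed in the introduction) gives a copy $Y$ of $I(\cl A)$ with $\cl A\subseteq Y\subseteq \cl A''$. As $Y$ is injective, there is a completely contractive idempotent $F:B(\cl H)\to B(\cl H)$ with $\cl R(F)=Y$, and $F$ is an $\cl A$-map. Because $B(\cl H)$ is injective in the usual sense, $W$-rigid extensions agree with rigid extensions, so $Y=I(\cl A)$ is a $B(\cl H)$-rigid, $B(\cl H)$-injective extension of $\cl A$. Hence $F\in\cl E(\cl A)$ and $\cl R(F)\subseteq \cl A''$.

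Now let $E\in\cl E(\cl A)$ be arbitrary. By the ``moreover'' clause of Theorem~\ref{thm2} (applied with $W=B(\cl H)$), the map $E\circ F:\cl R(F)\to\cl R(E)$ is a completely isometric $B(\cl H)$-isomorphism; in particular, it is a \emph{surjection}. Consequently
\[
\cl R(E)\;=\;(E\circ F)(\cl R(F))\;=\;E(\cl R(F))\;\subseteq\;E(\cl A''),
\]
the last inclusion holding because $\cl R(F)\subseteq \cl A''$. The reverse containment $E(\cl A'')\subseteq E(B(\cl H))=\cl R(E)$ is immediate from $E$ being a projection onto $\cl R(E)$. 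Combining the two inclusions yields $E(\cl A'')=\cl R(E)$.

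There is no real obstacle once the right tool is identified; the only point that requires a moment's care is that the projection $F$ constructed from Blackadar's copy of $I(\cl A)$ inside $\cl A''$ is genuinely minimal (i.e.\ lies in $\cl E(\cl A)$), which is why it was worth remarking that the ambient space $B(\cl H)$ being injective lets us identify the $W$-rigidity of $I(\cl A)$ with ordinary rigidity.
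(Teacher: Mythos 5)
Your proof is correct and follows essentially the same route as the paper's: both use Blackadar's characterization to produce one minimal $\cl A$-projection $F$ (the paper's $E_0$) with range inside $\cl A''$, and then use the fact that composing minimal projections gives $E(\cl R(F)) = \cl R(E\circ F) = \cl R(E)$ to transport the containment to an arbitrary $E$. Your extra care in verifying that $F$ is genuinely minimal (via rigidity of $I(\cl A)$ and Theorem~\ref{thm2}) is a detail the paper leaves implicit, but the argument is the same.
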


\begin{proof} If $\cl A$ has a weak expectation, then there exists a copy of
  $I(\cl A)$ inside $\cl A''.$ Consequently, there exists $E_0 \in \cl
  E(\cl A)$ such that $\cl R(E_0) \subseteq \cl A''.$ Since $E_0$ is a
  projection, $\cl R(E_0)= E_0(\cl A'').$ Now given any, $E \in \cl
  E(\cl A),$ we have that $\cl R(E) = \cl R(E \circ E_0) =E(\cl
  R(E_0)) = E(E_0(\cl A'')) \subseteq E(\cl A'')$ and so, $E(\cl A'')
  = \cl R(E).$ 
 \end{proof}
 
 \begin{prob} Let $\cl A \subseteq B(\cl H),$ be a unital
$C^*$-subalgebra. Does there exist $E \in \cl E(\cl A),$ such that
$E(\cl A'')= \cl R(E) \cap \cl A'' $ is a copy of $I_{\cl A''}(\cl A)$?
\end{prob}

If the answer to the above problem is affirmative, then the converse of the above proposition holds.

Let 
$\cl K(\cl H)$ denote the
ideal of compact operators on $\cl H$. We now turn our attention to
the relationship between compact operators and weak expectations. Hamana\cite{H1} proves that if $\cl A \subseteq B(\cl H)$ is a unital C*-saubalgebra 
and $\cl K(\cl H) \subset \cl A$,
then  $I(\cl A) = B(\cl H)$.
The following is a slight generalization and in the C*-algebra case yields a different proof.  This proof also serves to introduce 
some of the ideas of the next section.

\begin{prop}
Let $\cl K(\cl H) \subseteq V \subseteq B(\cl H)$ be an operator space.
Then the identity map on $B(\cl H)$ is the only $V$-map and 
consequently,
$I(V) = B(\cl H).$
\end{prop}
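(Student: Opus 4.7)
The plan is to show that every $V$-map $\phi: B(\cl H) \to B(\cl H)$ coincides with the identity on $B(\cl H)$; once this is established, $I(V) = B(\cl H)$ follows because $B(\cl H)$ is injective in the usual sense and is now $V$-rigid, so it satisfies the universal property of the injective envelope of $V$.

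The first and crucial step is to upgrade $\phi$ from merely completely contractive to unital. For each unit vector $\xi \in \cl H$, the rank-one projection $P_\xi = \xi \xi^*$ lies in $\cl K(\cl H) \subseteq V$ and is therefore fixed by $\phi$. I plan to apply the $2 \times 1$-amplification of $\phi$ to the column
\[
\begin{pmatrix} P_\xi \\ I - P_\xi \end{pmatrix},
\]
which has norm one because its Gram matrix collapses to $P_\xi + (I - P_\xi) = I$. Complete contractivity then gives $\|P_\xi + (\phi(I) - P_\xi)^*(\phi(I) - P_\xi)\| \le 1$, and testing this positive operator on the unit vector $\xi$ forces $\phi(I) \xi = \xi$. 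Since $\xi$ was arbitrary, $\phi(I) = I$.

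With $\phi$ now unital and completely contractive between $C^*$-algebras, it is automatically unital completely positive, by applying Kadison's positivity theorem to each amplification $M_n(\phi)$. The classical multiplicative-domain argument then finishes the job: each fixed projection $P_\xi$ lies in the multiplicative domain of $\phi$, so for any $T \in B(\cl H)$ we have $\phi(P_\xi T P_\xi) = P_\xi \phi(T) P_\xi$. But the compression $P_\xi T P_\xi = \langle T\xi, \xi \rangle P_\xi$ is compact and hence fixed by $\phi$, so $\langle \phi(T) \xi, \xi \rangle = \langle T \xi, \xi \rangle$ for every unit $\xi \in \cl H$, and polarization forces $\phi(T) = T$.

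The main obstacle is the bootstrap from complete contractivity to unitality at the outset: there is no a priori reason why a $V$-map should preserve the identity, and the multiplicative-domain machinery that drives the rest of the argument is unavailable until we know $\phi(I) = I$. The column-matrix trick above is the essential new input, and it is precisely the hypothesis $\cl K(\cl H) \subseteq V$ that supplies the fixed rank-one projection needed to make the trick work; the remainder of the argument is a routine reprise of the Hamana-style ideas from Section~2.
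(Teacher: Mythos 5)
Your proof is correct. The first half --- forcing $\phi(I)=I$ by amplifying $\phi$ against a matrix built from a fixed finite-rank projection and $I-P$ --- is essentially the paper's own argument (the paper uses the row $(P,\phi(I)-P)$ with a general finite-rank projection and multiplies the resulting operator inequality by $P$; you use the column over a rank-one $P_\xi$ and evaluate at $\xi$; these are the same trick). Where you genuinely diverge is the second half. The paper, having upgraded $\phi$ to unital completely positive, writes each positive $R\in B(\cl H)$ as the strong limit of an increasing net of finite-rank positives $F_\alpha$, deduces $R\le\phi(R)$ from $F_\alpha=\phi(F_\alpha)\le\phi(R)$, and then squeezes with $rI-R\ge 0$ to get $\phi(R)=R$. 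You instead invoke Choi's multiplicative-domain theorem: each fixed projection $P_\xi$ lies in the multiplicative domain, so $\langle\phi(T)\xi,\xi\rangle P_\xi=P_\xi\phi(T)P_\xi=\phi(P_\xi TP_\xi)=\langle T\xi,\xi\rangle P_\xi$, and polarization finishes. Both routes are sound; yours trades the monotone-approximation argument for the bimodule machinery (which the paper itself uses later, in the local-multiplier result), and has the minor advantage of never needing the net of finite-rank approximants or the scalar $r$ with $R\le rI$. The passage from ``unital completely contractive'' to ``unital completely positive'' that you attribute to Kadison is the standard fact that a unital contraction on an operator system is positive, applied to each amplification; that is exactly what the paper relies on implicitly as well.
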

\begin{proof}
Let $\phi$ be an $\cl A$-map. We first show that $\phi(I) = I,$ so that $\phi$ is a unital complete contraction and hence completely positive.

To see this claim , note that for any finite rank projection $P$, we have that $\| (P, \phi(I) -P)\| = \|( \phi(P), \phi(I -P))\| \le \|(P, I-P)\| = 1.$ Hence,
$(\phi(I) - P)(\phi(I) -P)^* \le I -P.$ Multiplying both sides by $P$ yields that $0= P(\phi(I) -P) = P(\phi(I) -I).$ Since this holds for every finite rank projection, $\phi(I) =I.$
 
Every positive operator $R \in B(\cl H)$ is the strong limit of 
an increasing net(sequence
in the separable case) of finite rank positive operators, $\{ 
F_{\alpha} \}$.
Thus, we have that $F_{\alpha} = \phi(F_{\alpha}) \le \phi(R)$.  
Taking limits, we have that
$R \le \phi(R)$. Choosing a scalar, $r$ such that, $0 \le rI - 
R$ we have that
$rI - R \le \phi(rI - R) = rI - \phi(R)$ and hence $R=\phi(R).$  Since every 
operator is a sum of positive operators
the result follows.
\end{proof}

In the case of C*-subalgebras, we can someting about the opposite extreme, $\cl K(\cl H) \cap \cl A = 
0$.

\begin{prop}
Let $\cl A \subseteq B(\cl H)$ be a unital $C^*$-subalgebra and 
assume that $\cl K(\cl H) \cap \cl A = 0$.
Then for every copy $\cl S$ of $I(\cl A)$, there 
exists a minimal
$\cl A$-projection $E$ onto $\cl S$ with $E(\cl K(\cl H)) =0$ and hence,
$\cl S \cap \cl K(\cl H) = (0).$ 
However, there can exist minimal $\cl A$-projections
with $E(\cl K(\cl H)) \ne 0.$
\end{prop}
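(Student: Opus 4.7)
The plan for the main claim is to build a minimal $\cl A$-projection onto $\cl S$ that factors through the Calkin algebra $\cl Q := B(\cl H)/\cl K(\cl H).$ Since $\cl K(\cl H) \cap \cl A = (0),$ the quotient map $q : B(\cl H) \to \cl Q$ restricts to a faithful $*$-homomorphism, hence a complete isometry, on $\cl A.$ Thus the inclusion $\cl A \hookrightarrow \cl S$ is well-defined as a map on $q(\cl A)$ by $q(a) \mapsto a,$ giving a complete isometry $q(\cl A) \to \cl S.$ Because $\cl S$ is a copy of $I(\cl A)$ and therefore injective in the usual sense, I would extend this to a completely contractive map $\tilde E : \cl Q \to \cl S.$

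Setting $E := \tilde E \circ q : B(\cl H) \to \cl S \subseteq B(\cl H),$ I obtain a completely contractive $\cl A$-map whose range lies inside $\cl S$ and which annihilates $\cl K(\cl H).$ Since $\cl S$ is $B(\cl H)$-injective and $B(\cl H)$-rigid (as a copy of $I_{B(\cl H)}(\cl A)$), Corollary~\ref{cor1} forces $E$ to be a projection onto $\cl S,$ and Theorem~\ref{thm2} then identifies $E$ as a minimal $\cl A$-projection. The consequence $\cl S \cap \cl K(\cl H) = (0)$ is immediate: any $s$ in the intersection satisfies $s = E(s) = 0.$

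For the second claim, I would exhibit an explicit example. Take $\cl A = \bb C \cdot I$ on an infinite-dimensional Hilbert space $\cl H,$ so that $\cl A \cap \cl K(\cl H) = (0)$ and $I(\cl A) = \bb C \cdot I.$ The minimal $\cl A$-projections on $B(\cl H)$ are precisely the maps $E_\omega(x) = \omega(x)\, I$ for states $\omega$ on $B(\cl H).$ Choosing $\omega = \langle \cdot\, \xi, \xi \rangle$ for a unit vector $\xi,$ and letting $P$ denote the rank-one projection onto $\bb C \xi,$ we get $E_\omega(P) = I \ne 0,$ so $E_\omega(\cl K(\cl H)) \ne 0,$ while $E_\omega$ is still a minimal $\cl A$-projection.

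The main technical subtlety is ensuring that the $\cl A$-map produced by injectivity lands in the prescribed copy $\cl S$ and is genuinely a projection onto $\cl S$ rather than just a contractive map into some other copy of $I(\cl A).$ This is handled by extending into $\cl S$ itself (using that $\cl S$ is injective) and then invoking Corollary~\ref{cor1}, which promotes any $\cl A$-map whose range lies in the $B(\cl H)$-injective, $B(\cl H)$-rigid extension $\cl S$ to a projection onto $\cl S.$
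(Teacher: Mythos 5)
Your proposal is correct and follows essentially the same route as the paper: both factor through the Calkin algebra (using that $q$ is completely isometric on $\cl A$ since $\cl A \cap \cl K(\cl H) = (0)$), extend into $\cl S$ by injectivity, and use the rigidity machinery (Corollary~\ref{cor1} and Theorem~\ref{thm2}) to conclude the resulting $\cl A$-map is a minimal projection onto $\cl S$ killing the compacts. Your counterexample for the second claim ($\cl A = \bb C \cdot I$ with a vector state) is exactly the one the paper gives.
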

\begin{proof}
Consider the projection map $\pi$ of $B(\cl H)$ onto the Calkin 
algebra, $\cl Q(\cl H).$
Since this map is a *-isomorphism on $\cl A,$ by rigidity, it 
must be a complete isometry
on every copy of $I(\cl A)$. Thus, by composition with $\pi$ one 
is able to obtain an $\cl A$-projection
that vanishes on the compacts.

For an example of a minimal $\cl A$-projection onto a copy of 
$I(\cl A)$ that does not vanish on the
compacts, consider the case when $\cl A$ consists of the scalar 
multiples of the identity operator.
Fix a unit vector $h \in \cl H$ and set $E(T) = \langle Th,h 
\rangle I$.
\end{proof}

\begin{prob} Does it also follow that every copy of $I_{\cl
    A''}(\cl A)$ and $I^{\cl A''}(\cl A)$
  intersects the compacts trivially ?
\end{prob}

In the case that $\cl A$ is irreducible, the above result can be improved.

\begin{prop}
Let $\cl A \subseteq B(\cl H)$ be a unital, irreducible 
$C^*$-subalgebra and assume that 
$\cl K(\cl H) \cap \cl A = 0$.
Then $\phi(\cl K(\cl H)) \subseteq \cl K(\cl H)$ for every $\cl 
A$-map.
\end{prop}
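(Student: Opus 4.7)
The plan is to combine the multiplicative domain theorem with Kadison's transitivity theorem to show that every $\cl A$-map acts as a scalar multiple of the identity on $\cl K(\cl H)$, which is of course stronger than what the proposition asks. Since $\cl A$ is unital, $I \in \cl A$, so any $\cl A$-map $\phi: B(\cl H) \to B(\cl H)$ automatically satisfies $\phi(I) = I$ and is therefore a unital complete contraction, hence UCP. Because $\phi$ fixes the C*-subalgebra $\cl A$, Choi's multiplicative domain theorem places $\cl A$ inside the multiplicative domain of $\phi$, yielding the $\cl A$-bimodule identity
\[
\phi(a\,T\,b) \;=\; a\,\phi(T)\,b \qquad \text{for all } a,b \in \cl A,\ T \in B(\cl H).
\]

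Next, I fix a unit vector $\xi \in \cl H$ and study $\phi(P_\xi)$, where $P_\xi = \xi\xi^*$. Whenever $b \in \cl A$ satisfies $b\xi = 0$ we have $b P_\xi = 0$, so by bimodularity $b\,\phi(P_\xi) = \phi(bP_\xi) = 0$, forcing $\phi(P_\xi)\cl H \subseteq \ker b$. Irreducibility combined with Kadison transitivity (applied to the pair $\xi,\zeta$ for any $\zeta$ not proportional to $\xi$, producing $b \in \cl A$ with $b\xi = 0$ and $b\zeta = \zeta$) gives
\[
\bigcap_{b \in \cl A,\ b\xi = 0} \ker b \;=\; \mathbb{C}\xi,
\]
so $\phi(P_\xi)\cl H \subseteq \mathbb{C}\xi$. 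The symmetric right-module argument, using $P_\xi b = \xi(b^*\xi)^*$ and the fact that $b^* \in \cl A$, yields $\phi(P_\xi)^*\cl H \subseteq \mathbb{C}\xi$ as well; since $\phi(P_\xi) \geq 0$, this forces $\phi(P_\xi) = c_\xi\,P_\xi$ for some $c_\xi \in [0,1]$.

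To remove the dependence on $\xi$, I observe that for any other unit vector $\eta$, Kadison transitivity provides $a \in \cl A$ with $a\xi = \eta$, whence
\[
\phi(\eta\eta^*) \;=\; \phi(a\,\xi\xi^*\,a^*) \;=\; a\,\phi(P_\xi)\,a^* \;=\; c_\xi\, a\xi (a\xi)^* \;=\; c_\xi\,\eta\eta^*,
\]
so $c_\xi$ is a single constant $c$, independent of the unit vector. A standard polarization of $(\xi + i^k\eta)(\xi + i^k\eta)^*$ for $k = 0,1,2,3$ then extends the identity to $\phi(\xi\eta^*) = c\,\xi\eta^*$ for all $\xi,\eta \in \cl H$, so $\phi$ coincides with $c \cdot \mathrm{id}$ on every finite-rank operator. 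Since $\phi$ is norm continuous and the finite-rank operators are norm-dense in $\cl K(\cl H)$, this gives $\phi(K) = cK$ for every $K \in \cl K(\cl H)$, which is strictly stronger than the required inclusion.

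The main obstacle will be the joint use of the $\cl A$-bimodularity and Kadison transitivity to force $\phi(P_\xi)$ into the line $\mathbb{C}P_\xi$; once that rank-one reduction is in hand, the comparison of constants across different unit vectors, the polarization, and the continuity extension to all of $\cl K(\cl H)$ are routine.
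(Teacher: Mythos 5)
Your proof is correct, and it actually establishes something strictly stronger than the proposition: every $\cl A$-map restricts to $c\cdot\mathrm{id}$ on $\cl K(\cl H)$ for a single constant $c\in[0,1]$. The route is genuinely different from the paper's. The paper does not invoke the multiplicative domain here; instead, for a finite-rank projection $P$ it uses the transitivity/Kaplansky machinery to manufacture a net of positive elements $H_Q\in\cl A$ with $H_Q\ge P$ tending strongly to $P$, whence $0\le\phi(P)\le P$ using only the positivity and order-preservation of $\phi$ on $\cl A_h$; this shows $\phi(P)$ has rank at most $\operatorname{rank}P$, and the inclusion $\phi(\cl K(\cl H))\subseteq\cl K(\cl H)$ follows by linearity and norm continuity. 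Your argument combines Choi's multiplicative-domain theorem (making $\phi$ an $\cl A$-bimodule map, exactly the tool the paper deploys later, in the local-multiplier result of Section 4) with Kadison transitivity to pin $\phi(\xi\xi^*)$ down to $c_\xi\,\xi\xi^*$, and transitivity again to make the constant global. What your approach buys is the sharper conclusion, which recovers the paper's estimate $0\le\phi(P)\le P$ as $\phi(P)=cP$, and nearly subsumes the following proposition as well: a minimal $\cl A$-projection $E$ is idempotent, so $c^2=c$, and $c=1$ would force $E$ to fix $\cl K(\cl H)$ pointwise and hence, by the earlier proposition on algebras containing the compacts, to be the identity, which is excluded since no copy of $I(\cl A)$ can meet $\cl K(\cl H)$ here; thus $c=0$. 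What the paper's approach buys is economy of hypotheses on $\phi$ at this step. Two minor points: the right-module computation for $\phi(P_\xi)^*$ is redundant, since positivity of $\phi(P_\xi)$ together with $\operatorname{ran}\phi(P_\xi)\subseteq\bb C\xi$ already forces $\phi(P_\xi)=c_\xi P_\xi$; and in the polarization step one should note that each $(\xi+i^k\eta)(\xi+i^k\eta)^*$ is handled by homogeneity after normalizing, the zero-vector case being trivial.
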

\begin{proof}
Let $P$ and $Q$ be fixed finite rank projections with $PQ=0.$ By 
the extension of Kaplansky's
density theorem\cite{KR}, the exists a unitary $U \in \cl A$ 
such that $U= P-Q$ on the range of
$P+Q$. Since $\|U\|=\|P-Q\| =1$, the range of $P+Q$ reduces $U$.

Let $H= Re(U)$ and write $H = H^+ - H^-$. Then $H^+ \in \cl A$, 
$H^+ \ge P$ and on the range of $P+Q$, $H^+ = P$. Constructing 
one such element of $\cl A$ for
each $Q$ and letting $Q$ tend strongly to $I-P$, we obtain a net 
of elements $H_Q \in \cl A$,
such that $H_Q \ge P$ and converges strongly to $P.$ Since $H_Q 
= \phi(H_Q) \ge \phi(P)$,
we find that $P \ge \phi(P) \ge 0.$

Thus $\phi$ is rank reducing, and the result follows.
\end{proof}

\begin{prop}
Let $\cl A \subseteq B(\cl H)$ be a unital, irreducible 
$C^*$-subalgebra and assume that 
$\cl K(\cl H) \cap \cl A = 0$.
If $E$ is a minimal $\cl A$-projection, then
$E(\cl K(\cl H)) =0$.
\end{prop}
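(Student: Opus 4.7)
The plan is to combine the previous proposition with the earlier result about copies of $I(\cl A)$ intersecting the compacts trivially. The key observation is that once we know both $E(\cl K(\cl H)) \subseteq \cl K(\cl H)$ and $\cl R(E) \cap \cl K(\cl H) = (0)$, then $E(\cl K(\cl H)) \subseteq \cl R(E) \cap \cl K(\cl H) = (0)$, which is exactly what we want.

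First I would apply the immediately preceding proposition, which uses the irreducibility of $\cl A$ together with $\cl K(\cl H) \cap \cl A = 0$ to conclude that every $\cl A$-map $\phi$ satisfies $\phi(\cl K(\cl H)) \subseteq \cl K(\cl H)$. Since a minimal $\cl A$-projection $E$ is in particular an $\cl A$-map, this gives $E(\cl K(\cl H)) \subseteq \cl K(\cl H)$.

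Next I would invoke the earlier Proposition (the one asserting that when $\cl K(\cl H) \cap \cl A = 0$, every copy $\cl S$ of $I(\cl A)$ satisfies $\cl S \cap \cl K(\cl H) = (0)$, via the Calkin projection argument). Applied to the copy $\cl R(E)$ of $I(\cl A)$, this yields $\cl R(E) \cap \cl K(\cl H) = (0)$. Since $E$ is a projection, we trivially have $E(\cl K(\cl H)) \subseteq \cl R(E)$, and combining this with the previous step we conclude $E(\cl K(\cl H)) \subseteq \cl R(E) \cap \cl K(\cl H) = (0)$.

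There is no real obstacle here: the substantive content has already been established in the two earlier propositions, and this statement is essentially a corollary that packages their conclusions together. The only thing to be careful about is that irreducibility is needed only to secure invariance of $\cl K(\cl H)$ under $E$; the triviality of $\cl R(E) \cap \cl K(\cl H)$ requires only that $\cl K(\cl H) \cap \cl A = 0$.
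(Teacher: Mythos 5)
Your proof is correct, and it reaches the conclusion by a slightly different route than the paper. Both arguments begin the same way, by invoking the preceding proposition (irreducibility plus $\cl K(\cl H) \cap \cl A = 0$) to get $E(\cl K(\cl H)) \subseteq \cl K(\cl H)$. Where you diverge is in the final step: you simply observe that $E(\cl K(\cl H)) \subseteq \cl R(E)$ and quote the earlier proposition's conclusion that every copy $\cl S$ of $I(\cl A)$ satisfies $\cl S \cap \cl K(\cl H) = (0)$, so that $E(\cl K(\cl H)) \subseteq \cl R(E) \cap \cl K(\cl H) = (0)$. The paper instead re-runs the minimality machinery: it takes the Calkin quotient $\pi$, lifts via a completely positive map $\gamma: \cl Q(\cl H) \to \cl R(E)$ fixing $\cl A$, notes that $p_{\gamma \circ \pi \circ E} = p_E$ by minimality of the $\cl A$-seminorm $p_E$, and concludes that $p_E$ vanishes on the compacts because $\pi \circ E$ does (using $E(\cl K(\cl H)) \subseteq \cl K(\cl H)$). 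Your version is more economical and makes the proposition transparently a corollary of the two preceding ones; the paper's version is self-contained in the sense that it works directly with the seminorm rather than routing through the set-theoretic statement about copies of $I(\cl A)$, but the underlying mechanism (rigidity through the Calkin quotient) is the same in both, since that is what powers the earlier proposition you cite. Your closing remark correctly isolates where irreducibility is used and where only $\cl K(\cl H) \cap \cl A = 0$ is needed.
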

\begin{proof}
Let $\cl S$ be the range of $E$ and let $\gamma : \cl Q(\cl H) 
\to \cl S$ be a completely positive
map with $\gamma(\pi(a)) = a $ for every $a \in \cl A$. Since 
$p_E$ is a minimal $\cl A$-seminorm we
have that $\gamma \circ \pi \circ E$ defines the same seminorm.

However, by the above result, this latter seminorm vanishes on 
the compact operators and hence $E$
must also vanish on the compact operators.
\end{proof}

For the next result, we need to recall the canonical decomposition of
a completely bounded map, $\phi: B(\cl H) \to B(\cl H)$ into a
singular and absolutely continuous part, $\phi=
\phi_s + \phi_{ac}.$ This decomposition is achieved by considering the
generalized Stinespring representation, $\phi(x) = V^*\pi(x)W,$ where
$\pi: B(\cl H) \to B(\cl H_0)$ is a *-homomorphism and $V,W: \cl H \to
\cl H_0$ are bounded linear maps and decomposing the *-homomorphism into
it's singular and absolutely continuous parts, $\pi= \pi_s \oplus \pi_{ac}$. 
Recall that this latter decomposition is gotten by setting, $\cl
H_{ac} = \pi(\cl K(\cl H))\cl H_0,$ and $\cl H_s = \cl H_0 \ominus \cl
H_{ac}.$ When $\phi$ is completely positive, it is easy to see that
$\phi_s$ and $\phi_{ac}$ are also completely positive.  We will also use the fact that $\phi_{ac}$ is weak*-continuous.

We also remind the reader that a self-adjoint algebra of operators,
$\cl B$ on
a Hilbert space, $\cl H$, is said to {\em act non-degenerately} if the
closed linear span, $\cl B
\cl H$ is dense in $\cl H.$

\begin{prop} Let $\cl A \subseteq B(\cl H)$ be a unital $C^*$-subalgebra and
  let $\phi$ be an $\cl A$-map. If $\cl A \cap \cl K(\cl H)$ acts
  non-degenerately, then $\phi= \phi_{ac}$ and consequently, $\phi$ is
  an $\cl A''$-map.
\end{prop}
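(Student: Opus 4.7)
The plan is to use the minimal Stinespring dilation of $\phi$ together with the multiplicative-domain intertwining relation to show that the singular part of $\phi$ must vanish when $\cl B := \cl A \cap \cl K(\cl H)$ acts non-degenerately. Since $I \in \cl A$ and $\phi$ is an $\cl A$-map, $\phi(I) = I$, so $\phi$ is unital and hence unital completely positive. I would therefore write a minimal Stinespring representation $\phi(x) = V^*\pi(x)V$, with $V:\cl H\to\cl H_0$ an isometry and $\pi:B(\cl H)\to B(\cl H_0)$ a unital $*$-representation, and then decompose $\cl H_0 = \cl H_{ac} \oplus \cl H_s$ and $\pi = \pi_{ac} \oplus \pi_s$ as in the preamble to the statement. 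Writing $V_{ac} = P_{ac}V$ and $V_s = P_sV$ for the projections onto the two summands, one obtains $\phi_{ac}(x) = V_{ac}^*\pi_{ac}(x)V_{ac}$ and $\phi_s(x) = V_s^*\pi_s(x)V_s$.

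Because $\phi$ restricts to the identity on $\cl A$, $\cl A$ lies in the multiplicative domain of $\phi$, and this translates in the minimal Stinespring picture to the intertwining relation $\pi(a)V = Va$ for every $a \in \cl A$. For $K \in \cl B$ the operator $K$ is compact, so by the very construction of the singular part one has $\pi_s(K) = 0$, and hence $\pi(K) = \pi_{ac}(K) \oplus 0$ on $\cl H_{ac} \oplus \cl H_s$. Comparing the $\cl H_s$-components of the two sides of $\pi(K)V = VK$ therefore gives $V_sK = 0$ for every $K \in \cl B$.

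The key use of non-degeneracy comes in the next step: since $\cl B\cl H$ is norm-dense in $\cl H$, the bounded operator $V_s$ vanishes on a dense subset of $\cl H$ and so $V_s = 0$; consequently $\phi_s = 0$ and $\phi = \phi_{ac}$. Because $\phi_{ac}$ is weak*-continuous and $\phi$ is already the identity on the weak*-dense subalgebra $\cl A \subseteq \cl A''$, weak*-continuity upgrades this to $\phi$ being the identity on all of $\cl A''$, so $\phi$ is an $\cl A''$-map. The main obstacle is the intertwining step producing $V_sK = 0$; once the ac/singular decomposition is recorded and the multiplicative domain is brought into the minimal Stinespring picture, the remainder is essentially bookkeeping.
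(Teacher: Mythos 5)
Your proof is correct, but it takes a genuinely different route from the paper's. The paper never opens up the dilation: it takes an increasing approximate unit $\{K_i\}\subseteq \cl A\cap\cl K(\cl H)$ converging strongly to $I$ (this is where non-degeneracy enters), notes that $\phi_s$ annihilates compacts so that $K_i=\phi(K_i)=\phi_{ac}(K_i)\le\phi_{ac}(I)$, passes to the limit to get $\phi_{ac}(I)=I$, hence $\phi_s(I)=0$, and then kills $\phi_s$ entirely by complete positivity. Your argument instead works inside the minimal Stinespring representation, converts the fact that $\cl A$ lies in the multiplicative domain into the intertwining relation $\pi(a)V=Va$, and uses $\pi_s(K)=0$ together with the reducing decomposition $\cl H_0=\cl H_{ac}\oplus\cl H_s$ to conclude $V_sK=0$, whence non-degeneracy of $\cl B\cl H$ forces $V_s=0$. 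Both hinge on the same two inputs (the singular representation kills compacts; $\cl A\cap\cl K(\cl H)$ acts non-degenerately), and both finish identically via weak*-continuity of $\phi_{ac}$. The paper's order-theoretic argument is softer -- it needs only that $\phi_s$ is completely positive, vanishes on compacts, and that $\phi-\phi_s$ is positive -- while yours extracts strictly more structural information, namely that the singular summand of the dilation space is orthogonal to the range of $V$, not merely that $\phi_s(I)=0$. Do make sure you record explicitly that the cross terms $V_{ac}^*\pi(x)V_s$ vanish because $\cl H_{ac}$ reduces $\pi$, so that $\phi=\phi_{ac}+\phi_s$ with the summands given by the compressed formulas you wrote; that is the one step you are using silently.
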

\begin{proof} Since $\cl A \cap \cl K(\cl H)$ is non-degenerate, we
  can find an increasing net of compact operators, $K_i \in \cl A$ which tend strongly to
  the identity. Since, $\phi_s(K_i)=0, K_i= \phi_{ac}(K_i) \le \phi_{ac}(I).$
  But since these operators tend to the identity, $\phi_{ac}(I)=I,$ and
  hence $\phi_s(I)=0,$ which implies, $\phi_s=0,$ because it is completely
  positive.

Finally, since $\phi=\phi_{ac}$ is absolutely continuous and fixes $\cl A$,
it must also fix $\cl A''.$
\end{proof}

\begin{lemma} Let $\cl A \subseteq B(\cl H)$ be a unital $C^*$-subalgebra. If
  $\cl A \cap \cl K(\cl H)$ acts non-degenerately, then $\cl A''$ is injective.
\end{lemma}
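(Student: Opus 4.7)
The plan is to use the structure theory of non-degenerate C*-subalgebras of $\cl K(\cl H)$ to obtain an explicit description of $\cl A''$ as an $\ell^\infty$-direct sum of type I factors, from which injectivity is immediate.

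Write $\cl J := \cl A \cap \cl K(\cl H)$. The classical structure theorem for non-degenerate C*-subalgebras of $\cl K(\cl H)$ provides an orthogonal decomposition $\cl H = \bigoplus_\alpha (\cl H_\alpha \otimes \cl M_\alpha)$ such that $\cl J = \bigoplus_\alpha (\cl K(\cl H_\alpha) \otimes I_{\cl M_\alpha})$. Let $p_\alpha$ denote the projection onto the $\alpha$-th summand; these are the minimal central projections of $\cl J''$, and any approximate identity of $\cl J_\alpha := \cl K(\cl H_\alpha) \otimes I_{\cl M_\alpha}$ converges strongly to $p_\alpha$.

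The key step I would carry out first is to show that $\cl A$ is block-diagonal with respect to the $p_\alpha$. Since $\cl J$ is a two-sided ideal of $\cl A$ and is itself block-diagonal, for any $a \in \cl A$, any $\beta \ne \alpha$, and any approximate identity $(e_\lambda)$ of $\cl J_\alpha$, we have $a e_\lambda \in \cl J$, and hence $p_\beta a e_\lambda = 0$; passing to the strong limit $e_\lambda \to p_\alpha$ then gives $p_\beta a p_\alpha = 0$. Consequently $\cl A = \bigoplus_\alpha \cl A_\alpha$ with $\cl A_\alpha := p_\alpha \cl A p_\alpha$ containing $\cl J_\alpha$ as a two-sided ideal. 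A short direct computation shows that the idealizer of $\cl K(\cl H_\alpha) \otimes I_{\cl M_\alpha}$ inside $B(\cl H_\alpha \otimes \cl M_\alpha)$ is $B(\cl H_\alpha) \otimes I_{\cl M_\alpha}$, so $\cl A_\alpha = \tilde{\cl A}_\alpha \otimes I_{\cl M_\alpha}$ with $\cl K(\cl H_\alpha) \subseteq \tilde{\cl A}_\alpha \subseteq B(\cl H_\alpha)$, and weak*-density of compacts in $B(\cl H_\alpha)$ forces $\tilde{\cl A}_\alpha'' = B(\cl H_\alpha)$.

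Assembling the pieces yields $\cl A'' = \bigoplus_\alpha B(\cl H_\alpha) \otimes I_{\cl M_\alpha}$, an $\ell^\infty$-direct sum of type I factors, which is injective. Equivalently, a UCP projection $B(\cl H) \to \cl A''$ can be built explicitly by first projecting onto the block diagonal, $x \mapsto \sum_\alpha p_\alpha x p_\alpha$, and then applying $\mathrm{id} \otimes \tau_\alpha$ componentwise for any choice of state $\tau_\alpha$ on $B(\cl M_\alpha)$. The main technical obstacle is the block-diagonality of $\cl A$ in the second paragraph, which is where one must really exploit the fact that $\cl J$ is simultaneously an ideal of $\cl A$ and block-diagonal inside $B(\cl H)$; the remaining steps are a routine unwinding of the structure theorem.
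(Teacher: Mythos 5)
Your proof is correct and follows essentially the same route as the paper: both invoke the structure theorem for non-degenerate C*-subalgebras of $\cl K(\cl H)$ (Arveson's Theorem~1.4.5), deduce that $\cl A$ lies in the idealizer of $\cl J = \cl A \cap \cl K(\cl H)$ so that $\cl A'' = \cl J''$ is an $\ell^\infty$-direct sum of type I factors, and conclude injectivity. Your approximate-identity argument for block-diagonality and the idealizer computation are simply careful verifications of the step the paper compresses into ``each $A \in \cl A$ is necessarily of the form $\sum_i \oplus n_i A_i$.''
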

\begin{proof} Let $\cl B= \cl A \cap \cl K(\cl H).$ By
  \cite[Theorem~1.4.5]{Ar} and its proof $\cl B$ is unitarily equivalent to the direct sum
  of elementary $C^*$-algebras where each algebra appears with certain
  multiplicities. Thus, after this unitary equivalence we have that
  $\cl H = \sum_i n_i \cl H_i,$ where $n_i$ indicates the multiplicity
  with which $\cl H_i$ occurs and
$\cl B = \{ \sum_i \oplus n_i K_i: K_i \in \cl K(\cl H_i) \}.$

Since $\cl B = \cl A \cap \cl K(\cl H),$ we have that after the
unitray equivalence, each $A \in \cl
A$ is necessarily of the form $A = \sum_i \oplus n_i A_i,$ with $A_i
\in B(\cl H_i).$
Hence, $\cl B'' \subseteq  \cl A'' \subseteq \{ \sum_i n_i B_i: B_i
\in B(\cl H_i), \sup_i \|B_i\| < \infty \} = \cl B'',$ and it follows that
$$\cl A'' = \cl B'' = \{ \sum_i \oplus n_i B_i: B_i \in B(\cl H_i),
\sup_i \|B_i\| < \infty \},$$
which is clearly injective.
\end{proof}

\begin{thm} Let $\cl A \subseteq B(\cl H)$ be a unital
  C*-subalgebra. If $\cl A \cap \cl K(\cl H)$ acts non-degenerately
  and $E \in \cl E(\cl A),$ then $E$ is weak*-continuous and $\cl R(E) = \cl A''.$ Consequently,
  $I(\cl A)= \cl A''$ and $\cl A''$ is the unique copy of $I(\cl A)$
  contained in $B(\cl H).$
\end{thm}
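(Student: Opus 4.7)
The plan is to apply Propositions~\ref{propn2} and~3.6 together with Lemma~3.7 to squeeze $\cl R(E)$ down to $\cl A''$. First, by Proposition~3.6 the hypothesis that $\cl A \cap \cl K(\cl H)$ acts non-degenerately forces every $\cl A$-map, and in particular $E$ itself, to coincide with its absolutely continuous part. Hence $E$ is weak*-continuous and is even an $\cl A''$-map. Since $E$ is a projection that fixes every element of $\cl A''$, this gives $\cl A'' \subseteq \cl R(E)$ for free.

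For the reverse containment I invoke Lemma~3.7: $\cl A''$ is injective, so there exists a completely contractive projection $F: B(\cl H) \to \cl A''$. Because $\cl A \subseteq \cl A'' = \cl R(F)$, this $F$ is an $\cl A$-map on $B(\cl H)$. Now $\cl R(E)$, being the range of a minimal $\cl A$-projection, is a $B(\cl H)$-rigid and $B(\cl H)$-injective extension of $\cl A$ by Theorem~\ref{thm2}, and $B(\cl H)$ is admissable as a dual space. Thus Proposition~\ref{propn2}, applied to $E$ with $\phi = F$, yields $E \circ F \circ E = E$. Consequently
\[
\cl R(E) \;=\; \cl R(E \circ F \circ E) \;\subseteq\; \cl R(F) \;=\; \cl A'',
\]
and together with the first step we conclude $\cl R(E) = \cl A''$.

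Since this holds for every $E \in \cl E(\cl A)$, we get $I(\cl A) \simeq \cl A''$ via a map fixing $\cl A$. Moreover, by Definition~3.1 every copy of $I(\cl A)$ sitting inside $B(\cl H)$ is of the form $\cl R(E)$ for some $E \in \cl E(\cl A)$, so $\cl A''$ is the unique such copy.

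The main obstacle is establishing $\cl R(E) \subseteq \cl A''$, since a priori a minimal projection could have range strictly larger than $\cl A''$. The crucial trick is to use the injectivity of $\cl A''$ granted by Lemma~3.7 to manufacture an $\cl A$-map $F$ whose range is \emph{exactly} $\cl A''$, and then to use the rigidity identity $E \circ \phi \circ E = E$ from Proposition~\ref{propn2} as a squeezing device. Everything else is bookkeeping with the equivalences in Theorem~\ref{thm2} and the definitions of Section~3.
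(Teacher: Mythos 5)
Your strategy is the paper's own: the non-degeneracy proposition gives $E=E_{ac}$, hence weak*-continuity and the fact that $E$ fixes $\cl A''$ pointwise (so $\cl A'' \subseteq \cl R(E)$), and the injectivity of $\cl A''$ is then used to force the reverse containment. However, the one step that is supposed to deliver that reverse containment is wrong as written. Under the composition convention used throughout the paper, $(E\circ F\circ E)(x)=E(F(E(x)))$, so the range of a composite lies in the range of the \emph{last-applied} (outermost) map; thus $\cl R(E\circ F\circ E)\subseteq \cl R(E)$, not $\cl R(F)$. (Indeed, under either reading of $\circ$ the outermost factor is $E$, never $F$.) Your displayed chain $\cl R(E)=\cl R(E\circ F\circ E)\subseteq \cl R(F)=\cl A''$ therefore asserts precisely the containment you are trying to prove and does not follow from the identity $E=E\circ F\circ E$.

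The gap is closable with ingredients already in your first paragraph. From $E=E\circ F\circ E$ you get, for every $x$, $E(x)=E\bigl(F(E(x))\bigr)$ with $F(E(x))\in \cl R(F)=\cl A''$; since $E$ fixes $\cl A''$ pointwise, $E\bigl(F(E(x))\bigr)=F(E(x))\in\cl A''$, so $\cl R(E)\subseteq E(\cl A'')=\cl A''$. Equivalently, one may quote the earlier proposition of Section~3 asserting that once some copy of $I(\cl A)$ sits inside $\cl A''$ (which the projection $F$ guarantees, so that $\cl A$ has a weak expectation), every $E\in\cl E(\cl A)$ satisfies $E(\cl A'')=\cl R(E)$, and again $E(\cl A'')=\cl A''$ because $E$ is an $\cl A''$-map. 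With that repair the rest of your argument --- the identification $I(\cl A)\simeq\cl A''$ and the uniqueness of the copy, since every copy is $\cl R(E)$ for some $E\in\cl E(\cl A)$ --- is correct and coincides with the paper's proof.
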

\begin{proof} By Proposition~3.8, $E=E_{ac}$ so that $E$ is
  weak*-continuous and also $E$ fixes $\cl A''$ so that $\cl A''
  \subseteq \cl R(E).$ But since $\cl A''$ is injective, $\cl R(E)
  \subseteq \cl A''$ for any minimal $\cl A$-projection and the result
  follows.
\end{proof}

Earlier, we saw that if we fix $E \in \cl E(\cl A)$, then the set $\cl
J_E$ is a convex left ideal in the semigroup of all $\cl
A$-maps. Note that in this case $\cl J_E$ is also closed 
in the point-weak*-topology. We also have that $\cl J_E$ is left
invariant under the action of the smaller convex semigroup $\Gamma$ of normal $\cl 
A$-maps. Consequently, there exist minimal $\Gamma$-invariant 
subsets of $\cl E(\cl A).$

\begin{prop}
Let $\cl A \subseteq B(\cl H)$ be a unital $C^*$-subalgebra. Then $\cl 
A$ has a weak expectation if and only if there exists a minimal 
$\Gamma$-invariant subset of $\cl E(\cl A)$ that is a singleton.
\end{prop}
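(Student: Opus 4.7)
The plan is to first unfold the hypothesis: a nonempty singleton $\{E\} \subseteq \cl E(\cl A)$ is automatically minimal, so $\{E\}$ being a minimal $\Gamma$-invariant subset is equivalent to $\gamma \circ E = E$ for every $\gamma \in \Gamma$ (note that $\gamma \circ E$ lies in $\cl E(\cl A)$ by Proposition~2.7 and Theorem~2.9, so the left action of $\Gamma$ on $\cl E(\cl A)$ is well defined). Thus the proposition reduces to the equivalence: $\cl A$ has a weak expectation if and only if some $E \in \cl E(\cl A)$ is left-fixed by every normal $\cl A$-map.

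For the forward direction I would invoke Blackadar's theorem (reviewed in the introduction) to pick $E_0 \in \cl E(\cl A)$ with $\cl R(E_0) \subseteq \cl A''$. Any $\gamma \in \Gamma$ is a normal unital completely positive map fixing the weak*-dense subalgebra $\cl A$, so by weak*-continuity $\gamma$ fixes $\cl A''$ and hence fixes $\cl R(E_0)$; this gives $\gamma \circ E_0 = E_0$, so $\{E_0\}$ is the required singleton.

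The converse is where the content lies. The plan is to produce enough normal $\cl A$-maps to force $\cl R(E) \subseteq \cl A''$, and the natural candidates are inner automorphisms by unitaries in the commutant. For each unitary $U \in \cl A'$, the map $\gamma_U(x) = U x U^*$ is a weak*-continuous $*$-automorphism of $B(\cl H)$ that fixes $\cl A$ pointwise (because $U$ commutes with $\cl A$), so $\gamma_U \in \Gamma$. Applying $\gamma_U \circ E = E$ yields $U E(x) U^* = E(x)$ for every $x \in B(\cl H)$ and every unitary $U \in \cl A'$; since any von Neumann algebra is the linear span of its unitaries, $E(x)$ commutes with all of $\cl A'$, and so $\cl R(E) \subseteq (\cl A')' = \cl A''$. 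A second appeal to Blackadar's theorem then delivers a weak expectation. The one step requiring any real insight is the recognition that the commutant unitaries supply a family of normal $\cl A$-maps rich enough to detect $\cl A''$ as their common fixed-point set; once that observation is in hand, both implications are short verifications.
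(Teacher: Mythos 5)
Your proposal is correct, and its overall architecture coincides with the paper's: both directions turn on the observation that $\{E\}$ being $\Gamma$-invariant is the same as $\cl R(E) \subseteq \cl A''$. Your proof of the direction ``singleton $\Rightarrow$ weak expectation'' is exactly the paper's argument: conjugation by unitaries $U \in \cl A'$ gives normal $\cl A$-maps, invariance forces $UE(x)U^* = E(x)$, and since $\cl A'$ is spanned by its unitaries, $\cl R(E) \subseteq \cl A''$; Blackadar's characterization then finishes. Where you genuinely diverge is in the other direction. The paper invokes Haagerup's structure theorem, writing each $\gamma \in \Gamma$ as $\gamma(x) = \sum b_i^* x b_i$ with $b_i \in \cl A'$ and $\sum b_i^* b_i = 1$, from which $\gamma$ visibly fixes $\cl A''$ elementwise and hence fixes any $E_0$ with $\cl R(E_0) \subseteq \cl A''$. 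You instead observe that the fixed-point set of a weak*-continuous map is weak*-closed and contains $\cl A$, hence contains the weak*-closure $\cl A'' $ by the double commutant theorem. Both routes establish the same intermediate fact --- every normal $\cl A$-map fixes $\cl A''$ pointwise --- but yours does so with softer tools, trading a nontrivial structure theorem for a two-line continuity argument; the paper's version has the side benefit of exhibiting $\Gamma$ concretely, which it uses again in the surrounding discussion. Your preliminary reduction (a $\Gamma$-invariant singleton is automatically a minimal $\Gamma$-invariant set, and $\gamma \circ E \in \cl E(\cl A)$ so the left action is well defined) is a worthwhile clarification that the paper leaves implicit.
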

\begin{proof}
Let $E$ be $\Gamma$-invariant subset that is a singleton and let 
$U \in \cl A'$ be a 
unitary, then $U^*E(x)U = E(x)$ for all $x$ and it follows that 
the range of $E$ is contained in $\cl A''$.

Conversely,
by a theorem of Haagerup every element of $\Gamma$ has the form
$\phi(x) = \sum b^*_ixb_i$ for some sequence of elements in 
$\cl A'$ satisfying $\sum b^*_ib_i = 1.$
If $\cl A$ has a weak expectation, then there is a minimal $\cl 
A$-projection, $E$, whose
range is contained in $\cl A''$, and that element is fixed by 
$\Gamma.$
\end{proof}

If $\cl A$ has a weak expectation, does every minimal 
$\Gamma$-invariant subset have to be a singleton?

%%%%%%%%%%%%%%%%%%%%%%%%%%%%%%%%%%%%%%%%%%%%%%%%%%%%%%%%%%%%%%%

\section{The Fixed Space}

In this section we study the set of elements that are fixed by 
all $V$-maps.  We first show that this is a type of reflexive cover of $V$ and then in the case of a C*-subalgebra, we show that this set can be identified
with a type of order completion.

\begin{defn}
Let $V \subseteq W$ be operator spaces. We set
$\cl F_{W}(V) = \{ T \in W : \phi(T) = T \text{ for every
V-map, }  \phi: W \to W \}.$ When $V \subseteq B(\cl H)$ we shall write $\cl F(V) \equiv \cl F_{B(\cl H)}(V).$
\end{defn}

We shall generally be concerned with the case where $V= \cl A$ and $W= \cl
B$ are C*-algebras, but the case of a pair of operator spaces is
equally interesting. 
The following is immediate.

\begin{prop} Let $V \subseteq W \subseteq \cl B(\cl H),$ then $\cl F(V) \cap W \subseteq \cl F_{W}(V).$
\end{prop}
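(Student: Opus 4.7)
The plan is to show that any $V$-map $\phi:W\to W$ can be extended to a $V$-map on all of $B(\cl H)$, after which the defining property of $\cl F(V)$ immediately forces elements of $\cl F(V)\cap W$ to be fixed by $\phi$.

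First I would fix $T\in\cl F(V)\cap W$ and an arbitrary $V$-map $\phi:W\to W$; what needs to be verified is that $\phi(T)=T$. View $\phi$ as a completely contractive map from $W$ into the injective operator space $B(\cl H)$ (using the inclusion $W\subseteq B(\cl H)$). By the Wittstock/Arveson-type Hahn--Banach extension theorem for completely contractive maps into $B(\cl H)$, there is a completely contractive map $\tilde\phi:B(\cl H)\to B(\cl H)$ whose restriction to $W$ coincides with $\phi$.

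Second, since $\tilde\phi$ extends $\phi$ and $\phi(v)=v$ for every $v\in V\subseteq W$, the extension $\tilde\phi$ is a $V$-map in the sense of the definition applied to $W'=B(\cl H)$. Because $T\in\cl F(V)$, this yields $\tilde\phi(T)=T$. On the other hand, $T\in W$, so $\tilde\phi(T)=\phi(T)$, and therefore $\phi(T)=T$. As $\phi$ was an arbitrary $V$-map on $W$, we conclude $T\in\cl F_W(V)$, which is exactly the claimed inclusion.

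There is no genuine obstacle here: the only nontrivial ingredient is the availability of a completely contractive extension of $\phi$ to $B(\cl H)$, which is the standard Wittstock extension theorem and is applicable precisely because the target is $B(\cl H)$. One small point worth flagging is to be explicit that the extended map is still a $V$-map, since membership in $V$ is preserved by extension (the extension agrees with $\phi$ on $W$, and $V\subseteq W$), so no subtlety is lost when passing from $W$-valued to $B(\cl H)$-valued maps.
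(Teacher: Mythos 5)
Your proof is correct, and it is the argument the paper has in mind: the paper states this proposition without proof (``The following is immediate''), and the evident reason is exactly the one you give, namely that any $V$-map on $W$ extends by injectivity of $B(\cl H)$ (Wittstock) to a $V$-map on $B(\cl H)$, which must then fix every element of $\cl F(V)\cap W$. Nothing further is needed.
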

 
 In general we won't have equality. To see this note that if $\cl A \subseteq \cl S \subseteq B(\cl H),$ where $\cl S$ is one of the copies of $I(\cl A)$, then $\cl F_{\cl S}(\cl A) = \cl S,$ since every completely positive map from $\cl S$ to $\cl S$ that fixes $\cl A$ necessarily fixes all of $\cl S,$ by the rigidity property of injective envelopes. But there can be other copies of the injective envelope embedded in $B(\cl H)$ and by taking a projection onto one of these other copies, we obtain a completely positive map on $B(\cl H)$ that fixes $\cl A$ but doesn't fix $\cl S.$

The following result shows that $\cl F(V)$ is a sort of reflexive cover of $V.$

\begin{prop}
Let $V \subseteq B(\cl H)$ be an operator space. Then
$\cl F(V) = \{ T \in B(\cl H) : E(T) = T \text{ for every } E \in \cl 
E(V) \}
= \bigcap  \cl S$, where the intersection is taken over all  $\cl S \in \cl C \cl I(V)$, i.e., over all copies of $I(V)$. Consequently, $I(V) = I(\cl F(V)).$
\end{prop}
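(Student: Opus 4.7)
The plan is to prove equality of the three sets by a cyclic chain of containments, and then deduce the injective-envelope identification from essentiality. Write $A = \cl F(V)$, $B = \{T \in B(\cl H) : E(T) = T \text{ for every } E \in \cl E(V)\}$, and $C = \bigcap_{\cl S \in \cl C\cl I(V)} \cl S$. Observe first that $W = B(\cl H)$ is admissable (being a dual space), so all the machinery of Section~2 applies with this $W$. The containment $A \subseteq B$ is tautological since every $E \in \cl E(V)$ is, in particular, a $V$-map. For $B \subseteq C$: given $T \in B$ and $\cl S \in \cl C\cl I(V)$, pick any $E \in \cl E(V)$ with $\cl R(E) = \cl S$; then $T = E(T) \in \cl R(E) = \cl S$.

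The core step is $C \subseteq A$. Take $T \in C$ and let $\phi : B(\cl H) \to B(\cl H)$ be an arbitrary $V$-map. Fix any $E \in \cl E(V)$. By Proposition~2.7 applied with $Y = \cl R(E)$, the composition $\phi \circ E$ is a completely contractive projection onto a $B(\cl H)$-rigid, $B(\cl H)$-injective extension of $V$, and by Theorem~2.9 this is equivalent to $\phi \circ E$ being itself a minimal $V$-projection. Hence $\cl R(\phi \circ E) \in \cl C\cl I(V)$. Because $T$ lies in every copy of $I(V)$, it is fixed by both of the projections $E$ and $\phi \circ E$, and so
\[
\phi(T) \;=\; \phi(E(T)) \;=\; (\phi \circ E)(T) \;=\; T.
\]
Since $\phi$ was arbitrary, $T \in A$. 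I expect this to be the main obstacle: the trick is recognising that an arbitrary $V$-map becomes a minimal $V$-projection once post-composed with any $E \in \cl E(V)$, which is precisely what allows the purely geometric hypothesis ``$T$ lies in every copy of $I(V)$'' to be converted into invariance under every $V$-map.

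For the concluding assertion, fix one copy $\cl S_0 \in \cl C\cl I(V)$, which is completely isometrically isomorphic to $I(V)$ via a map fixing $V$; then $V \subseteq \cl F(V) \subseteq \cl S_0 \simeq I(V)$. If $\psi : I(V) \to I(V)$ is a complete contraction that is a complete isometry on $\cl F(V)$, then, a fortiori, $\psi$ is a complete isometry on $V$, and essentiality of $I(V)$ over $V$ forces $\psi$ to be a complete isometry on all of $I(V)$. Thus $I(V)$ is an essential extension of $\cl F(V)$; combined with its injectivity, this identifies $I(V)$ as the injective envelope $I(\cl F(V))$, up to the canonical $V$-fixing isomorphism.
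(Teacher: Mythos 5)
Your proposal is correct and follows essentially the same route as the paper: the set equalities are reduced to the single observation that $\phi \circ E \in \cl E(V)$ for any $V$-map $\phi$ and any $E \in \cl E(V)$, so that $\phi(T) = \phi(E(T)) = (\phi\circ E)(T) = T$. The only difference is cosmetic (you close the cycle via $C \subseteq A$ where the paper shows $B \subseteq A$, with $B = C$ obvious either way), and you additionally spell out the essentiality argument for $I(V) = I(\cl F(V))$, which the paper leaves unstated.
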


\begin{proof}  The equality of the last two sets is obvious, as is the fact that the first set is contained in the second set. Now if $E(T)=T$ for every $E \in \cl E(V)$ and $\phi$ is any $V$-map, then $\phi(T) = \phi(E(T)) = T,$ since $\phi \circ E \in \cl E(V).$
\end{proof}

In the case of a C*-subalgebra we can say a bit more.

\begin{prop}
Let $\cl A \subseteq B(\cl H)$ be a unital $C^*$-subalgebra. Then $\cl 
F(\cl A) \subseteq \cl F_{\cl A''}(\cl A) \subseteq \cl A''.$ Moreover, $I_{\cl A''}(\cl A) = I_{\cl A''}(\cl F(\cl A)).$
\end{prop}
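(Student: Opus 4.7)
The plan has two halves: establish the two inclusions and then use them to match up the minimal projections. The right-hand inclusion $\cl F_{\cl A''}(\cl A) \subseteq \cl A''$ is immediate from Definition~4.1, so I focus on the rest.

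For $\cl F(\cl A) \subseteq \cl F_{\cl A''}(\cl A)$ I would argue in two steps. First, I would show $\cl F(\cl A) \subseteq \cl A''$: given $T \in \cl F(\cl A)$ and any unitary $U \in \cl A'$, the inner $*$-automorphism $\phi_U(x) = U^*xU$ of $B(\cl H)$ is completely contractive and fixes $\cl A$, so it is an $\cl A$-map. Hence $T = \phi_U(T) = U^*TU$, which forces $T$ to commute with every unitary in $\cl A'$ and thus lie in $(\cl A')' = \cl A''$. Second, given an arbitrary $\cl A$-map $\psi: \cl A'' \to \cl A''$, I would use the injectivity of $B(\cl H)$ to extend the composition $\cl A'' \xrightarrow{\psi} \cl A'' \hookrightarrow B(\cl H)$ to a completely contractive $\widetilde{\psi}: B(\cl H) \to B(\cl H)$. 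Since $\widetilde{\psi}$ still fixes $\cl A$, it is an $\cl A$-map on $B(\cl H)$, and therefore $\psi(T) = \widetilde{\psi}(T) = T$ for every $T \in \cl F(\cl A)$, giving $T \in \cl F_{\cl A''}(\cl A)$.

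For the identity $I_{\cl A''}(\cl A) = I_{\cl A''}(\cl F(\cl A))$, the strategy is to exhibit a single projection that is minimal for both problems. Fix a minimal $\cl A$-projection $E: \cl A'' \to \cl A''$, which exists because the weak*-closed space $\cl A''$ is admissable (Remark~2.3). Applying the extension argument above with $\psi = E$ shows that $E$ fixes $\cl F(\cl A)$, so $E$ is also an $\cl F(\cl A)$-map on $\cl A''$. Conversely, because $\cl A \subseteq \cl F(\cl A)$, every $\cl F(\cl A)$-map on $\cl A''$ is automatically an $\cl A$-map, and therefore the family of $\cl F(\cl A)$-seminorms on $\cl A''$ sits inside the family of $\cl A$-seminorms. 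Minimality of $p_E$ in the larger family hence forces minimality in the smaller one, so $E$ qualifies as a minimal $\cl F(\cl A)$-projection. Its range $\cl R(E)$ then represents both injective envelopes in the sense of Definition~2.10, and the equality follows.

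The main obstacle is the second step of the inclusion proof: the fixed-point condition defining $\cl F(\cl A)$ is formulated using $\cl A$-maps on all of $B(\cl H)$, yet we must apply it to maps that a priori only live on $\cl A''$. Injectivity of $B(\cl H)$ is exactly what bridges this gap, and once this is in place the minimality transfer for the ``Moreover'' assertion is a soft argument about shrinking the ambient semigroup of maps.
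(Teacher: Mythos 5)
Your proposal is correct and takes essentially the same route as the paper: the paper establishes $\cl F(\cl A) \subseteq \cl A''$ with the pinching maps $X \mapsto PXP + (I-P)X(I-P)$ for projections $P \in \cl A'$ rather than your unitary conjugations (an immaterial variant), then cites Proposition~4.2, whose content is exactly the extension-by-injectivity argument you spell out, and settles the ``Moreover'' by the same observation that the $\cl A$-maps and the $\cl F(\cl A)$-maps on $\cl A''$ coincide, so a minimal projection for one is minimal for the other.
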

\begin{proof}
Let $P \in \cl A'$ be a projection and define an $\cl A$-map by 
$\phi(X) = PXP + (I-P)X(I-P)$.
If $T \in \cl F(\cl A)$, then $\phi(T) = T$ and hence $TP=PT$.  
Since $T$ commutes with
every projection in $\cl A'$, we have that $T \in \cl A''.$
Thus, $\cl F(\cl A) \subseteq \cl A''.$ Applying the Proposition~4.2, we have that $\cl F(\cl A) = \cl F(\cl A) \cap \cl A'' \subseteq \cl F_{\cl A''}(\cl A).$

The last statement follows since any unital completely positive map that fixes $\cl A$ fixes $\cl F(\cl A).$
\end{proof}

\begin{prob} Is $\cl F(V) \subseteq V^{\dag \dag},$ for every operator space?
\end{prob}

\begin{remark} Note that by the above results, if $\cl A= \cl A'' \subseteq B(\cl H)$ is a non-injective vonNeumann subalgebra, then $\cl A = \cl F(\cl A) = \bigcap \cl S$ and so there are certainly multiple copies of $I(\cl A)$. Later we will see an example of a non-injective C*-algebra for which there is a unique copy of $I(\cl A)$ and $I(\cl A) \subsetneq \cl A''.$
\end{remark}

\begin{prop} Let $\cl A \subseteq B(\cl H)$ be a unital $C^*$-subalgebra. If $\cl A \cap \cl K(\cl H)$ acts non-degenerately, then $\cl F(\cl A) = \cl A''.$
\end{prop}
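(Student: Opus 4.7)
The plan is to combine Proposition~4.4 (which already gives the inclusion $\cl F(\cl A) \subseteq \cl A''$) with Proposition~3.8, so that the argument reduces to showing the reverse inclusion $\cl A'' \subseteq \cl F(\cl A).$

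First I would recall the definition: $\cl F(\cl A) = \{T \in B(\cl H) : \phi(T)=T \text{ for every } \cl A\text{-map } \phi\}.$ The inclusion $\cl F(\cl A) \subseteq \cl A''$ is already established in Proposition~4.4 without any compactness hypothesis, so the whole content of the statement is $\cl A'' \subseteq \cl F(\cl A).$

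To get the reverse inclusion, I would simply invoke Proposition~3.8. That proposition is stated for an arbitrary $\cl A$-map $\phi$ under exactly the present hypothesis that $\cl A \cap \cl K(\cl H)$ acts non-degenerately, and its conclusion is that $\phi = \phi_{ac}$ is weak*-continuous and moreover is an $\cl A''$-map, meaning $\phi$ fixes every element of $\cl A''.$ But this is precisely the statement that every $T \in \cl A''$ is fixed by every $\cl A$-map $\phi,$ i.e., $T \in \cl F(\cl A).$ Hence $\cl A'' \subseteq \cl F(\cl A),$ and combined with Proposition~4.4 we obtain $\cl F(\cl A) = \cl A''.$

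I do not expect any real obstacle here; the result is essentially an immediate corollary of Proposition~3.8, and the only thing to verify is that ``$\phi$ is an $\cl A''$-map'' in the sense used in Proposition~3.8 unpacks to ``$\phi(T)=T$ for every $T \in \cl A''$,'' which is exactly the condition in the definition of $\cl F(\cl A).$ The proof will therefore be a short two-line argument citing Propositions~4.4 and~3.8.
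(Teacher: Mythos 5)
Your proof is correct and takes essentially the same route as the paper: the paper's one-line proof cites Theorem~3.10 (itself a consequence of Proposition~3.8) for the inclusion $\cl A'' \subseteq \cl F(\cl A)$ and implicitly uses Proposition~4.4 for the reverse, while you simply go one step further back in the chain and invoke Proposition~3.8 directly. Nothing is missing.
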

\begin{proof}  By Theorem~3.10, $\cl A'' \subseteq \cl F(\cl A).$
\end{proof}

\begin{prop}  Let $\cl A \subseteq B(\cl H)$ be a unital $C^*$-subalgebra. If $\cl A \cap \cl K(\cl H) = (0),$ then $\cl F(\cl A) \cap \cl K(\cl H) = (0).$
\end{prop}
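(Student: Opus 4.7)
The plan is to combine the two structural results already established. Proposition~4.3 identifies $\cl F(\cl A)$ as the intersection $\bigcap_{\cl S} \cl S$ taken over all $\cl S \in \cl{CI}(\cl A)$, i.e., over every copy of the injective envelope sitting inside $B(\cl H)$. Proposition~3.5, which has exactly the hypothesis ``$\cl A \cap \cl K(\cl H) = (0)$'' that we are assuming here, guarantees that each individual copy $\cl S \in \cl{CI}(\cl A)$ satisfies $\cl S \cap \cl K(\cl H) = (0)$.

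Putting these together, the proof is essentially immediate: pick any single copy $\cl S_0 \in \cl{CI}(\cl A)$ (such a copy exists because $B(\cl H)$ is admissable and so $\cl E(\cl A)$ is non-empty by the basic existence argument from Section~2). Then
$$\cl F(\cl A) \cap \cl K(\cl H) \,\subseteq\, \cl S_0 \cap \cl K(\cl H) \,=\, (0),$$
using the intersection description for the first inclusion and Proposition~3.5 for the equality.

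The only conceivable obstacle is whether one truly gets to quote Proposition~3.5 as a black box: that proposition was proved via the map $\pi$ onto the Calkin algebra, using rigidity of the injective envelope to conclude that $\pi$ is a complete isometry on every copy of $I(\cl A)$. Since nothing in its statement or proof needs to be revisited, the present proposition should be stated as a short consequence, perhaps with a one-line pointer back to Propositions~4.3 and~3.5, rather than any new computation.
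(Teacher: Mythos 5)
Your argument is exactly the paper's: the paper's proof is the one-line ``Apply Proposition~3.4,'' meaning precisely the combination of the fact that every copy $\cl S$ of $I(\cl A)$ satisfies $\cl S \cap \cl K(\cl H) = (0)$ under the hypothesis $\cl A \cap \cl K(\cl H) = (0)$ with the identification of $\cl F(\cl A)$ as the intersection of all such copies. Your write-up just makes the implicit appeal to the intersection description explicit, which is fine.
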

\begin{proof} Apply Proposition~3.4.
\end{proof}

The next result shows that many of the various notions of multipliers that can be associated with $\cl A$ are in $\cl F(\cl A).$

\begin{defn} Let $\cl A \subseteq B(\cl H)$ be a unital
  $C^*$-subalgebra. An operator $T \in B(\cl H)$ is called a {\bf
    local left multiplier of $\cl A,$} provided that there exists a
  two sided ideal, $\cl J \lhd \cl A$ of $\cl A$ that acts non-degenerately on $\cl H,$ such that $T \cdot \cl J \subseteq \cl A.$
Similarly, $T$ is called a {\bf local right multiplier of $\cl A$} if there exists such an ideal with $\cl J \cdot T \subseteq \cl A,$ and a {\bf local quasi-multiplier of $\cl A,$} provided that there is such an ideal with $\cl J \cdot T \cdot \cl J \subseteq \cl A.$  These sets of operators are denoted $\cl L \cl M_{loc}(\cl A), \cl R \cl M_{loc}(\cl A)$ and $\cl Q \cl M_{loc}(\cl A),$ respectively.
\end{defn}

It is fairly easily checked that each of these sets of operators is a vector space and that $\cl L \cl M_{loc}(\cl A) \cup \cl R \cl M_{loc}(\cl A) \subseteq \cl Q \cl M_{loc}(\cl A).$ Moreover, $T \in \cl Q \cl M_{loc}(\cl A)$ if and only if $Re(T), Im(T) \in \cl Q \cl M_{loc}(\cl A).$

\begin{prop} Let $\cl A \subseteq B(\cl H)$ be a unital
  $C^*$-subalgebra. \\
Then $\cl Q \cl M_{loc}(\cl A) \subseteq \cl F(\cl A).$
\end{prop}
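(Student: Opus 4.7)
The plan is to exploit the fact that any $\cl A$-map $\phi: B(\cl H) \to B(\cl H)$ automatically has $\cl A$ in its multiplicative domain, so it is an $\cl A$-bimodule map, and then use the defining ideal of a local quasi-multiplier together with non-degeneracy to bootstrap the bimodule identity into equality on $T$ itself.

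First, I would observe that any $\cl A$-map $\phi$ is unital, since $I \in \cl A$, and completely contractive. A unital completely contractive map between unital $C^*$-algebras is completely positive, so $\phi$ is UCP. Next, I would invoke the standard multiplicative domain result: since $\phi(a^*a) = a^*a = \phi(a)^*\phi(a)$ and $\phi(aa^*) = aa^* = \phi(a)\phi(a)^*$ for every $a \in \cl A$, the subalgebra $\cl A$ lies in the multiplicative domain of $\phi$. Consequently,
\[
\phi(aTb) = a\,\phi(T)\,b \quad \text{for all } a,b \in \cl A \text{ and all } T \in B(\cl H).
\]

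Now let $T \in \cl Q\cl M_{\mathrm{loc}}(\cl A)$ and pick a two-sided ideal $\cl J \lhd \cl A$ acting non-degenerately on $\cl H$ with $\cl J \cdot T \cdot \cl J \subseteq \cl A$. Fix $j_1, j_2 \in \cl J$. Then $j_1 T j_2 \in \cl A$, so $\phi$ fixes it; combining this with the bimodule identity yields
\[
j_1 T j_2 = \phi(j_1 T j_2) = j_1\,\phi(T)\,j_2,
\]
i.e.\ $j_1(T - \phi(T))j_2 = 0$ for every $j_1, j_2 \in \cl J$.

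To finish I would use an approximate identity $\{e_\lambda\} \subset \cl J$. Since $\cl J$ acts non-degenerately, $e_\lambda \to I$ strongly on $\cl H$. Setting $S = T - \phi(T)$, the previous identity gives $e_\lambda S e_\mu = 0$ for all $\lambda, \mu$. Fix $\mu$ and a vector $\xi \in \cl H$: since $e_\lambda \to I$ strongly, $e_\lambda(Se_\mu \xi) \to Se_\mu\xi$, hence $Se_\mu\xi = 0$, so $Se_\mu = 0$ for every $\mu$. Letting $\mu$ vary and using boundedness of $S$ together with $e_\mu \xi \to \xi$ strongly, we obtain $S\xi = \lim_\mu Se_\mu\xi = 0$, and therefore $\phi(T) = T$.

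I don't anticipate any serious obstacle: the whole argument is a compression of the multiplicative domain trick combined with an approximate-identity limiting step, and each ingredient is routine. The only potentially delicate point is justifying that the strong limit can be taken twice (once on each side) to pass from $\cl J S \cl J = 0$ to $S = 0$, which is immediate from boundedness of $S$ and the non-degeneracy hypothesis on $\cl J$.
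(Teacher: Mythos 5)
Your proof is correct and follows essentially the same route as the paper: Choi's multiplicative domain argument to get the $\cl A$-bimodule property of $\phi$, the identity $j_1(T-\phi(T))j_2=0$ from $\cl J\cdot T\cdot\cl J\subseteq\cl A$, and a two-sided strong limit along an approximate identity of the non-degenerate ideal $\cl J$. The only difference is the (immaterial) order in which the two limits are taken.
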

\begin{proof} Let $T \in \cl Q \cl M_{loc}(\cl A)$ and let $\cl J \lhd \cl A$ be a two-sided ideal that acts non-degenerately on $\cl H,$ with $\cl J \cdot T \cdot \cl J \subseteq \cl A.$ We wish to show that $T \in \cl F(\cl A). $ 

Since $\cl J$ acts non-degenerately, there is an increasing, positive, approximate identity $\{e_{\alpha} \}$ for $\cl J$ that tends strongly to $I.$  Let $\phi$ be any $\cl A$-map, so that $\phi$ is a unital completely positive map that fixes $\cl A.$  

By Choi's theory of multiplicative domains \cite{Ch}, $\phi$ is an $\cl A$-bimodule map. 
Hence, for each $\alpha$ and $\beta,$ $(e_{\alpha}T - e_{\alpha}\phi(T))e_{\beta} = 0.$
Using that $e_{\beta}$ tends strongly to $I$ yields that $e_{\alpha}T
= e_{\alpha} \phi(T),$ for each $\alpha.$ Now using that $e_{\alpha}$
tends strongly to $I,$ yields $T = \phi(T).$ Thus, $T \in \cl F(\cl A)$ as was to be shown.
\end{proof}

We now show that $\cl F(\cl A)$ is in a certain sense an order completion of $\cl A.$

\begin{defn}
Let $T = T^* \in B(\cl H)$, then we set $(- \infty, T]_{\cl A} = 
\{ A=A^* \in \cl A : A \le T \}$
and set $[T, + \infty)_{\cl A} = \{ A=A^* \in \cl A : T \le A 
\}$.
For $R= R^*$, we write $(-\infty, T]_{\cl A} \le R$ provided 
that $A \le R$ for every 
$A \in (- \infty, T]_{\cl A}$ and define $R \le [T, 
+\infty)_{\cl A}$, similarly.
We say that $T$ is {\bf order determined by $\cl A$}, if 
$(-\infty, T]_{\cl A} \le R \le [T, +\infty)_{\cl A}$,
implies that $T=R$.

We say that $T$ is {\bf matricially order determined by $\cl A$} 
provided that
$(-\infty, T \otimes H]_{M_n(\cl A)} \le R \otimes H \le [T 
\otimes H, +\infty)_{M_n(\cl A)}$
for every $n$ and every $H=H^* \in M_n$, implies that $R=T$.
\end{defn}

Note that the set of order determined elements is a subset of 
the matricially order determined elements.

\begin{prop}
Let $\cl A \subseteq B(\cl H)$ be a unital $C^*$-subalgebra and let $T=T^* \in B(\cl H),$ then $T \in \cl F(\cl A)$ if and only 
if $T$ is matricially 
order determined by $\cl A.$
\end{prop}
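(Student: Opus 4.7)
The plan is to reduce both directions to the following dictionary: an $\cl A$-map $\phi$ with $\phi(T) = R$ exists if and only if the linear extension $A + \lambda T \mapsto A + \lambda R$ is completely positive on the operator system $\cl S := \cl A + \bb C T$, and this complete positivity is literally the matricial order condition. Applying Arveson's extension theorem to $B(\cl H)$ then converts completely positive maps on $\cl S$ into $\cl A$-maps.

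For the forward direction, assume $T \in \cl F(\cl A)$ and suppose $R=R^*$ satisfies the matricial order condition. The case $T \in \cl A$ is trivial (take $n=1$, $H=1$), so assume $T \notin \cl A$. Then $\cl S = \cl A \oplus \bb C T$ as a vector space and every self-adjoint $X \in M_n(\cl S)$ has a unique decomposition $X = A + T \otimes H$ with $A = A^* \in M_n(\cl A)$ and $H = H^* \in M_n$. The condition $X \ge 0$ is exactly $-A \le T \otimes H$, i.e., $-A \in (-\infty, T \otimes H]_{M_n(\cl A)}$, and the hypothesis then gives $-A \le R \otimes H$, so $\phi_0^{(n)}(X) := A + R \otimes H \ge 0$. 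Thus the unital map $\phi_0(A + \lambda T) := A + \lambda R$ is completely positive on $\cl S$. Arveson extends $\phi_0$ to a u.c.p. map $\phi : B(\cl H) \to B(\cl H)$, which is an $\cl A$-map. Since $T \in \cl F(\cl A)$, $R = \phi_0(T) = \phi(T) = T$.

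For the converse, assume $T$ is matricially order determined and let $\phi$ be any $\cl A$-map. Because $\phi$ fixes the unit, it is unital, hence u.c.p., and $R := \phi(T)$ is self-adjoint. Now for any $A = A^* \in M_n(\cl A)$ and $H = H^* \in M_n$ with $-A \le T \otimes H$, the element $A + T \otimes H \ge 0$ lies in $M_n(B(\cl H))_+$; applying $\phi^{(n)}$ and using that $\phi$ fixes $A$ yields $A + R \otimes H \ge 0$, i.e., $-A \le R \otimes H$. The reverse inequality is identical, so $R$ satisfies the matricial order condition and the hypothesis forces $R = T$, i.e., $\phi(T) = T$. Since $\phi$ was arbitrary, $T \in \cl F(\cl A)$.

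The only subtle point is the self-adjoint parametrization of $M_n(\cl S)$ needed in the forward direction; the rest is a direct translation between positivity of $A + T \otimes H$ and membership in the order intervals $(-\infty, T \otimes H]_{M_n(\cl A)}$, together with Arveson's extension theorem to land inside $B(\cl H)$ rather than $\cl S$.
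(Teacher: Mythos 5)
Your proof is correct and follows essentially the same route as the paper: one direction checks that any $\cl A$-map $\phi$ forces $\phi(T)$ to satisfy the matricial order inequalities, and the other builds the completely positive map $A+\lambda T\mapsto A+\lambda R$ on the operator system spanned by $\cl A$ and $T$ and extends it to $B(\cl H)$. You simply supply more detail than the paper (the self-adjoint parametrization of $M_n(\cl A + \bb C T)$ and the trivial case $T\in\cl A$), which is fine.
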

\begin{proof}
Let $\phi$ be any $\cl A$-map. It is easily seen that
$(-\infty, T \otimes H]_{M_n(\cl A)} \le \phi(T) \otimes H \le 
[T \otimes H, +\infty)_{M_n(\cl A)}$
for every $n$ and every $H=H^* \in M_n$. Hence, if $T$ is 
matricially order determined, then $\phi(T) = T$
and so $T \in \cl F(\cl A).$

Conversely, assume that $T \in \cl F(\cl A)$ and assume that 
$R=R^*$ satisfies the inequalities in the
definition. These inequalities imply that there is a 
well-defined completely positive map, $\phi$,
satisfying, $\phi(A + \lambda T) = A + \lambda R$, from the 
operator system spanned by $\cl A$ and
$T$ onto the operator system spanned by $\cl A$ and $R$. This 
completely positive map can then
be extended to a completely positive map on all of $B(\cl H)$, 
and hence $R= \phi (T) = T.$
\end{proof}

Maitland Wright\cite{MW1, MW2, MW3} and
Hamana\cite{H4} studied several different monotone completions of a C*-algebra.  In spite of the above characterization of $\cl F(\cl A),$ we have been unable to develop any relationship between $\cl F(\cl A)$ and those other completions.

\begin{remark} The set $\cl F(\cl A)$ is not generally a C*-subalgebra of $B(\cl H)$ as we will show below.
\end{remark}

We now wish to recall another construction of Hamana's\cite{H4}. Given a concrete operator space, $V \subset B(\cl H)$, we have a new operator space $\widehat{V} \subseteq B(\cl H \otimes \ell^2)$ defined by as follows.  Every $T \in B(\cl H \otimes \ell^2)$ has the form $T=(T_{i,j}),$ with $T_{i,j} \in B(\cl H).$  We set $\widehat{V}= \{ (T_{i,j}) \in B(\cl H \otimes \ell^2) : T_{i,j} \in V \}.$
This is Hamana's  {\em Fubini product} of $V$ with $B(\ell^2).$

If $\phi: B(\cl H) \to B(\cl H)$ is completely bounded, then $\widehat{\phi}: B(\cl H \otimes \ell^2) \to B(\cl H \otimes \ell^2)$, defined by $\widehat{\phi}((T_{i,j}))= (\phi(T_{i,j}))$ is also completely bounded with $\|\phi\|_{cb}= \| \widehat{\phi}\|_{cb}.$ Hence, if $V$ is injective and $\phi: B(\cl H) \to V$ is a completely contractive projection onto $V$, then $\widehat{\phi}: B(\cl H \otimes \ell^2) \to \widehat{V}$ is a completely contractive projection onto $\widehat{V}$ and so $\widehat{V}$ is also injective.

We now wish to define another operator space associated with
$V$. First, let $\ell^{\infty}(V)$ denote the subset of $\widehat{V}$
consisting of diagonal matrices with entries from $V$ and let $V
\otimes \cl K(\ell^2) \subseteq \widehat{V}$ denote the tensor product
of $V$ and the compact opertors on $\ell^2, \cl K(\ell^2).$ Finally,
we let $ \widetilde{V}= \ell^{\infty}(V) + V \otimes \cl K(\ell^2).$

\begin{prop} Let $\cl A \subseteq B(\cl H)$ be a unital, C*-subalgebra. Then $\widetilde{\cl A} \subseteq B(\cl H \otimes \ell^2)$ is a C*-subalgebra. If $\Psi: B(\cl H \otimes \ell^2) \to B(\cl H \otimes \ell^2)$ is a unital, completely positive map that fixes $\widetilde{\cl A}$,
then there exists, $\phi: B(\cl H) \to B(\cl H)$ that fixes $\cl A$ such that $\Psi= \widehat{\phi}.$
\end{prop}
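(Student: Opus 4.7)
The plan has two parts, corresponding to the two assertions. For the first assertion, that $\widetilde{\cl A}$ is a C*-subalgebra of $B(\cl H \otimes \ell^2),$ I would first verify by direct matrix computation that $\widetilde{\cl A}$ is a *-subalgebra: a diagonal operator with entries in $\cl A$ times a finitely-supported matrix with entries in $\cl A$ is again such a finitely-supported matrix, so $\ell^\infty(\cl A) \cdot (\cl A \otimes \cl K(\ell^2)) \subseteq \cl A \otimes \cl K(\ell^2),$ and by taking norm limits the same holds with $\cl A \otimes \cl K(\ell^2)$ in place of its dense subalgebra. Adjoints are immediate. This shows $\cl A \otimes \cl K(\ell^2)$ is a closed two-sided ideal inside the *-algebra $\widetilde{\cl A},$ so I can appeal to the standard C*-algebra fact that $\cl B + \cl J$ is norm-closed whenever $\cl B$ is a C*-subalgebra, $\cl J$ is a closed ideal of a surrounding C*-algebra, and $\cl B \cdot \cl J + \cl J \cdot \cl B \subseteq \cl J.$

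For the second assertion, the main tool is Choi's multiplicative domain. Since $\Psi$ is unital and completely positive and restricts to the identity on the C*-algebra $\widetilde{\cl A},$ every element of $\widetilde{\cl A}$ lies in the multiplicative domain of $\Psi,$ hence $\Psi(aXb) = a\,\Psi(X)\,b$ for all $a,b \in \widetilde{\cl A}$ and all $X \in B(\cl H \otimes \ell^2).$ Applying this with the matrix units $1 \otimes e_{ij}$ (which lie in $\cl A \otimes \cl K(\ell^2) \subseteq \widetilde{\cl A}$) and writing $X = (X_{ij})$ with $X_{ij} \in B(\cl H),$ the identity $(1 \otimes e_{ii}) X (1 \otimes e_{jj}) = X_{ij} \otimes e_{ij}$ yields
\[
\Psi(X_{ij} \otimes e_{ij}) = (1 \otimes e_{ii})\,\Psi(X)\,(1 \otimes e_{jj}) = \Psi(X)_{ij} \otimes e_{ij}.
\]

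Now I would define $\phi : B(\cl H) \to B(\cl H)$ by letting $\phi(Y)$ be the unique element of $B(\cl H)$ such that $\Psi(Y \otimes e_{11}) = \phi(Y) \otimes e_{11}$; this makes sense because compression by $1 \otimes e_{11}$ (which is in the multiplicative domain) sends $\Psi(Y \otimes e_{11})$ to itself. Conjugating by $1 \otimes e_{i1}$ and $1 \otimes e_{1j}$ and again invoking the multiplicative domain shows $\Psi(Y \otimes e_{ij}) = \phi(Y) \otimes e_{ij}$ for all $i,j.$ Combined with the displayed formula this forces $\Psi(X)_{ij} = \phi(X_{ij})$ for every $X \in B(\cl H \otimes \ell^2),$ that is, $\Psi = \widehat{\phi}.$ Finally, for $A \in \cl A$ the element $A \otimes e_{11}$ is in $\cl A \otimes \cl K(\ell^2)$ and is therefore fixed by $\Psi,$ giving $\phi(A) \otimes e_{11} = A \otimes e_{11}$ and hence $\phi(A) = A.$

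I expect the only real subtlety to be the closedness of $\widetilde{\cl A};$ after that is done, the rest is essentially a bookkeeping exercise built on Choi's multiplicative-domain theorem together with the observation that $\widetilde{\cl A}$ contains every matrix unit $1 \otimes e_{ij},$ which is what allows one to recover the entire matrix structure of $\Psi$ from its action on a single corner.
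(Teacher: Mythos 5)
Your proposal is correct and follows essentially the same route as the paper: the paper likewise observes that $\Psi$ fixes the scalar subalgebra $\ell^{\infty}(\bb C \cdot I) + (\bb C \cdot I \otimes \cl K(\ell^2))$, hence is a bimodule map over it (equivalently, over the matrix units $1 \otimes e_{ij}$) by Choi's multiplicative domain theorem, and reads off $\Psi = \widehat{\phi}$ entrywise. You merely supply the bookkeeping the paper compresses into ``the result now follows easily,'' together with the closedness of $\widetilde{\cl A}$, which the paper asserts without proof.
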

\begin{proof} Since $\Psi$ fixes the C*-algebra, $\cl C = \ell^{\infty}(\bb C
  \cdot I) + ( \bb C \cdot I \otimes \cl K(\ell^2))$, we have that
  $\Psi$ must be a $\cl C$-bimodule map and hence, must be of the form, $\Psi = \widehat{\phi}$ for some map $\phi$.
The result now follows easily.
\end{proof}

\begin{thm} Let $\cl A \subseteq B(\cl H)$ be a unital C*-subalgebra. Then $\Psi \in \cl E(\widetilde{\cl A})$ if and only if $\Psi= \widehat{\phi}$ for some $\phi \in \cl E(\cl A)$ and $\cl F(\widetilde{\cl A}) = \widehat{\cl F(\cl A)}.$
\end{thm}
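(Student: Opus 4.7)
The plan is to use Proposition~4.12 to set up a composition-preserving bijection $\phi \leftrightarrow \widehat{\phi}$ between $\cl A$-maps on $B(\cl H)$ and $\widetilde{\cl A}$-maps on $B(\cl H \otimes \ell^2),$ and then to transport minimality across this bijection using the partial-order characterization of $\cl E$ given in Theorem~2.9.

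First I would verify that $\phi \mapsto \widehat{\phi}$ is a bijection from $\cl A$-maps onto $\widetilde{\cl A}$-maps. Surjectivity is exactly Proposition~4.12, once one notes that any $\widetilde{\cl A}$-map is automatically unital (since $I \in \widetilde{\cl A}$) and that a unital complete contraction into a C*-algebra is completely positive. Injectivity is immediate, since restricting to the $(1,1)$-entry recovers $\phi$ from $\widehat{\phi}$ via the identity $\widehat{\phi}(T \otimes e_{11}) = \phi(T) \otimes e_{11}.$ Conversely, any $\cl A$-map $\phi$ yields a complete contraction $\widehat{\phi}$ that fixes $\widetilde{\cl A}$ entrywise. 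The composition identity $\widehat{\phi_1} \circ \widehat{\phi_2} = \widehat{\phi_1 \circ \phi_2}$ is immediate from the entrywise definition, so the bijection sends projections to projections and preserves Hamana's partial order $E \preceq F \iff E \circ F = F \circ E = E$ on projections.

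For the first assertion, since both $B(\cl H)$ and $B(\cl H \otimes \ell^2)$ are dual spaces, they are admissable, so Theorem~2.9 applies in both settings: $\phi \in \cl E(\cl A)$ if and only if $\phi$ is minimal in the partial order on $\cl A$-projections, and $\Psi \in \cl E(\widetilde{\cl A})$ if and only if $\Psi$ is minimal in the partial order on $\widetilde{\cl A}$-projections. The order-preserving bijection then yields the asserted equivalence. For the second assertion, an element $(T_{ij}) \in B(\cl H \otimes \ell^2)$ lies in $\cl F(\widetilde{\cl A})$ precisely when $\widehat{\phi}((T_{ij})) = (T_{ij})$ for every $\cl A$-map $\phi,$ and entrywise this says $\phi(T_{ij}) = T_{ij}$ for every such $\phi$ and every $i,j,$ which is exactly the condition that each $T_{ij}$ lies in $\cl F(\cl A),$ that is, $(T_{ij}) \in \widehat{\cl F(\cl A)}.$

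The main subtlety is routing everything through the partial-order characterization of $\cl E$ rather than the seminorm characterization. An attempt to work directly with $p_{\widehat{\phi}}$ would be awkward because inequalities between $\cl A$-seminorms on $B(\cl H)$ do not obviously lift to inequalities between the corresponding $\widetilde{\cl A}$-seminorms on matrices; once one uses the equivalent formulation via minimality in the partial order on projections, the order-preservation of the bijection is automatic and the theorem follows almost formally from Proposition~4.12.
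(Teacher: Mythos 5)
Your proof is correct and follows essentially the same route as the paper: both assertions are deduced from Proposition~4.12, which identifies every $\widetilde{\cl A}$-map on $B(\cl H \otimes \ell^2)$ as $\widehat{\phi}$ for some $\cl A$-map $\phi$. The paper's own proof is much terser --- it leaves the transfer of minimality implicit and obtains the second claim from the intersection-of-ranges description of $\cl F(\cdot)$ rather than directly from the fixed-point definition --- but your explicit routing through the partial-order characterization of Theorem~2.9 is exactly the right way to fill that gap, since, as you observe, seminorm inequalities on $B(\cl H)$ do not obviously pass to the amplified seminorms.
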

\begin{proof} The fact that $\Psi= \widehat{\phi}$ for $\phi \in \cl E(\cl A)$ follows from the above Proposition. Note that in this case, $\cl R(\Psi)= \widehat{\cl R(\phi)},$ and the result follows from the characterization of $\cl F( \cdot)$ as the intersection of all ranges.
\end{proof}

\begin{cor} Let $\cl A \subseteq B(\cl H)$ be a unital injective C*-subalgebra.
Then $\widetilde{\cl A} \subseteq B(\cl H \otimes \ell^2)$ is not injective, and $\cl F(\widetilde{\cl A}) =I(\widetilde{\cl A})= \widehat{\cl A} \subseteq B(\cl H \otimes \ell^2)$ is the unique copy of its injective envelope.
\end{cor}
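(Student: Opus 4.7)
My plan is to reduce the entire statement to the corresponding analysis for $\cl A$ by invoking Theorem~4.13. The first step is to show that under the injectivity hypothesis on $\cl A$ we have $\cl F(\cl A) = \cl A$, and in fact $\cl A$ is the only copy of $I(\cl A)$ inside $B(\cl H)$. Fix a completely contractive projection $E_0 \colon B(\cl H) \to \cl A$ provided by the injectivity of $\cl A$. Then $\cl A$ is trivially both $W$-rigid and $W$-injective over itself, so $E_0 \in \cl E(\cl A)$. For any other $F \in \cl E(\cl A)$ with range $Y$, the $\cl A$-map $E_0$ satisfies $E_0(Y) \subseteq \cl A \subseteq Y$, so by the $W$-rigidity of $Y$ it restricts to the identity on $Y$; but $E_0(Y) \subseteq \cl A$, forcing $Y \subseteq \cl A$ and hence $Y = \cl A$. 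Proposition~4.3 then yields $\cl F(\cl A) = \bigcap_{\cl S} \cl S = \cl A$.

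Next, Theorem~4.13 immediately gives $\cl F(\widetilde{\cl A}) = \widehat{\cl F(\cl A)} = \widehat{\cl A}$, and asserts that every $\Psi \in \cl E(\widetilde{\cl A})$ is of the form $\Psi = \widehat{\phi}$ with $\phi \in \cl E(\cl A)$. From the first step, $\cl R(\phi) = \cl A$, so $\cl R(\Psi) = \widehat{\cl A}$ for every minimal $\widetilde{\cl A}$-projection. This simultaneously yields that $\widehat{\cl A}$ is the unique copy of $I(\widetilde{\cl A})$ sitting in $B(\cl H \otimes \ell^2)$, and, since $\widehat{\cl A}$ is already known (in the paragraph preceding Proposition~4.12) to be injective when $\cl A$ is, that $I(\widetilde{\cl A}) = \widehat{\cl A}$.

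To finish I need to verify that $\widetilde{\cl A}$ is not itself injective, which now amounts to showing $\widetilde{\cl A} \subsetneq \widehat{\cl A}$: taking any nonzero $a \in \cl A$ and letting $S$ be the unilateral shift on $\ell^2$, the operator $a \otimes S$ lies in $\widehat{\cl A}$ but not in $\ell^{\infty}(\cl A) + \cl A \otimes \cl K(\ell^2)$, because $S$ is neither diagonal nor a diagonal perturbation of a compact operator. Hence $\widetilde{\cl A} \subsetneq I(\widetilde{\cl A})$, and $\widetilde{\cl A}$ cannot be injective. The main obstacle I anticipate is the first step: carefully checking that the $W$-rigidity of $\cl A$ over itself, together with the existence of the projection $E_0$, forces \emph{every} copy of $I(\cl A)$ to equal $\cl A$ as a subspace of $B(\cl H)$, rather than merely being $W$-isomorphic to it. Once this is secured, Theorem~4.13 does essentially all of the remaining bookkeeping automatically.
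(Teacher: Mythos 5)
Your proof is correct and follows the route the paper intends: the corollary is stated there without proof, and your argument supplies exactly the missing steps --- that injectivity of $\cl A$ forces every copy of $I(\cl A)$ to coincide with $\cl A$ (via the rigidity argument, hence $\cl F(\cl A)=\cl A$), after which Theorem~4.13 together with the previously noted injectivity of $\widehat{\cl A}$ yields everything except the strict inclusion $\widetilde{\cl A} \subsetneq \widehat{\cl A}$, which your shift example $a \otimes S$ settles. I see no gaps.
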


We now wish to show that it is possible to find an abelian, injective C*-subalgebra, $\cl A \subseteq B(\cl H)$ such that $\widehat{\cl A} \subseteq B(\cl H \otimes \ell^2)$ is not a C*-subalgebra.
First, we will need some preliminary results which might be of independent interest.

Let $X$ be a compact, Hausdorff space. If we identify $I(C(X))=C(Y)$ for some compact, Hausdorff space $Y,$ then the inclusion of $C(X)$ in $C(Y)$ is induced by a continuous, onto function, $p:Y \to X.$ Assume that $\{x_n \}$ is a countable, dense subset of $X$ and choose, $y_n \in Y$ such that $p(y_n)=x_n.$
In the following sequence of results we assume that this situation holds.

\begin{prop} Define $\pi: C(Y) \to \ell^{\infty}$ by $\pi(f)= (f(y_n)).$ Then $\pi$ is a one-to-one *-homomorphism and consequently, $\{ y_n \}$ is dense in $Y.$
\end{prop}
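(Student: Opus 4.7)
The map $\pi$ is a *-homomorphism essentially by inspection: evaluation at each point $y_n$ is a *-homomorphism $C(Y)\to\bb C$, and $\pi$ is just the product of these into $\ell^{\infty}$. The substantive content is the injectivity, and by Urysohn's lemma applied to the compact Hausdorff space $Y$, this is equivalent to $\{y_n\}$ being dense in $Y$. So the plan is to prove density, from which injectivity is immediate.

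To do so, I would set $Z=\overline{\{y_n\}}\subseteq Y$ and argue $Z=Y$ using the essential-extension property of $I(C(X))$. First, the continuous image $p(Z)$ is compact, hence closed in $X$, and it contains $\{p(y_n)\}=\{x_n\}$ which is dense in $X$; therefore $p(Z)=X$. Next, consider the restriction *-homomorphism
\[ r:C(Y)\to C(Z),\qquad r(f)=f|_Z. \]
Composing with the natural inclusion $\iota:C(X)\to C(Y)$, $\iota(g)=g\circ p$, one gets $r\circ\iota(g)=g\circ p|_Z$. Since $p|_Z:Z\to X$ is surjective,
\[ \|g\circ p|_Z\|_{\infty}=\sup_{z\in Z}|g(p(z))|=\sup_{x\in X}|g(x)|=\|g\|_{\infty}, \]
so $r\circ\iota$ is isometric; and because all spaces involved are commutative, it is automatically a complete isometry.

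Now the key step: because $C(Y)=I(C(X))$ is an essential extension of $C(X)$ in Hamana's sense, the completely contractive map $r:C(Y)\to C(Z)$ whose restriction to $C(X)$ is a complete isometry must itself be a complete isometry on $C(Y)$. In particular $r$ is injective. If $Z$ were a proper closed subset of $Y$, Urysohn's lemma would furnish a nonzero $f\in C(Y)$ with $f|_Z=0$, i.e., $r(f)=0$, contradicting injectivity. Therefore $Z=Y$, so $\{y_n\}$ is dense in $Y$, and consequently $\pi$ is one-to-one: any $f$ in the kernel vanishes on a dense subset of $Y$ and so is identically zero.

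The main obstacle is simply the invocation of essentiality in the correct form: we need the version that any completely contractive map out of $I(C(X))$ which is completely isometric on $C(X)$ is a complete isometry on all of $I(C(X))$, applied to a *-homomorphism into an external $C^*$-algebra $C(Z)$ rather than a self-map. Once that is in hand, the rest is routine point-set topology together with Urysohn.
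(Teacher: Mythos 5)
Your proof is correct and rests on exactly the same mechanism as the paper's: the essentiality of $C(Y)=I(C(X))$ over $C(X)$, combined with the observation that evaluation (or restriction) along the points lying over the dense set $\{x_n\}$ is already isometric on $C(X)$. The only difference is one of packaging --- the paper applies essentiality directly to $\pi$ itself, whose restriction to $C(X)$ is the isometric map $\rho(g)=(g(x_n))$, concluding that $\pi$ is isometric and hence that $\{y_n\}$ is dense, whereas you apply it to the restriction map onto $C\bigl(\overline{\{y_n\}}\bigr)$ and finish with Urysohn; these are the same argument in a different order.
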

\begin{proof} Clearly, $\pi$ is a *-homomorphism. Define $\rho: C(X) \to \ell^{\infty}$ by $\rho(f)= (f(x_n)).$ Since $\{ x_n \}$ is dense in $X, \rho$ is a one-to-one *-homomorphism. Also, since $p(y_n)=x_n, \pi$ is an extension of $\rho$ to $C(Y),$ i.e., $\rho= \pi \circ p^*.$

But since $C(Y)$ is an essential extension of $C(X)$, the fact that $\rho$ is isometric forces $\pi$ to be isometric and hence $\{ y_n \}$ must be a dense subset of $Y$. 
\end{proof}

The above proof gives one of the easiest proofs of the following result.

\begin{cor} Let $X$ and $Y$ be compact, Hausdorff spaces, such that $C(Y)$ is $C^*$-isomorphic to $I(C(X)).$  If $X$ is separable, then $Y$ is separable.
\end{cor}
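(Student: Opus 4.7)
The plan is to apply Proposition~4.15 directly, since its proof already does all of the work. First I would use the hypothesis that $X$ is separable to fix a countable dense subset $\{ x_n \} \subseteq X.$ Via the given $C^*$-isomorphism, identify $I(C(X)) = C(Y),$ and let $p : Y \to X$ be the continuous surjection induced by the inclusion $C(X) \hookrightarrow C(Y).$

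Next I would use surjectivity of $p$ to pick, for each $n,$ a point $y_n \in Y$ with $p(y_n) = x_n.$ This puts us in exactly the setup preceding Proposition~4.15, so Proposition~4.15 applies and yields that $\{ y_n \}$ is dense in $Y.$ Since this is a countable dense subset, $Y$ is separable.

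There is no real obstacle: the content of the argument is entirely packaged in Proposition~4.15, and the corollary is just the observation that the existence of a countable dense subset of $Y$ was the conclusion of that proposition's proof.
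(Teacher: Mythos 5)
Your proposal is correct and is exactly the paper's own argument: the paper introduces the corollary with ``The above proof gives one of the easiest proofs of the following result,'' meaning the countable dense set $\{y_n\}\subseteq Y$ produced in the preceding proposition (via lifting a countable dense $\{x_n\}\subseteq X$ through the surjection $p:Y\to X$ and using essentiality of $C(Y)$ over $C(X)$) is the whole content. Nothing is missing.
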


Now let $X= [0,1]$ and let $\{x_n \}$ be a dense subset as above of distinct points and for convenience we let $x_1=1.$ Note that $\rho(C([0,1])) \cap c_0 = (0),$ for if $\rho(f) \in c_0,$ then given any $x \in [0,1]$ we could choose a subsequence, $\{x_{n_k} \}$ with $\lim_k x_{n_k} = x$ and hence $f(x) = \lim_k f(x_{n_k}) =0.$

Thus, if we let $\tilde{\rho}: C([0,1]) \to \ell^{\infty}/c_0$ denote the composition of $\rho$ with the quotient map, then $\tilde{\rho}$ is still one-to-one and hence an isometry. Thus, again by the fact that $C(Y)$ is an essential extension of $C([0,1])$, we have that the composition of $\pi$ with the quotient map, $\tilde{\pi}: C(Y) \to \ell^{\infty}/c_0$ is also an isometry and hence one-to-one.

\begin{lemma}\label{B} There exists an injective, C*-subalgebra, $\cl B$ of $\ell^{\infty}$ such that $\cl B \cap c_0 = (0), \cl B$ is wk*-dense in $\ell^{\infty}$ and for every $n$, there exists a  strictly positive element, $b_n \in \cl B$ such that $\lim _m b_n^m = \delta_n,$ pointwise, where $\delta_n$ is the function that is 1 at $n$ and 0 elsewhere.
\end{lemma}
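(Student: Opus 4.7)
The plan is to take $\cl B = \pi(C(Y))$ itself, where $\pi: C(Y) \to \ell^{\infty}$ is the *-homomorphism constructed just above the lemma. Since $\pi$ was shown to be one-to-one in Proposition~4.14, $\cl B$ is *-isomorphic to $C(Y) = I(C([0,1]))$, and the latter is an injective C*-algebra by definition of the injective envelope; so $\cl B$ is injective as a C*-algebra. The condition $\cl B \cap c_0 = (0)$ is then immediate from the preceding paragraph: if $\pi(f) \in c_0$ then $\tilde{\pi}(f) = 0$, and since $\tilde{\pi}$ is injective by the argument given there, $f = 0$ and hence $\pi(f) = 0$.

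For the strictly positive elements $b_n$, the natural move is to exploit the subalgebra $\rho(C([0,1])) = \pi(p^*(C([0,1]))) \subseteq \cl B$. For each $n$, I would choose $g_n \in C([0,1])$ strictly positive, with $g_n(x_n) = 1$ and $g_n(x_k) < 1$ for all $k \ne n$; the explicit choice $g_n(x) = 1 - \tfrac{1}{2}|x - x_n|$ works (it lies in $[\tfrac12,1]$, is 1 at $x_n$, and is strictly less than $1$ at every other $x_k$ because the $x_k$ were chosen distinct). Setting $b_n := \rho(g_n) \in \cl B$, each coordinate satisfies $b_n(k) = g_n(x_k) \in (0,1]$ with equality only at $k=n$, so $b_n$ is strictly positive and $b_n^m(k) = g_n(x_k)^m$ converges to $\delta_n(k)$ pointwise as $m \to \infty$.

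Finally, the wk*-density of $\cl B$ in $\ell^{\infty}$ falls out of the previous step: the wk*-closure of $\cl B$ contains $\delta_n$ for every $n$, hence it contains the linear span $c_{00}$. Any $\varphi \in \ell^1$ annihilating $c_{00}$ must be the zero sequence, so $c_{00}$ is already wk*-dense in $\ell^{\infty}$, and therefore so is $\cl B$.

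The steps are all essentially unpacking the construction of $Y$ and $\pi$ that was developed in the lead-up to the lemma, and I do not expect any substantial obstacle. The only spot that requires attention is the construction of $b_n$: one must ensure simultaneously that $b_n$ is coordinatewise strictly positive, lies in $\cl B$, and satisfies $b_n^m \to \delta_n$ pointwise, which is why it is convenient to work with pullbacks from $C([0,1])$ rather than trying to produce functions on $Y$ directly, and why the distinctness of the dense sequence $\{x_n\}$ is used in an essential way.
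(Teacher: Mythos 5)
Your proposal is correct and follows the paper's proof essentially verbatim: take $\cl B = \pi(C(Y))$, get $\cl B \cap c_0 = (0)$ from the injectivity of $\tilde{\pi}$ established just before the lemma, build each $b_n$ as $\rho(g_n)$ for a continuous $g_n$ on $[0,1]$ peaking only at $x_n$, and deduce wk*-density from the $\delta_n$'s lying in the wk*-closure. The only difference is that you spell out the explicit function $g_n(x) = 1 - \tfrac12|x-x_n|$ and the $c_{00}$-density step, which the paper leaves implicit.
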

\begin{proof} Let $\cl B = \pi(C(Y))$ where $C(Y)$ is the injective envelope of $C([0,1])$ as above. Then $\cl B$ is injective and as shown above, $\cl B \cap c_0 = (0).$

Next, for each point, $x_n \in [0,1]$ choose a strictly positive continuous function, $f_n$ such that, $1=f_n(x_n) >f_n(x) \ge 1/2$ for any $x \ne x_n$ and let $b_n= \rho(f_n).$

The existence of such $b_n$ shows that each $\delta_n$ is in the wk*-closure of $\cl B$ and hence, $\cl B$ is wk*-dense.
\end{proof}

The key to the next result is that products in $\widehat{\cl B}$ involve strong convergence of sums while injective C*-subalgebras need not be closed in the strong operator topology.

\begin{thm} Let $\cl B \subseteq \ell^{\infty} \subseteq B(\ell^2)$ be represented as diagonal operators be the C*-algebra of Lemma~\ref{B}.
Then $\cl F(\widetilde{\cl B})= I(\widetilde{\cl B}) = \widehat{\cl B}$ is not a C*-subalgebra of $B(\cl H \otimes \ell^2).$
\end{thm}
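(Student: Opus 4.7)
The plan is to dispose of the equalities $\cl F(\widetilde{\cl B}) = I(\widetilde{\cl B}) = \widehat{\cl B}$ by direct appeal to the preceding corollary, which applies since $\cl B$ is injective. The remaining substantive claim is that $\widehat{\cl B}$ is not closed under the operator product inherited from $B(\cl H \otimes \ell^2)$. I will produce explicit $R, C \in \widehat{\cl B}$ of ``row'' and ``column'' shape (only the first block-row of $R$ and the first block-column of $C$ are nonzero), so that $RC$ has a single nonzero block entry, $(RC)_{1,1}$, equal to a strong-operator-convergent sum of products of elements of $\cl B$, and I will arrange this sum to land outside $\cl B$.

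First I fix the target element $d := 1 - \delta_1 \in \ell^{\infty}$ and verify $d \notin \cl B$. If $d = \pi(f)$ for some $f \in C(Y)$, then $f(y_n) = 1$ for every $n \geq 2$. By the proposition preceding Lemma~\ref{B}, $\{y_n\}_{n \geq 1}$ is dense in $Y$, and $y_1$ cannot be isolated in $Y$ (otherwise its characteristic function would push down under $\pi$ to $\delta_1 \in \cl B$, contradicting $\cl B \cap c_0 = (0)$); hence $\{y_n\}_{n \geq 2}$ is still dense in $Y$. Continuity then forces $f \equiv 1$, contradicting $f(y_1) = 0$.

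Next, I telescope $d$ using the elements supplied by Lemma~\ref{B}: set $c_m := b_1^{m-1} - b_1^m = b_1^{m-1}(1 - b_1) \in \cl B_+$, so that the partial sums $\sum_{m=1}^M c_m = 1 - b_1^M$ converge pointwise (equivalently, strongly) to $d$. Functional calculus in the C*-algebra $\cl B$ supplies $c_m^{1/2} \in \cl B$. I define $R, C \in B(\cl H \otimes \ell^2)$ by $R_{1,m} = C_{m,1} := c_m^{1/2}$ with all other block entries zero. The standard row/column norm identities give $\|R\|^2 = \|C\|^2 = \|\sum_m c_m\| = \|d\|_\infty = 1$, so both operators are bounded, and since every block entry lies in $\cl B$ we conclude $R, C \in \widehat{\cl B}$. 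A direct block computation then yields $(RC)_{1,1} = \sum_m c_m^{1/2} c_m^{1/2} = \sum_m c_m = d$, with the series converging strongly in $B(\ell^2)$, while all other block entries of $RC$ vanish. Since $d \notin \cl B$, we obtain $RC \notin \widehat{\cl B}$.

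The main technical point is that realizing a fixed $d \in \ell^{\infty} \setminus \cl B$ as a single matrix coefficient of a product $RC$ with $R, C \in \widehat{\cl B}$ demands an SOT-convergent decomposition $d = \sum_m r_m c_m$ for which \emph{both} $\sum_m r_m r_m^*$ and $\sum_m c_m^* c_m$ are strongly bounded. The symmetric choice $r_m = c_m = c_m^{1/2}$, which is only available because the telescoped terms $c_m$ are positive and $\cl B$ is closed under the continuous functional calculus, handles both boundedness requirements simultaneously with the single bound $\|d\|_\infty$.
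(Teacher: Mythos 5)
Your proof is correct and takes essentially the same route as the paper's: a ``row times column'' product in $\widehat{\cl B}$ whose only nonzero block entry is a strongly convergent telescoping sum of powers of $b_1$ that escapes $\cl B$ because $\cl B \cap c_0 = (0)$. The only cosmetic difference is that you target $1-\delta_1$ with an all-positive symmetric row/column (so no sign trick is needed and the norm bound is $\|d\|_\infty = 1$ rather than the paper's $\sum_k P_k^2 \le 2b_1$), whereas the paper arranges signs so that the $(1,1)$ entry is $\delta_1$ itself; your verification that $1-\delta_1 \notin \cl B$ could also be shortened to the observation that $1 \in \cl B$ and $\delta_1 \in c_0 \setminus \{0\}$.
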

\begin{proof} Since $\cl B$ is injective, $\cl F(\cl B)= \cl B,$ and hence, invoking the above Proposition, $\cl F(\widetilde{\cl B})= \widehat{\cl F(\cl B)}= \widehat{\cl B}.$  Thus, it remains to show that even though $\cl B$ is an injective C*-subalgebra of $B(\ell^2), \widehat{\cl B}$ is not a C*-subalgebra of $B(\ell^2 \otimes \ell^2),$ although by the above results it is an injective operator system.

To this end, let $b_1$ be the element of $\cl B$ that satisfies,
$1=b_1(1) > b_1(n) \ge 1/2,$ for $n \ne 1.$ Set $P_1= \sqrt{b_1}$ and
for $n>1,$ set $P_n= \sqrt{b_1^{n-1}-b_1^n}.$ If we let $A=(A_{i,j}),
B=(B_{i,j})$ be the defined by $A_{1,j}= P_j, A_{i,j}=0, i \ne 1$ and
$B_{1,1}= P_1, B_{i,1}= -P_i, B_{i,j}=0, j \ne 1,$ then since, $\sum_k
P_k^2 \le 2b_1$, we have that $A$ and $B$ define bounded operators and hence are in $\widehat{\cl B}.$

However, $A \cdot B= (C_{i,j})$ where $C_{i,j}= 0,$ unless $i=j=1$ and $C_{1,1}= P_1^2 - \sum_{k=2} P_k^2= \lim_{k \to \infty} b_1^k = \delta_1$ since all convergence is only in the strong operator topology. However, $\delta_1 \notin \cl B$ by construction. Hence, $\widehat{\cl B}$ is not a C*-subalgebra.
\end{proof}

Note that for the above example, $\cl F(\widetilde{\cl B})=
\widetilde{\cl B} =I(\widetilde{\cl B}).$ Thus, although $\cl
F(\widetilde{\cl B})$ is not a C*-subalgebra of $B(\cl H \otimes
\ell^2)$, it is an injective operator system it is a C*-algebra in another product. 

\begin{prob} Is $\cl F(\cl A)$ always completely order isomorphic to a C*-algebra?
\end{prob}

In particular, there is a natural way to identify $\cl F(\cl A)$ completely order isomorphically with an operator subsystem of $I(\cl A)$ and we conjecture that it is a C*-subalgebra of $I(\cl A)$ with this identification.
We make this precise below.

\begin{defn} Let $\cl A$ be a unital C*-algebra and let $\pi: \cl A
  \to B(\cl H)$ be a *-monomorphism. We let $\cl E(\pi)$ denote the set of
  all completely positive maps, $\phi: I(\cl A) \to B(\cl H)$ that
  extend $\pi$ and we set $\cl F_{\pi}= \{  x \in I(\cl A): \phi(x)=
  \psi(x) \text{ for all } \phi, \psi \in \cl E(\pi) \}.$
\end{defn}

\begin{prop} Let $\cl A$ be a unital C*-algebra and let $\pi: \cl A
  \to B(\cl H)$ be a *-monomorphism. If $\phi \in \cl E(\pi)$, then $\phi: \cl F_{\pi} \to \cl F(\pi(\cl A))$ is a complete order isomorphism.
\end{prop}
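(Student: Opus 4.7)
The plan is to combine essentiality of $I(\cl A)$ (which forces any $\phi\in\cl E(\pi)$ to be a complete isometry on $I(\cl A)$) with injectivity of $I(\cl A)$ (which produces a CP left-inverse $\Psi$) to exhibit $\phi|_{\cl F_\pi}$ and $\Psi|_{\cl F(\pi(\cl A))}$ as mutually inverse CP bijections between $\cl F_\pi$ and $\cl F(\pi(\cl A)).$

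First I would observe that any $\phi\in\cl E(\pi)$ is automatically a complete isometry on all of $I(\cl A)$: its restriction to $\cl A$ is the $*$-monomorphism $\pi,$ which is completely isometric, and essentiality of $I(\cl A)$ as an extension of $\cl A$ then forces $\phi$ itself to be completely isometric. Since $\phi$ is unital and completely positive, the estimate $\|y-\|y\|I\|\le\|y\|$ for $0\le y\le\|y\|I$ transfers via isometry back to the source, so $\phi$ is a complete order embedding of $I(\cl A)$ into $B(\cl H).$ Second, I would produce a CP ``inverse'' $\Psi:B(\cl H)\to I(\cl A)$: injectivity of $I(\cl A)$ lets me extend the $*$-isomorphism $\pi^{-1}:\pi(\cl A)\to\cl A\subseteq I(\cl A)$ to a UCP map $\Psi:B(\cl H)\to I(\cl A),$ and rigidity of the injective envelope then forces $\Psi\circ\phi=\mathrm{id}_{I(\cl A)},$ since $\Psi\circ\phi$ is a UCP $\cl A$-map on $I(\cl A).$

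With $\phi$ and $\Psi$ in hand, the remaining verification splits into three parts. For $x\in\cl F_{\pi}$ and any $\pi(\cl A)$-map $\gamma:B(\cl H)\to B(\cl H),$ the composition $\gamma\circ\phi$ belongs to $\cl E(\pi),$ so $\gamma(\phi(x))=\phi(x),$ giving $\phi(\cl F_{\pi})\subseteq \cl F(\pi(\cl A)).$ Conversely, given $T\in\cl F(\pi(\cl A)),$ set $x=\Psi(T);$ for every $\psi\in\cl E(\pi)$ the composition $\psi\circ\Psi:B(\cl H)\to B(\cl H)$ is a $\pi(\cl A)$-map and hence fixes $T,$ yielding $\psi(x)=T.$ All elements of $\cl E(\pi)$ therefore agree on $x,$ so $x\in\cl F_{\pi},$ and specializing $\psi=\phi$ gives $\phi(x)=T.$ Finally, injectivity of $\phi$ on $I(\cl A)$ combined with the two containments above makes $\phi|_{\cl F_{\pi}}$ a bijection onto $\cl F(\pi(\cl A))$ whose inverse is $\Psi|_{\cl F(\pi(\cl A))},$ which is CP as a restriction of a CP map; the same argument at each matrix level yields that the inverse is completely positive.

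The delicate step I expect to be the surjectivity argument: one must test the candidate $x=\Psi(T)$ against an arbitrary $\psi\in\cl E(\pi)$ rather than just the fixed $\phi,$ so that the conclusion records agreement of \emph{all} extensions in $\cl E(\pi)$ at $x.$ Once one notices that $\psi\circ\Psi$ is a $\pi(\cl A)$-map for every $\psi\in\cl E(\pi),$ the identities $\Psi\circ\phi=\mathrm{id}_{I(\cl A)}$ and $\gamma\circ\phi\in\cl E(\pi)$ handle the rest.
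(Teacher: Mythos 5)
Your proof is correct and follows the route the paper intends: the paper's own proof is a one-sentence assertion that $E(T)=T$ for all $E$ if and only if $T=\phi(x)$ for a unique $x\in\cl F_{\pi}$, and your argument (essentiality gives that $\phi$ is a unital complete isometry, injectivity and rigidity give the CP left-inverse $\Psi$ with $\Psi\circ\phi=\mathrm{id}_{I(\cl A)}$, and the two compositions $\gamma\circ\phi$ and $\psi\circ\Psi$ give the two containments) is exactly the detail being suppressed. No gaps.
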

\begin{proof} Clearly, $E(T)=T$ for every $E \in \cl E(\pi(\cl A))$ if and only if $T=\phi(x)$ for a unique element of $\cl F_{\pi}.$
\end{proof}

\begin{prob} Is $\cl F_{\pi} \subseteq I(\cl A),$ always a C*-subalgebra?
\end{prob}

We close this section by examining the above construction in the abelian case.
Recall that an abelian C*-algebra is injective if and only if it is an AW*-algebra \cite{H1} and that, by definition, in an AW*-algebra every set of self-adjoint elements with an upper bound(respectively, lower bound) has a supremum(respectively, infimum). Also, every abelian W*-algebra is injective.
Given any subset $S$ of a C*-algebra, we let $S_h= \{ x \in S: x=x^* \}.$
 
Given an abelian, injective C*-algebra $\cl C$ and a C*-subalgebra
$\cl A$, we follow Hamana's notation \cite{H4} and given $x \in \cl C_h$ set
$$(-\infty, x]_{\cl A} = \{ a \in \cl A_h : a \le x \}$$
and
$$[x, +\infty)_{\cl A} = \{ a \in \cl A_h: x \le a \}.$$
Moreover, we shall set $$\ell_{\cl A}(x) = \sup (- \infty, x]_{\cl A}$$ and
$$u_{\cl A}(x) = \inf [x, + \infty)_{\cl A}.$$

\begin{thm} Let $\cl A \subset \cl B(\cl H)$ be a unital abelian C*-subalgebra with $\cl A^{\prime} = \cl A^{\prime \prime}.$  Then $\cl F(\cl A)_h = \cl F_{\cl A^{\prime \prime}}(\cl A)_h = \{ x \in \cl A^{\prime \prime}_h: \ell_{\cl A}(x) = u_{\cl A}(x) \}$, where $\ell_{\cl A}(x)$ and $u_{\cl A}(x)$ are computed in $\cl A^{\prime \prime}.$
\end{thm}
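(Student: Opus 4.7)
The plan is to establish the two equalities separately. The inclusion $\cl F(\cl A)_h \subseteq \cl F_{\cl A''}(\cl A)_h$ is immediate from Proposition 4.4, so only its reverse requires work. Given $T \in \cl F_{\cl A''}(\cl A)_h \subseteq \cl A''$ and an arbitrary $\cl A$-map $\psi\colon B(\cl H) \to B(\cl H)$, the crucial point is that $\psi$ sends $\cl A''$ into itself. Since $\psi$ is unital and completely contractive (hence unital completely positive) and fixes the C*-algebra $\cl A$, Choi's multiplicative-domain theorem places $\cl A$ in the multiplicative domain, so $\psi$ is an $\cl A$-bimodule map; then for $T \in \cl A''$ and $A \in \cl A$, the identity $A\psi(T) = \psi(AT) = \psi(TA) = \psi(T)A$ shows $\psi(T) \in \cl A'$. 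The MASA hypothesis $\cl A' = \cl A''$ now upgrades this to $\psi(T) \in \cl A''$. Restricted to $\cl A''$, $\psi$ becomes an $\cl A$-map on $\cl A''$, and the hypothesis on $T$ gives $\psi(T)=T$, completing the first equality.

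For the second equality, the direction $\supseteq$ follows directly from positivity: any $\cl A$-map $\phi\colon \cl A'' \to \cl A''$ is unital completely positive, so $a \le x$ with $a \in \cl A_h$ yields $a = \phi(a) \le \phi(x)$, and taking the sup gives $\ell_{\cl A}(x) \le \phi(x)$; symmetrically $\phi(x) \le u_{\cl A}(x)$. If $\ell_{\cl A}(x) = u_{\cl A}(x)$, both equal $x$, forcing $\phi(x)=x$.

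For the direction $\subseteq$, I would prove the contrapositive by constructing an $\cl A$-map that moves $x$ whenever $\ell_{\cl A}(x) \ne u_{\cl A}(x)$. In that case the inequalities $\ell_{\cl A}(x) \le x \le u_{\cl A}(x)$ with $\ell \ne u$ force at least one of the bounds to differ from $x$; say $y := \ell_{\cl A}(x) \ne x$, which also implies $x \notin \cl A$. On the operator subsystem $\cl A + \bb C \cdot x \subset \cl A''$, define $\phi_0(a + \lambda x) = a + \lambda y$; this is well-defined because the sum is direct, and a short case analysis on $\lambda \in \bb R$ using the defining inequalities of $\ell_{\cl A}(x)$ and $u_{\cl A}(x)$ shows $\phi_0$ is positive. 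Because the codomain $\cl A''$ is abelian, every positive map into it is automatically completely positive (each character evaluation $\chi \circ \phi_0$ is a positive functional, hence completely positive on any operator system). Since $\cl A''$ is an abelian von Neumann algebra and therefore injective, Arveson's extension theorem yields a unital completely positive extension $\phi\colon \cl A'' \to \cl A''$ that fixes $\cl A$ and satisfies $\phi(x) = y \ne x$, so $x \notin \cl F_{\cl A''}(\cl A)_h$.

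The main obstacle is the obstruction construction in the last paragraph, and specifically the positivity check for $\phi_0$; the abelian target turns positivity into complete positivity for free, and the injectivity of $\cl A''$ makes the extension routine. The most delicate conceptual point is the multiplicative-domain argument in the first equality, which depends essentially on the MASA hypothesis $\cl A' = \cl A''$: without it, $\psi(T)$ would lie only in $\cl A'$, and the reduction of $\psi$ to an $\cl A$-map on $\cl A''$ would collapse.
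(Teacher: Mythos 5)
Your proof is correct and uses the same three key ingredients as the paper: a projection (or Proposition~4.4) to place $\cl F(\cl A)$ inside $\cl A''$, the commutant-stability argument $\psi(\cl A'')\subseteq \cl A'=\cl A''$ via the multiplicative domain, and the separation map $a+\lambda x\mapsto a+\lambda y$ extended by injectivity of the abelian von~Neumann algebra $\cl A''$. The only difference is organizational: the paper closes a single cycle of inclusions $\cl F(\cl A)_h\subseteq \cl F_{\cl A''}(\cl A)_h\subseteq\{\ell_{\cl A}(x)=u_{\cl A}(x)\}\subseteq\cl F(\cl A)_h$ (deploying the commutant argument in the last link), whereas you prove the two equalities separately, which costs you one extra (easy) positivity argument but is equally valid.
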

\begin{proof}
Let $E: B(\cl H) \to \cl A^{\prime \prime}$ be a completely positive projection. Since $E$ fixes $\cl A$, if $x \in \cl F(\cl A),$ then $E(x)=x$ and so $x \in \cl A^{\prime \prime}.$ Thus, $\cl F(\cl A) = \cl F(\cl A) \cap \cl A^{\prime \prime} \subseteq \cl F_{\cl A^{\prime \prime}}(\cl A).$

Now, if $x \in \cl A^{\prime \prime}_h$ and $\ell_{\cl A}(x) \ne u_{\cl A}(x),$ then there exists, $y \in \cl A^{\prime \prime}_h$ such that $\ell_{\cl A}(x) \le y \le u_{\cl A}(x)$ with $y \ne x.$

It is easily checked that for any $a \in \cl A$ and $\lambda \in \bb C,$ we have that $a + \lambda x \ge 0$ implies that $a+ \lambda y \ge 0.$ Hence, the map $\phi(a+ \lambda x) = a + \lambda y$ is positive and, since we are in an abelian situation, completely positive. Thus, we may extend $\phi$ to a map, $\psi: \cl A^{\prime \prime} \to \cl A^{\prime \prime}.$  Since $\psi$ fixes $\cl A$ and $\psi(x) =y,$ we have that $x \notin \cl F_{\cl A^{\prime \prime}}(\cl A).$  Thus, we have shown that $\cl F(\cl A)_h \subseteq \cl F_{\cl A^{\prime \prime}}(\cl A) \subseteq \{ x \in \cl A^{\prime \prime}_h: \ell_{\cl A}(x) = u_{\cl A}(x) \}.$

Now assume that $x \in \cl A^{\prime \prime}_h$ and $\ell_{\cl A}(x) = u_{\cl A}(x).$ 
If $\phi:B(\cl H) \to B(\cl H)$ is any completely positive map such that $\phi(a)=a \forall a \in \cl A,$ then $\phi(\cl A^{\prime}) \subseteq \cl A^{\prime},$ since for $y \in \cl A^{\prime}, a\phi(y) = \phi(ay) = \phi(ya) = \phi(y)a.$ Thus, if $\phi(x) =y$ then $y \in \cl A^{\prime \prime}$ and hence, $\ell_{\cl A}(x) \le y \le u_{\cl A}(x)$ from which it follows that $y=x.$ Hence, $\{ x \in \cl A^{\prime \prime}_h: \ell_{\cl A}(x) = u_{\cl A}(x) \} \subseteq \cl F(\cl A).$
\end{proof}

We now apply the above result to several concrete cases.
 To this end let $C([0,1])$ denote the continuous functions on [0,1], let $L^{\infty}([0,1])$ denote the set of equivalence classes of essentially bounded Lebesgue measurable functions and regard, $C([0,1]) \subseteq L^{\infty}([0,1])$ by identifying a continuous function with its equivalence class.
Since $L^{\infty}([0,1])$ is an injective vonNeumann algebra, we have copies of $I(C([0,1]))$ embedded completely order isomorphically inside $L^{\infty}([0,1])$ and corresponding minimal (completely) positive projections onto these copies of the injective envelope. As before, we will have that the intersections of the ranges of all these projections is exactly the set of all elements of $L^{\infty}([0,1])$ that are fixed by every positive map that fixes $C([0,1]).$ We let $\cl F(C([0,1]))$ denote this space.

To understand this example, it helps to notice some facts about sup's and inf's in $L^{\infty}([0,1]).$ Let $[g] \in L^{\infty}([0,1]),$ recall that the set of points $x$'s such that
$$\lim_{h \to 0^+} \frac{1}{2h} \int_{x-h}^{x+h} g(t) dm(t)$$
exists, is independent of the particular choice of function from the equivalence class of $g$ and is a set of full measure. These points are called the {\em Lebesgue points} of $g.$ We let $E_g$ denote the set of Lebesgue points of $g$ and we let $\tilde{g}$ be the function whose domain is $E_g$ and which is equal to this limit at each Lebesgue point.

Given $[g] \in L^{\infty}([0,1])$, we define
$$g_l(t) = \sup \{ f(t): f \in C([0,1]), f(x) \le \tilde{g}(x) \forall x \in E_g \}$$
and
$$g_u(t) = \inf \{ f(t): f \in C([0,1]), g(x) \le f(x) \forall x \in E_g \}.$$

Note that $g_l$ is lower semicontinuous, $g_u$ is upper semicontinuous and $g_l(x) \le \tilde{g}(x) \le g_u(x), \forall x \in E_g.$ We should also note that these functions are not the usual upper and lower envelopes of $g$ that one encounters in Riemann integration. The usual lower and upper envelopes, which we will denote $g^l$ and $g^u$ are defined as above with $g$ in the place of $\tilde{g}$ and the inequalities required to hold at all points. For example, if $g$ is the characteristic function of the rationals, then $g^l$ is constantly 0 while $g^u$ is constantly 1, but $E_g=[0,1]$ and $g_l(x)= \tilde{g}(x) = g_u(x)=0.$
Note that for $x \in E_g,$ we have that $f^l(x) \le f_l(x) \le \tilde{g}(x) \le g_u(x) \le g^u(x).$

\begin{prop} Let $[g] \in L^{\infty}([0,1])$, then in the lattice of $L^{\infty}([0,1])$ we have that
$$\ell_{C([0,1])}([g]) = \sup \{ [f]: f \in C([0,1]), [f] \le [g] \} = [g_l]$$
and
$$u_{C([0,1])}([g]) = \inf \{ [f]: f \in C([0,1]), [g] \le [f] \} = [g_u].$$
\end{prop}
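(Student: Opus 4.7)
The plan is to reduce to the supremum formula via the substitution $g \mapsto -g$ (which exchanges $\ell$ and $u$ up to sign and likewise $g_l$ and $g_u$), then to translate the inequality $[f] \le [g]$ for continuous $f$ into a pointwise statement on Lebesgue points of $g,$ and finally to establish the least-upper-bound property by approximating $g_l$ from below by a countable increasing sequence in $C([0,1]).$

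The key preliminary observation is: for $f \in C([0,1]),$ the relation $[f] \le [g]$ in $L^\infty([0,1])$ is equivalent to $f(x) \le \tilde{g}(x)$ for all $x \in E_g.$ The forward direction uses the continuity of $f$: at a Lebesgue point $x$ we have $f(x) = \lim_{h \to 0^+} \tfrac{1}{2h}\int_{x-h}^{x+h} f\, dm \le \lim_{h \to 0^+} \tfrac{1}{2h}\int_{x-h}^{x+h} g\, dm = \tilde{g}(x).$ The reverse direction uses that $E_g$ has full measure and $\tilde{g}$ represents $[g]$ there. Consequently, the family appearing in the definition of $g_l$ coincides with $\cl G := \{f \in C([0,1]) : [f] \le [g]\}.$

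I would then record the basic properties of $g_l.$ Since $|\tilde{g}| \le \|g\|_\infty$ on $E_g$ and $E_g$ is dense, each $f \in \cl G$ satisfies $|f| \le \|g\|_\infty$ everywhere, so $|g_l| \le \|g\|_\infty.$ As a pointwise supremum of continuous functions, $g_l$ is lower semicontinuous, hence Borel measurable, so $[g_l] \in L^\infty([0,1]).$ The inequality $f \le g_l$ for every $f \in \cl G$ shows that $[g_l]$ is an upper bound in $L^\infty([0,1])$ for $\{[f] : f \in \cl G\},$ and $g_l \le \tilde{g}$ on $E_g$ yields $[g_l] \le [g].$

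The main step, and the main obstacle, is showing that $[g_l]$ is the \emph{least} upper bound; the difficulty is that $\cl G$ is uncountable, while suprema in $L^\infty$ are only automatically well-behaved for countable increasing families. I would handle this via the Lipschitz regularization $\phi_n(t) := \inf_{s \in [0,1]}\bigl(g_l(s) + n|t-s|\bigr),$ which for a bounded lsc function on $[0,1]$ produces $n$-Lipschitz $\phi_n$ with $\phi_n \le g_l$ and $\phi_n(t) \uparrow g_l(t)$ for every $t.$ Since $g_l \le \tilde{g}$ on $E_g,$ each $\phi_n$ lies in $\cl G.$ If $[h]$ is any upper bound in $L^\infty$ for $\{[f] : f \in \cl G\},$ then for each $n$ one has $\phi_n \le h$ off a null set $N_n;$ taking $N = \bigcup_n N_n$ (still null) and passing to the pointwise limit gives $g_l \le h$ off $N,$ i.e.\ $[g_l] \le [h].$ This establishes $\ell_{C([0,1])}([g]) = [g_l],$ and the infimum formula follows from the opening symmetry.
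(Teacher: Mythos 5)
Your proof is correct, but it handles the crux of the matter --- identifying the lattice supremum of the uncountable family $\cl G = \{f \in C([0,1]) : [f] \le [g]\}$ --- by a different device than the paper. The paper starts from the abstract supremum $[h]$ (whose existence in $L^{\infty}$ it takes for granted), observes as you do that $[g_l]$ is an upper bound so $[h] \le [g_l]$, and then for the reverse inequality passes to the Lebesgue-point representative of $h$ itself: from $f \le h$ a.e.\ and continuity of $f$ it gets $f(x) \le \tilde{h}(x)$ for every $x \in E_h$, and since this is now a genuine pointwise inequality it can take the supremum over all $f \in \cl G$ at once to conclude $g_l \le \tilde{h}$ on a set of full measure, i.e.\ $[g_l] \le [h]$. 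You instead tame the uncountable family by inf-convolution, producing a countable increasing sequence $\phi_n \in \cl G$ with $\phi_n \uparrow g_l$ pointwise, after which the comparison with an arbitrary upper bound $[h]$ needs only countably many null sets. Both arguments are sound; the paper's is shorter and exploits the same Lebesgue-point mechanism it already set up for $g$, while yours is self-contained in that it exhibits $[g_l]$ as the least upper bound directly rather than presupposing that the supremum exists in the lattice. One small slip: your claim that each $f \in \cl G$ satisfies $|f| \le \|g\|_{\infty}$ is not right (only the upper bound $f \le \|g\|_{\infty}$ follows; $f$ may be very negative), but the conclusion $|g_l| \le \|g\|_{\infty}$ still holds because the constant $-\|g\|_{\infty}$ lies in $\cl G$, and this is also what guarantees that $g_l$ is bounded below so that your $\phi_n$ are finite.
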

\begin{proof}
We only prove the first equality. Let $[h]$ denote the supremum. If $f \in C([0,1])$ and $[f] \le [g]$, then $[f] \le [h]$ and so $f \le g, a.e.$ m and $f \le h, a.e.$ m. Hence, $f(x) \le \tilde{g}(x) \forall x \in E_g$ and so $f(x) \le g_l(x) \forall x \in E_g.$ Since, this set is full measure, $[f] \le [g_l]$ and so $[h] \le [g_l].$ But, we also have that $f(x) \le \tilde{h}(x) \forall x \in E_h.$ Hence, $g_l(x) \le \tilde{h}(x) \forall x \in E_h \cap E_g$ and so, $[g_l] \le [h],$ since $E_h \cap E_g$ is a set of full measure.
\end{proof}

Note that unlike the case of the continuous functions, it is possible for two Riemann integrable functions to be equal almost everywhere without being equal. Thus, the inclusion of the Riemann integrable functions into $L^{\infty}([0,1])$ is not a monomorphism. Moreover, if a function is equal almost everywhere to a Riemann integrable function, it need not be Riemann integrable.

\begin{thm} For $C([0,1]) \subset L^{\infty}([0,1])$, the set $\cl F_{L^{\infty}([0,1])}(C([0,1]))$ is equal to the set of equivalence classes of Riemann integrable functions. That is $[g] \in \cl F(C([0,1]))$ if and only if $f=g, a.e.$ for some Riemann integrable function $f.$
\end{thm}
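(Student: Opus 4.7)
The plan is to combine the two preceding results (the general characterization of $\cl F_{\cl A''}(\cl A)_h$ in terms of equality of the $\cl A$-sup and $\cl A$-inf, and the identification of these with $[g_l]$ and $[g_u]$) with the classical Lebesgue criterion for Riemann integrability. Writing $\cl A = C([0,1])$ and $\cl A'' = L^\infty([0,1]),$ the earlier theorem shows $[g] \in \cl F(C([0,1]))_h$ iff $\ell_{\cl A}([g])=u_{\cl A}([g]),$ and the proposition just before this theorem rewrites this as $[g_l]=[g_u],$ i.e.\ $g_l=g_u$ almost everywhere (both functions are already everywhere-defined). So the entire theorem reduces to showing: \emph{an equivalence class $[g]\in L^\infty([0,1])$ satisfies $g_l=g_u$ a.e.\ if and only if $g$ is a.e.\ equal to a Riemann integrable function.}

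For the forward direction, I would set $f:=g_l,$ which is defined on all of $[0,1]$ and bounded. First, $g_l\le g_u$ pointwise on $[0,1]:$ any continuous $h_1$ bounded above by $\tilde g$ on the full-measure set $E_g$ and any continuous $h_2$ bounding $\tilde g$ from above on $E_g$ satisfy $h_1\le h_2$ on $E_g,$ hence on $[0,1]$ by density and continuity. Now at any $x$ with $g_l(x)=g_u(x)=c$ and any $\varepsilon>0,$ pick witnesses $h_1,h_2\in C([0,1])$ with $h_1(x)>c-\varepsilon,$ $h_2(x)<c+\varepsilon.$ On a neighborhood $U$ of $x$ we have $h_1>c-2\varepsilon$ and $h_2<c+2\varepsilon,$ whence for $y\in U$
\[
c-2\varepsilon<h_1(y)\le g_l(y)=f(y)\le g_u(y)\le h_2(y)<c+2\varepsilon.
\]
Thus $f$ is continuous at every such $x,$ so continuous off the null set $\{g_l<g_u\},$ and Lebesgue's criterion gives that $f$ is Riemann integrable. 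Moreover on $E_g\cap\{g_l=g_u\}$ (a set of full measure) we have $g_l\le\tilde g\le g_u,$ forcing $f=\tilde g$ there; since $\tilde g=g$ a.e.\ by the Lebesgue differentiation theorem, $f=g$ a.e.

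For the reverse direction, assume $g$ is already a Riemann integrable representative of its class, so by the Lebesgue criterion $g$ is continuous at almost every $x.$ I will show $g_l(x)=g_u(x)=g(x)$ at every such $x.$ Fix a continuity point $x_0$ with $g(x_0)=c$ and $\varepsilon>0;$ choose $\delta>0$ so that $|g(y)-c|<\varepsilon$ on $[x_0-\delta,x_0+\delta]$ and let $M=\|g\|_\infty.$ Construct a continuous tent function $h_1$ equal to $c-\varepsilon$ on $[x_0-\delta/2,x_0+\delta/2],$ equal to $-M$ off $(x_0-\delta,x_0+\delta),$ and linear on the transition intervals; then $h_1\le g$ pointwise, in particular $h_1\le\tilde g$ on $E_g,$ so $g_l(x_0)\ge h_1(x_0)=c-\varepsilon.$ The symmetric construction produces $h_2$ with $g_u(x_0)\le c+\varepsilon.$ Since $g_l\le g_u$ and $\varepsilon$ is arbitrary, $g_l(x_0)=g_u(x_0)=c,$ and in particular $g_l=g_u$ almost everywhere.

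The main technical obstacle is keeping the distinction between a pointwise representative $g,$ its a.e.\ limit-of-averages $\tilde g,$ and the envelopes $g_l,g_u$ straight, since the definitions of $g_l$ and $g_u$ test against $\tilde g$ on $E_g$ rather than against $g$ everywhere; this is precisely why $g_l$ and $g_u$ are insensitive to the ``usual'' upper and lower Riemann envelopes $g^u,g^l$ (as highlighted by the rationals example in the text), and why the equivalence is with Riemann integrability up to a null set rather than Riemann integrability of every representative.
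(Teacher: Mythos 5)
Your proof is correct and follows essentially the same route as the paper's: reduce via the two preceding results to the condition $[g_l]=[g_u]$, take $f=g_l$ in the forward direction (Riemann integrable because the lsc/usc sandwich forces continuity at every point where $g_l=g_u$, hence a.e.), and in the converse verify $g_l=g_u=g$ at the continuity points of a Riemann integrable representative. The only cosmetic difference is that the paper's converse invokes the chain $f^l\le f_l\le \tilde f\le f_u\le f^u$ together with the classical fact that $f^l=f^u$ a.e.\ for Riemann integrable $f$, whereas you re-derive this directly with explicit tent-function witnesses.
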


\begin{proof} By the above theorem, we have that $[g] \in \cl F_{L^{\infty}([0,1])}(C([0,1]))_h$ if and only if $[g_l]=[g]=[g_u].$
Since $g_l(x) \le g_u(x) \forall x \in [0,1]$ and are equal almost everywhere, these functions are both Riemann integrable. Thus, if $[g] \in \cl F_{L^{\infty}([0,1])}(C([0,1]))_h$, then $g = g_l a.e.$ and $g_l$ is Riemann integrable. 

Conversely, if $g=f a.e.$ are real-valued and $f$ is Riemann integrable, then $f^l=f^u a.e.$ and hence from the above inequalities, $f_l=f_u a.e.,$ so that $[g_l]=[f_l]=[f]=[g]=[f_u]=[g_u]$ which implies that $[g] \in \cl F_{L^{\infty}([0,1])}(C([0,1])).$
 
\end{proof}

\begin{cor} Let $\pi:C([0,1]) \to B(L^2([0,1]))$ be the *-monomorphism given by $\pi(f) =M_f,$ where $M_f$ denotes the operator of multiplication by $f$. Then $\cl F( \pi(C([0,1]))) = \{ M_f: f \in \cl R([0,1]) \},$ where $\cl R([0,1])$ denotes the set of Riemann integrable functions and hence is a C*-algebra.
\end{cor}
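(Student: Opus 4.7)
The plan is to apply the preceding theorem on abelian $C^*$-subalgebras with $\cl A' = \cl A''$ to $\cl A = \pi(C([0,1]))$, and then translate the condition $\ell_{\cl A}(x) = u_{\cl A}(x)$ into Riemann integrability using the earlier Proposition and Theorem on $\cl F_{L^\infty([0,1])}(C([0,1]))$. First I would observe that multiplication by continuous functions forms a maximal abelian self-adjoint subalgebra of $B(L^2([0,1]))$, so that $\pi(C([0,1]))' = \pi(C([0,1]))'' = \{M_g : g \in L^\infty([0,1])\}$. Moreover, the map $[g] \mapsto M_g$ is a unital $*$-isomorphism of $L^\infty([0,1])$ onto $\pi(C([0,1]))''$ which is also an order isomorphism, since $M_g \ge 0$ if and only if $g \ge 0$ almost everywhere. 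The hypothesis $\cl A' = \cl A''$ of the preceding theorem is therefore satisfied.

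Applying that theorem gives $\cl F(\pi(C([0,1])))_h = \{M_g : g \in L^\infty([0,1])_h, \; \ell_{\pi(C([0,1]))}(M_g) = u_{\pi(C([0,1]))}(M_g)\}$, with the sup and inf taken in $\pi(C([0,1]))''$. Under the order isomorphism above, these transport to the sup and inf of $[g]$ relative to $C([0,1])$ inside $L^\infty([0,1])$, and the earlier Proposition identifies them as $[g_l]$ and $[g_u]$, respectively. Hence the condition collapses to $[g_l] = [g_u]$, which by the preceding Theorem is precisely the condition that $[g]$ contain a Riemann integrable representative.

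For a general, not necessarily self-adjoint, element $T \in \cl F(\pi(C([0,1])))$, I would decompose $T = T_1 + i T_2$ into self-adjoint parts. Since every $\cl A$-map is unital and completely contractive, hence completely positive and $*$-preserving, $\cl F(\pi(C([0,1])))$ is closed under adjoint, so $T_1, T_2 \in \cl F(\pi(C([0,1])))_h$. By the self-adjoint case, each equals $M_{f_j}$ for some self-adjoint $f_j \in \cl R([0,1])$, giving $T = M_{f_1 + i f_2}$ with $f_1 + i f_2 \in \cl R([0,1])$. The reverse containment is immediate since the real and imaginary parts of a Riemann integrable function are themselves Riemann integrable. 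Finally, because $\cl R([0,1])$ is closed under sum, product, conjugation, and uniform limits, $\{M_f : f \in \cl R([0,1])\}$ is a norm-closed $*$-subalgebra of $B(L^2([0,1]))$, hence a $C^*$-algebra. The main delicate point is establishing the order isomorphism in the first step, but this reduces to the standard fact that $M_g \ge 0$ iff $g \ge 0$ a.e.; once that is in place, the rest is direct assembly of the earlier results.
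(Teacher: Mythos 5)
Your proof is correct and follows essentially the same route as the paper: identify $\pi(C([0,1]))' = \pi(C([0,1]))'' = \{M_g : g\in L^\infty([0,1])\}$, apply the preceding theorem on abelian subalgebras with $\cl A'=\cl A''$ together with the Riemann-integrability characterization of $\cl F_{L^\infty([0,1])}(C([0,1]))$, and pass from the self-adjoint case to the general case. The paper compresses all of this into "the result follows"; your version merely spells out the order isomorphism, the self-adjointness of $\cl F$, and the C*-algebra claim explicitly.
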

\begin{proof} We have that $\pi(C([0,1])^{\prime}= \pi(C([0,1]))^{\prime \prime} = \{ M_f: f
  \in L^{\infty}([0,1]) \} \equiv L^{\infty}([0,1])$ and the result follows.

The last statement follows from the fact that the Riemann integrable functions are a C*-algebra and that $g \to M_g$ is a *-homomorphism.
\end{proof}

We now consider a discrete case. Let $X$ be a compact, Hausdorff space, let $\ell^{\infty}(X)$ denote the bounded functions on $X$ and let $C(X),$  $LSC(X),$ and $USC(X)$ denote the continuous, lower semicontinuous and upper semicontinuous functions on $X$, respectively. Also, let $\{ x_i \}_{i \in I}$ be a dense set in $X$. Recall that a function is lower semicontinuous(respectively, upper semicontinuous) if and only if it is the supremum(respectively, infimum) of the continuous functions that are less(repsectively, greater) than it.

\begin{prop} Let $\pi:\ell^{\infty}(X) \to \ell^{\infty}(I)$ be defined by $\pi(f)(i)= f(x_i).$ Then
$$\cl F_{\ell^{\infty}(I)}(\pi(C(X))) = \pi(LSC(X)) \cap \pi(USC(X)).$$
\end{prop}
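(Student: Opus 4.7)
The plan is to adapt the proof of Theorem~4.14, with $\pi(C(X))$ playing the role of $\cl A$ and the abelian injective C*-algebra $\ell^{\infty}(I)$ playing the role of the order-complete ambient space. The key reduction is a lattice computation: the supremum $\ell_{\pi(C(X))}(g)$ and infimum $u_{\pi(C(X))}(g)$, computed in $\ell^{\infty}(I)$, are precisely $\pi(\tilde g)$ and $\pi(\hat g)$, where
$$\tilde g(x) = \sup\{f(x) : f \in C(X)_h,\ f(x_j) \le g(j)\ \forall j \in I\},$$
$$\hat g(x) = \inf\{f(x) : f \in C(X)_h,\ f(x_j) \ge g(j)\ \forall j \in I\},$$
lying in $LSC(X)$ and $USC(X)$ respectively. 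This identification uses density of $\{x_i\}$: two continuous functions agreeing on a dense set agree everywhere, so the sup of $f(x_i)$ over admissible $f$'s coincides with $\tilde g(x_i)$.

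First, I would reduce to the self-adjoint case, since any $\pi(C(X))$-map is unital and completely positive, hence $*$-preserving, and $\pi(LSC(X)) \cap \pi(USC(X))$ respects the real/imaginary decomposition. For the easy inclusion $\supseteq$, suppose $g = \pi(h_1) = \pi(h_2)$ with $h_1 \in LSC(X)$ and $h_2 \in USC(X)$. Any $\pi(C(X))$-map $\phi$ on $\ell^{\infty}(I)$ is positive, so for $f \in C(X)_h$ with $f \le h_1$ pointwise, $\pi(f) \le g$ gives $\pi(f) = \phi(\pi(f)) \le \phi(g)$; evaluating at $i$ and taking the supremum over such $f$'s gives $h_1(x_i) = g(i) \le \phi(g)(i)$. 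The symmetric argument with $h_2$ gives $\phi(g) \le g$, hence $\phi(g) = g$, so $g \in \cl F_{\ell^{\infty}(I)}(\pi(C(X)))$.

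For the reverse inclusion $\subseteq$, I argue the contrapositive: if $g \ne \pi(\tilde g)$, construct a $\pi(C(X))$-map on $\ell^{\infty}(I)$ that does not fix $g$. On the operator system $\pi(C(X)) + \bb{C} g$, define $\phi_0(a + \lambda g) = a + \lambda \pi(\tilde g)$. The crucial point is verifying that $\pi(f) + \lambda g \ge 0$ in $\ell^{\infty}(I)$ forces $\pi(f) + \lambda \pi(\tilde g) \ge 0$ on all of $X$. For $\lambda > 0$ this says $-f/\lambda$ lies in the sup-ing set defining $\tilde g$, which is immediate from the hypothesis. For $\lambda < 0$ one uses that any continuous $h$ in the sup-ing set satisfies $h(x_j) \le -f(x_j)/\lambda$ on the dense set $\{x_j\}$, hence $h \le -f/\lambda$ everywhere on $X$ by continuity, and taking supremum yields $\tilde g \le -f/\lambda$ pointwise on $X$, i.e.\ $f + \lambda \tilde g \ge 0$. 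Well-definedness is automatic when $g \notin \pi(C(X))$; the alternative case $g \in \pi(C(X))$ forces $\pi(\tilde g) = g$ directly. Since $\ell^{\infty}(I)$ is abelian, positivity upgrades to complete positivity; injectivity of $\ell^{\infty}(I)$ then extends $\phi_0$ to a unital completely positive map $\phi: \ell^{\infty}(I) \to \ell^{\infty}(I)$ fixing $\pi(C(X))$ but sending $g$ to $\pi(\tilde g) \ne g$. A symmetric argument handles $g \ne \pi(\hat g)$, completing the proof.

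The main obstacle is the positivity check in the construction of $\phi_0$, especially in the $\lambda < 0$ case. It is precisely here that density of $\{x_i\}$ in $X$ is used essentially: it converts an inequality on the index set $I$ into one on all of $X$ via continuity, which is what makes the envelopes $\tilde g, \hat g$ behave like the $g_l, g_u$ of Proposition~4.16 and lets the lattice identification in $\ell^{\infty}(I)$ go through.
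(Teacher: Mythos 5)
Your proof is correct and follows essentially the same route as the paper: both reduce the problem to the order-determination characterization of $\cl F$ and then identify the lattice supremum and infimum of $g$ over $\pi(C(X))$ inside $\ell^{\infty}(I)$ with $\pi$ applied to the lower and upper semicontinuous envelopes on $X$, using density of $\{x_i\}$. The only difference is that you inline the relevant argument from Theorem~4.14 (constructing the positive map $a+\lambda g \mapsto a+\lambda\pi(\tilde g)$ and extending by injectivity of $\ell^{\infty}(I)$), whereas the paper simply invokes that characterization before carrying out the same envelope computation.
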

\begin{proof}
We have that $h \in \cl F_{\ell^{\infty}(I)}(\pi(C(X)))$ if and only if $h= \sup \{ \pi(f): f \in C(X), \pi(f) \le h \} = \inf \{ \pi(f): f \in C(X), h \le \pi(f) \}.$

Let $g_l(x) = \sup \{ f(x): f \in C(X), \pi(f) \le h \}$ and $g_u(x) = \inf \{ f(x): f \in C(X), \pi(f) \le h \}.$ Then $g_l$ is lower semicontinuous and $g_u$ is upper semicontinuous and $h = \pi(g_l) = \pi(g_u).$
Hence, $\cl F_{\ell^{\infty}(I)}(\pi(C(X))) \subseteq \pi(LSC(X)) \cap \pi(USC(X)).$

Conversely, if $h \in \pi(LSC(X)) \cap \pi(USC(X)),$ say $h = \pi(g_l)=\pi(g_u).$ Note that for $f \in C(X),$ we have that  $\pi(f) \le h,$ if and only if $f \le g_l.$
Hence, $\sup \{ \pi(f): f \in C(X), \pi(f) \le h \} = \sup \{ \pi(f): f \in C(X), f \le g_l \} = \pi(g_l) = h.$ similarly, $h = \inf \{ \pi(f): f \in C(X), h \le \pi(f) \}$ and the result follows.
\end{proof}

\begin{cor} Let $X$ be a compact, Hausdorff space, let $\{ x_i \}_{i \in I}$ be a dense set of distinct points in $X$, let $\{ e_i \}$ denote the canonical orthonormal basis for $\ell^2(I),$ and let $\pi: \ell^{\infty}(X) \to B(\ell^2(I))$ be the diagonal representation defined by $\pi(f)e_i= f(x_i)e_i \forall i.$  Then $\cl F(\pi(C(X))) = \pi(LSC(X)) \cap \pi(USC(X)).$
\end{cor}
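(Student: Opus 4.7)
The plan is to reduce the corollary to the immediately preceding proposition via the spatial $*$-isomorphism $\tilde{\pi}:\ell^{\infty}(I)\to\cl D$, where $\cl D\subseteq B(\ell^{2}(I))$ is the masa of diagonal operators, after first putting $\pi(C(X))$ in a position to apply the earlier theorem that characterizes $\cl F(\cl A)_h$ in abelian situations with $\cl A^{\prime}=\cl A^{\prime\prime}$.

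First I would verify that $\pi(C(X))^{\prime}=\pi(C(X))^{\prime\prime}$. Computing the commutant by matrix entries, if $T=(T_{ij})$ commutes with every $\pi(f)$, then $T_{ij}(f(x_j)-f(x_i))=0$ for all $f\in C(X)$; since $X$ is compact Hausdorff and the $x_i$ are distinct, $C(X)$ separates them, forcing $T_{ij}=0$ for $i\neq j$. Hence $\pi(C(X))^{\prime}=\cl D$, and because $\cl D$ is a masa one has $\pi(C(X))^{\prime\prime}=\cl D$ as well. Let $\tilde{\pi}:\ell^{\infty}(I)\to\cl D$ be the canonical $*$-isomorphism determined by $\tilde{\pi}(g)e_i=g(i)e_i$, and let $\rho:\ell^{\infty}(X)\to\ell^{\infty}(I)$, $\rho(f)(i)=f(x_i)$, be the restriction map used in the preceding proposition; then $\pi=\tilde{\pi}\circ\rho$.

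Next, I would apply the earlier characterization theorem to conclude that $\cl F(\pi(C(X)))_h$ is the set of $T\in\cl D_h$ with $\ell_{\pi(C(X))}(T)=u_{\pi(C(X))}(T)$, computed inside $\cl D$. Because $\tilde{\pi}$ is an order-preserving $*$-isomorphism of abelian $C^{*}$-algebras, it preserves arbitrary bounded suprema and infima, so writing $T=\tilde{\pi}(h)$ this condition becomes $\ell_{\rho(C(X))}(h)=u_{\rho(C(X))}(h)$ inside $\ell^{\infty}(I)$. The proof of the preceding proposition shows that this exact condition picks out $\rho(LSC(X))\cap\rho(USC(X))$. Since $\tilde{\pi}$ is injective it commutes with intersections, so transporting back through $\tilde{\pi}$ and using $\pi=\tilde{\pi}\circ\rho$ yields $\cl F(\pi(C(X)))_h=\pi(LSC(X))\cap\pi(USC(X))$; the complex case follows by decomposing into real and imaginary parts.

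I expect the only real obstacle to be the commutant computation of the first step; once $\pi(C(X))^{\prime}=\cl D$ is established, the remainder is a routine transfer of the two preceding results through the spatial $*$-isomorphism $\tilde{\pi}$.
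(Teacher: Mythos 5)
Your proposal is correct and follows essentially the same route as the paper: the paper's one-line proof likewise rests on verifying $\pi(C(X))^{\prime}=\pi(C(X))^{\prime\prime}$ (the diagonal masa, identified with $\ell^{\infty}(I)$) and then combining the theorem characterizing $\cl F(\cl A)_h$ via $\ell_{\cl A}=u_{\cl A}$ with the immediately preceding proposition on $\cl F_{\ell^{\infty}(I)}(\pi(C(X)))$. You have simply supplied the commutant computation and the transfer through the spatial isomorphism explicitly, which the paper leaves implicit.
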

\begin{proof} The result follows as above, since $\pi(C(X))^{\prime \prime} = \pi(C(X))^{\prime}.$
\end{proof}

Consider the case of $X=[0,1],$ with a dense subset given by an enumeration of the rationals, $\{ r_n \}_{n \in \bb N},$ and $\pi: \ell^{\infty}([0,1]) \to B(\ell^2(\bb N)),$ given by the above formula. If we consider an interval with irrational endpoints, $a$ and $b$, then $\pi(\chi_{[a,b]}) = \pi(\chi_{(a,b)})$ and so this projection belongs to $\cl F(\pi(C([0,1]))).$
However, it can be seen that no finite rank diagonal projection or a projection corresponding to an interval with a rational endpoint belongs to $\cl F(\pi(C([0,1])).$

\section*{Acknowledgments}

The author wishes to thank Ken Davidson, Roger Smith and Ivan Todorov,
for various conservations that contributed to this work.

\end{document}